\newtheorem{thm}{Theorem}[section]
\newtheorem{cor}[thm]{Corollary}
\newtheorem{lem}[thm]{Lemma}
\newtheorem{prop}[thm]{Proposition}
\theoremstyle{definition}
\theoremstyle{remark}
\newtheorem{rem}[thm]{Remark}
\numberwithin{equation}{section}
\begin{document}

\title{Maximal subrings of Certain Non-commutative Rings}%
\author{Alborz Azarang}%
\keywords{Maximal subrings, non-commutative rings, center, algebraic, integrality}%
\subjclass[2010]{16U80;16U60;16U70;16N99;16P20;16P40}%

\maketitle

\centerline{Department of Mathematics, Faculty of Mathematical Sciences and Computer,}
\centerline{ Shahid Chamran University
of Ahvaz, Ahvaz-Iran} \centerline{a${}_{-}$azarang@scu.ac.ir}
\centerline{ORCID ID: orcid.org/0000-0001-9598-2411}
\begin{abstract}
The existence of maximal subrings in certain non-commutative rings, especially in rings which are integral over their centers, are investigated. We prove that if a ring $T$ is integral over its center, then either $T$ has a maximal subring or $T/J(T)$ is a commutative Hilbert ring with $|Max(T)|\leq 2^{\aleph_0}$ and $|T/J(T)|\leq 2^{2^{\aleph_0}}$. We observe that if $T$ is an algebraic $K$-algebra over a field $K$, then either $T$ has a maximal subring or $U(T)$ is integral over the prime subring of $T$. If $T$ is a left Artinian ring which is integral over its center, then we prove that either $T$ has a maximal subring or $T$ is countable and is integral over its prime subring. We see that if $T$ is a left Noetherian ring which is integral over its center, then either $T$ has a maximal subring or $|T|\leq 2^{\aleph_0}$. We prove that if $T$ is a domain which is integral over its center $C$ and $J(C)=0$, then either $T$ has a maximal subring or $T$ is an integral domain. If $T$ is a reduced ring which is integral over its center and the center of $T$ is a Hilbert ring, then we show that either $T$ has a maximal subring or $T$ is commutative. We see that if a ring $T$ is integral over its center and $R$ is a subring of $T$ with $J(T)\cap R\subseteq J(R)$, then either $T$ has a maximal subring or $J(R)=J(T)\cap R$ and $U(R)=U(T)\cap R$. Finally, we prove that if $T$ is direct product of an infinite family of rings $\{T_i\}_{i\in I}$ and each $T_i$ is integral over its center, then $T$ has a maximal subrings.
\end{abstract}
\section{Introduction}
Let $T$ be an associative ring with identity $1\neq 0$. In this paper we study the existence of maximal subrings (which contains $1_T$) in $T$. In \cite{azkrf} and \cite{azkra}, the authors characterized fields and Artinian commutative rings which have maximal subrings, respectively. Moreover, in \cite{azsid, azn, azconch, azkra, azkrc,azkrmc}, the existence of maximal subrings in more commutative rings investigated and obtained some facts as follows (that we use them in this paper in sequel):

\begin{enumerate}
\item \cite[Proposition 2.4]{azkra} each Artinian ring with zero characteristic has a maximal subring. Moreover, if an Artinian ring $R$ has no maximal subring, then $R$ is integral over $\mathbb{Z}_n$, for some $n>1$, see \cite[Corollary 2.5]{azkra}.
\item \cite[Corollary 2.4]{azn} each uncountable Noetherian ring has a maximal subring.
\item \cite[Theorem 3.1]{azsid} each uncountable UFD has a maximal subring.
\item \cite[Theorem 3.17]{azkrc} each infinite direct product of commutative rings has a maximal subrings.
\item \cite[Corollary 3.5]{azkrmc} either a ring has a maximal subring or is a Hilbert ring (i.e., each prime ideal is an intersection of a family of maximal ideals).
\item \cite[Corollary 2.5]{azconch} either a ring $T$ has a maximal subring or the unit group of $T$ is integral over the prime subring of $T$ (equivalently, either a ring $T$ has a maximal subring or for each subring $R$ of $T$ we have $U(R)=U(T)\cap R$, see \cite[Theorem 2.4]{azconch}).
\item \cite[Corollary 3.7]{azkrmc} each reduced ring of cardinality $>2^{2^{\aleph_0}}$ has a maximal subring. Moreover, if $R$ is a reduced ring with $|R|=2^{2^{\aleph_0}}$, then either $R$ has a maximal subring or $|Max(R)|=2^{\aleph_0}$, see \cite[Theorem 2.23]{azconch}.
\item \cite[Theorem 2.24]{azconch} each one-dimensional integral domain of cardinality $2^{2^{\aleph_0}}$ has a maximal subring.
\item \cite[Example 3.19]{azkrmc} for each infinite cardinal number $\mathfrak{a}$, there exists a ring of cardinality $\mathfrak{a}$ which has no maximal subring (note that for finite cardinal number $n$, the ring $\mathbb{Z}_n$ has no maximal subring).
\end{enumerate}

For non-commutative rings, in \cite{laffey} and \cite{klein}, the authors proved that if a ring $T$ has a finite maximal subring, then $T$ is finite too, see also \cite{lee} for a generalization of this result. The existence of maximal subrings for non-commutative rings first studied in \cite{azq}, we will mention some of the needed results of \cite{azq} in the next section. It is interesting to know that each ring $R$ can be considered as a maximal subring of a larger ring $T$, see \cite[Theorem 3.7]{azq}. Recently, in \cite{azdiv} the existence and the structure of maximal subrings in division rings investigated. We also refer the interested reader to \cite{azcond, azid}, for more results about the conductor ideals and certain ideals of the ring extension $R\subseteq T$, where $R$ is a maximal subring of a ring $T$.\\

If $R$ is a maximal subring of a ring $T$, then the extension $R\subseteq T$ is called a minimal ring extension, see \cite{frd,dbsid,dbsc,adjex,abmin}, for minimal ring extension of commutative rings. In \cite{dorsy} the authors studied minimal ring extensions of non-commutative rings. In fact, the authors generalized the results of \cite{dbsid} for non-commutative rings and characterized exactly (central) minimal ring extensions of prime rings and therefore simple rings, see \cite[Theorem 5.1, Corollary 5.3 and Theorem 6.1]{dorsy}. Also, they proved that a field $K$ has a minimal ring extension of the form $\mathbb{M}_n(D)$ where $D$ is a centrally finite division ring and $n>1$, if and only if $K$ has a proper subfield of finite index, see \cite[Lemma 6.6]{dorsy}. It is also interesting to know that if $D$ is a division ring then by the results in \cite{azq}, we have the following chain of maximal subrings:

$$
R_1=\{\begin{pmatrix}
  a & 0 & 0 \\
  0 & a & 0 \\
  0 & 0 & a
\end{pmatrix}\ |\ a\in D\}<
R_2=\{\begin{pmatrix}
  a & 0 & 0 \\
  0 & a & 0 \\
  0 & 0 & b
\end{pmatrix}\ |\ a, b\in D\} <
R_3=\begin{pmatrix}
  D & 0 & 0 \\
  0 & D & 0 \\
  0 & 0 & D
\end{pmatrix}<$$
$$R_4=\begin{pmatrix}
  D & 0 & 0 \\
  0 & D & D \\
  0 & 0 & D
\end{pmatrix}<
R_5=\begin{pmatrix}
  D & 0 & D \\
  0 & D & D \\
  0 & 0 & D
\end{pmatrix}<
R_6=\begin{pmatrix}
  D & D & D \\
  0 & D & D \\
  0 & 0 & D
\end{pmatrix}<
R_7=\begin{pmatrix}
  D & D & D \\
  D & D & D \\
  0 & 0 & D
\end{pmatrix}<\mathbb{M}_3(D)
$$

In \cite{blair}, the author studied non-commutative rings which are integral over their centers, especially in right noetherian rings. The author proved certain properties for such rings similar to commutative rings, we use the result of \cite{blair} to generalize the previous results that mentioned about the existence of maximal subrings in commutative rings to non-commutative rings that are integral over their centers. The ring $T=\mathbb{M}_n(K)$, where $K$ is a field, is an example of a ring which is integral over its center; in fact each subring of $T$ which contain the center of $T$ is integral over its center too. By the previous fact and use the structure of example \cite[P. 40]{lam}, for each natural number $n$ let $T_n:=\mathbb{M}_{2^n}(K)$, where $K$ is a field, then $T:=\bigcup_{n=1}^\infty T_n$ is a ring which is integral over its center.\\

A brief outline of this paper is as follows. In Section 2, we give some needed facts from \cite{azq, azcond, azdiv} and also some new immediate facts. We prove that certain rings either have a maximal subring or are duo rings. We see that a Von Neumann regular ring either has a maximal subring or is a duo ring. If $R$ is a left V-ring, then either $R$ has a maximal subring or $R$ is a duo ring. We observe that, if $R$ is a principal ideal domain, then either $R$ has a maximal subring (of the form $xRx^{-1}\cap R$, for some atom $x\in R$) or $R$ is a duo (Ore) domain. Section 3, of this paper is about the existence of maximal subrings in non-commutative rings, especially when a ring is integral over its center. We show that if $T$ is a ring which is integral over its center $C(T)$, $A$ is a conch maximal subring of $C(T)$ (i.e., $A$ has an element $u$ such that $u^{-1}\in C(T)\setminus A$) and the integral closure of $A$ in $T$, say $B$, is a subring of $T$, then $T$ has a maximal subring (and in fact $T$ has a maximal subring $R$ such that $R\cap C(T)=A$ and $u^{-1}\notin R$, i.e., we can extend a conch maximal subring of $C(T)$ to $T$ in the integral extension $C(T)\subseteq T$). For contraction, we see that if $R\subseteq T$ is an extension of rings, $V$ is a maximal subring of $T$ with $x\in U(R)\setminus V$, $xV=Vx$ (in particular, if $V$ is a duo ring) and either $V$ is left integrally closed in $T$ or $x^{-1}\in V$, then $R$ has a maximal subring. We show that, if $T$ is a ring which is integral over its center and $J(T)=0$, then either $T$ has a maximal subring or $T$ is a commutative ring. In fact, we prove that if $T$ is a ring with $J(T)=0$, then either $T$ has a maximal subring or $C(T)$ is integrally closed in $T$. Moreover, if $T$ is a prime ring with $J(T)=0$, then either $T$ has a maximal subring or $C(T)$ is algebraically closed in $T$. We prove that, if $T$ is an algebraic $K$-algebra over a field $K$, then either $T$ has a maximal subring or $U(T)$ is algebraic over the prime subring of $T$.\\

In Section 4, we study the existence of maximal subrings in rings which are integral over their centers with some algebraic properties on $T$ or $C(T)$, especially when either $T$ or $C(T)$ is Artinian, Noetherian, Hilbert and also rings which are direct product of an infinite family of rings (which are integral over their centers). We observe that if $T$ is a left Noetherian $K$-algebra, where $K$ is a field and $T$ is algebraic over $K$, then either $T$ has a maximal subring or $T$ is a left artinian ring. In particular, if $T$ has no maximal subring, then $T/J(T)$ is countable and $K$ is an absolutely algebraic field. We see that if $T$ is a left artinian ring which is integral over its center, then either $T$ has a maximal subring or $T$ is countable and is integral over its prime subring. We prove that if $T$ is a left noetherian ring which is integral over its center, then either $T$ has a maximal subring or $|T|\leq 2^{\aleph_0}$. We show that if $T$ is a reduced ring which is integral over its center and $C(T)$ is a Hilbert ring, then either $T$ has a maximal subring or $T$ is a commutative (Hilbert) ring. We prove that if $T$ is a ring which is integral over its center and $R$ is a subring of $T$ with $J(T)\cap R\subseteq R$, then either $T$ has a maximal subring or $J(R)=J(T)\cap R$ and $U(R)=U(T)\cap R$. We see that if $T$ is a direct product of an infinite family of rings $\{T_i\}_{i\in I}$ and each $T_i$ is integral over its center (in particular, if $T$ is integral over its center), then $T$ has a maximal subrings.\\

All rings in this paper are unital with $1\neq 0$. All subrings, modules and homomorphisms are also unital. If $R\subsetneq T$ is a ring extension and there exists no other subring between $R$ and $T$, then $R$ is called a maximal subring of $T$, or the extension $R\subseteq T$ is called a minimal ring extension. If $T$ is a ring, $I$ is an ideal of $T$ and $M$ is a left (resp. right) $T$-module, then $Min_T(I)$, $Max_r(T)$, $Max_l(T)$, $Max(T)$ and $l.ann_T(M)$ (resp. $r.ann_T(M)$), denote the set of all minimal prime ideals of $I$ in $T$, the set of all maximal right ideals of $T$, the set of all maximal left ideals of $T$, the set of all maximal ideals of $T$ and the left annihilator of $M$ in $T$ (resp. the right annihilator of $M$ in $T$), respectively. We use $Min(T)$ for $Min_T(0)$. The set of all left (resp. right) primitive ideals of $T$ is denoted by $Prm_l(T)$ (resp. $Prm_r(T)$). The characteristic of a ring $T$ is denoted by $Char(T)$. If $T$ is a ring, then $dim(T)$ denotes the classical Krull dimension of $T$ (i.e., the supremum of the lengths of all chains of prime ideals of $T$). We use $v.dim_K(T)$ for vector dimension of a $K$-algebra $T$ over a field $K$. $J(T)$ denotes the Jacobson radical of a ring $T$. For a ring $T$, $Nil_*(T)$, $Nil^*(T)$, $N(T)$ and $U(T)$ are the lower nilradical, the upper nilradical of $T$, the set of all nilpotent elements of $T$ and the units group of $T$, respectively. A prime ideal $P$ of a ring $T$ is called strongly (resp. completely) prime if $T/P$ has no nonzero nil ideal (resp. $T/P$ is a domain). If $T$ is a ring, then $\mathbb{M}_n(T)$ denote the ring of all $n\times n$ square matrices over $T$. If $M$ is a left (resp. right) module over a ring $T$, then the ring of all $T$-module homomorphisms of $M$ is denoted by $End({}_TM)$ (resp. $End(M_T)$). If $A$ is a right ideal of a ring $T$, then $\mathbb{I}(A)$ denotes the idealizer of $A$ in $T$, that is the largest subring of $T$ which $A$ is a two-sided ideal in it, i.e., $\mathbb{I}(A)=\{t\in T\ |\ tA\subseteq A\}$, the similar notation is used when $A$ is a left ideal of $T$. If $X$ is a subset of a ring $T$, then $C_T(X)$ is the centralizer of $X$ in $T$, in particular, $C(T)=C_T(T)$ is the center of $T$. For a ring $T$, the subring $\{n.1_T\ |\ n\in\mathbb{Z}\}$ is called the prime subring of $T$. A ring $T$ is called left (resp. right) quasi duo if each maximal left (resp. right) ideal of $T$ is a two-sided ideal of $T$, see \cite{lamq}. $T$ is called quasi duo if $T$ is left and right quasi duo ring. A ring $T$ is called left (resp. right) duo if each left (resp. right) ideal of $T$ is two-sided. Similarly duo rings are defined. If $R$ is a subring of a ring $T$ and $t\in T$, then we say that $t$ is left (resp. right) integral over $R$, if $t$ is a root of a left (resp. right) monic polynomial of degree $n\geq 1$ over $R$. $R$ is called left (resp. right) integrally closed in $T$, if every left (resp. right) integral element of $T$ over $R$, belongs to $R$. $R$ is called integrally closed in $T$, whenever $R$ is left and right integrally closed in $T$. If $T$ is a ring and $R$ is a subring of the center of $T$, then $T$ is called an $R$-algebra. It is clear that, if $T$ is an $R$-algebra, then the definitions of right and left integral elements and also left or right integrally closed for the extension $R\subseteq T$ are coincided. For other notations and definitions we refer the reader to \cite{good,lam,lam2,rvn}.

\section{Preliminaries}
We remind the reader that, a proper subring $S$ of a ring $T$ is maximal if and only if for each $x\in T\setminus S$, we have $S[x]=T$. By the previous fact and a natural use of Zorn's lemma one can easily see that, if $R$ is a proper subring of a ring $T$ and there exists $\alpha\in T$ such that $R[\alpha]=T$, then $T$ has a maximal subring $S$ with $R\subseteq S$ and $\alpha\notin S$. In particular, if $R\subsetneq T$ is a ring extension which is finite as ring (module) over $R$, then $T$ has a maximal subring which contains $R$. Therefore, if $T$ is a ring which has a division subring $D\neq T$ and $T$ has finite left/right vector dimension over $D$, then $T$ has a maximal subring. In this section we review some results from \cite{azq, azcond, azdiv} and obtain some new results too. Some important rings have naturally maximal subrings as follows:

\begin{thm}\label{t1}
\begin{enumerate}
\item \cite[Theorem 4.1]{azq}. Let $T$ be a ring and $A$ be a maximal right/left ideal of $T$ which is not an ideal of $T$. Then the idealizer of $A$ is a maximal subring of $T$. In particular, either a ring has a maximal subring or is a quasi duo ring.
\item \cite[Proposition 4.2]{azq}. Let $R$ be a maximal subring of a ring $T$ which contains a maximal one-sided ideal $A$ of $T$ which is not an ideal of $T$. Then $R$ is the idealizer of $A$ in $T$.
\item \cite[Theorem 4.4]{azq}. Let $T$ be a ring which is not a division ring. If $T$ is left/right primitive ring (in particular, if $T$ is a simple ring), then $T$ has a maximal subring.
\item \cite[Theorem 3.1]{azq}. For each ring $R$ and $n>1$, the matrix ring $\mathbb{M}_n(R)$ has a maximal subring.
\end{enumerate}
\end{thm}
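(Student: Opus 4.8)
I would prove the four assertions in order, each resting on the elementary criterion recalled just above the statement (a proper subring $S$ of $T$ is maximal exactly when $S[x]=T$ for every $x\in T\setminus S$) together with a short lattice argument. For (1), assume $A$ is a maximal right ideal that is not two-sided (the left case is symmetric). I would first note that the idealizer $\mathbb{I}(A)=\{t\in T:tA\subseteq A\}$ is a subring with $A\subseteq\mathbb{I}(A)\subsetneq T$: it contains $1$ and is closed under subtraction and multiplication, it contains $A$ since $aA\subseteq AT\subseteq A$, and it is \emph{proper} precisely because $A$ is not two-sided. Then, given any subring $S$ with $\mathbb{I}(A)\subsetneq S\subseteq T$, I pick $s\in S\setminus\mathbb{I}(A)$, so $sA\not\subseteq A$; now $sA+A$ is a right ideal strictly larger than $A$, hence equals $T$ by maximality of $A$, and since $A\subseteq\mathbb{I}(A)\subseteq S$ and $s\in S$ we get $T=sA+A\subseteq S$. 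Thus $\mathbb{I}(A)$ is maximal, and the ``in particular'' clause follows because a ring with no maximal subring can have no non-two-sided maximal one-sided ideal. Part (2) is the same computation run once more: if $R$ is a maximal subring containing such an $A$ and some $r\in R$ had $rA\not\subseteq A$, then $T=rA+A\subseteq R$, a contradiction; hence $R\subseteq\mathbb{I}(A)$, and maximality of $R$ together with properness of $\mathbb{I}(A)$ (Part (1)) forces $R=\mathbb{I}(A)$.

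For (3), since a simple ring with identity is left primitive, it suffices to handle a left primitive ring $T$ which is not a division ring (the right primitive case being symmetric). Let $M$ be a faithful simple left $T$-module; then $M\cong T/A$ for some maximal left ideal $A$, and faithfulness says exactly that the largest two-sided ideal of $T$ contained in $A$ --- equivalently $\operatorname{ann}_T(T/A)$ --- is $0$. If $A$ were itself two-sided, it would therefore equal $0$, so the left regular module ${}_TT\cong M$ would be simple, forcing $T$ to be a division ring against hypothesis. Hence $A$ is a maximal left ideal that is not two-sided, and Part (1) supplies the maximal subring $\mathbb{I}(A)$ of $T$.

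For (4), fix $n>1$ and a ring $R$, choose a maximal two-sided ideal $\mathfrak{M}$ of $R$ by Zorn's lemma, and consider the surjection $\pi:\mathbb{M}_n(R)\to\mathbb{M}_n(R/\mathfrak{M})$ with kernel $\mathbb{M}_n(\mathfrak{M})$. The target is a simple ring that is not a division ring, since it contains the nonzero nilpotent $e_{12}$ (using $n>1$ and $R/\mathfrak{M}\neq 0$); so by Part (3) it has a maximal subring $\bar S$. Then $S:=\pi^{-1}(\bar S)$ is a subring of $\mathbb{M}_n(R)$ containing $\ker\pi$, and by the correspondence theorem the subrings of $\mathbb{M}_n(R)$ containing $S$ correspond bijectively to the subrings of $\mathbb{M}_n(R/\mathfrak{M})$ containing $\bar S$, namely $\bar S$ and the whole ring. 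Hence $S$ is a proper maximal subring of $\mathbb{M}_n(R)$.

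The computations in (1)--(2) are purely formal once one decides to work with the right ideal $sA+A$ and invoke maximality of $A$; I expect the only genuinely delicate point to be in (3), namely recognizing that faithfulness of the simple module $M$ is precisely the vanishing of the core of $A$, and then ruling out the degenerate case in which $A$ is two-sided. Part (4) then costs essentially nothing beyond the correspondence theorem, provided one is willing to reduce modulo a maximal two-sided ideal instead of attempting to exhibit an explicit maximal subring of $\mathbb{M}_n(R)$ by hand.
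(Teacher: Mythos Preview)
Your proof is correct in all four parts. The paper itself does not prove this theorem: it is stated as a compilation of results cited from \cite{azq}, so there is no proof in the paper to compare against. Your arguments are the natural ones and almost certainly coincide with those in the cited source---the idealizer-plus-lattice argument for (1) and (2) is essentially forced, the reduction in (3) via the core of the maximal left ideal annihilating the faithful simple module is standard, and the reduction-to-simple-quotient trick in (4) is the cleanest route. One cosmetic remark: in (1) your justification ``$aA\subseteq AT\subseteq A$'' is slightly roundabout; since $A$ is a right ideal and $A\subseteq T$, one has $aA\subseteq A$ directly for $a\in A$.
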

In particular, if a ring $T$ has no maximal subring, then $T$ is right primitive if and only if $T$ is left primitive; moreover, $\mathcal{M}:=Max(T)=Max_l(T)=Max_r(T)=Prm_l(T)=Prm_r(T)$, for each $M\in\mathcal{M}$, the ring $\frac{T}{M}$ is a division ring and $J(T)=\bigcap_{M\in Max(T)}M$. Therefore $T/J(T)$ embeds in $\prod_{M\in\mathcal{M}} T/M$. In particular, $T/J(T)$ is a reduced ring. Hence we have the following immediate result.

\begin{cor}\label{t2}
Let $T$ be a ring, then either $T$ has a maximal subring or $N(T)\subseteq J(T)$.
\end{cor}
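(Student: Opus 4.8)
The plan is to derive this from the structural facts just established for rings without maximal subrings. Suppose $T$ has no maximal subring; I must show $N(T)\subseteq J(T)$. By the discussion preceding Corollary~\ref{t2} (using Theorem~\ref{t1}(1) and (3)), $T$ is quasi duo, and for each $M\in\mathcal M:=Max(T)$ the quotient $T/M$ is a division ring, with $J(T)=\bigcap_{M\in\mathcal M}M$. The key observation is that $T/J(T)$ therefore embeds into the product $\prod_{M\in\mathcal M}T/M$ of division rings, which is a reduced ring (a product of reduced rings is reduced, and division rings are reduced). Hence $T/J(T)$ is reduced.

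From here the conclusion is immediate: if $a\in N(T)$, say $a^n=0$, then the image $\bar a$ of $a$ in $T/J(T)$ satisfies $\bar a^{\,n}=0$, so $\bar a=0$ since $T/J(T)$ is reduced, i.e.\ $a\in J(T)$. This gives $N(T)\subseteq J(T)$, as desired. So the whole argument is: no maximal subring $\Rightarrow$ $T/J(T)$ reduced $\Rightarrow$ nilpotents lie in $J(T)$.

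The only step requiring any care — and the place I'd expect a referee to look — is the justification that $T/J(T)$ embeds into a product of division rings; but this has essentially been spelled out already in the paragraph following Theorem~\ref{t1}, where it is noted that $J(T)=\bigcap_{M\in Max(T)}M$ with each $T/M$ a division ring, so the natural map $T/J(T)\to\prod_{M\in\mathcal M}T/M$ is injective. Thus there is no real obstacle; the corollary is a direct packaging of that paragraph's content, and the proof is a couple of lines invoking reducedness of the product.
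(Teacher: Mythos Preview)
Your proof is correct and matches the paper's approach exactly: the paper states this corollary as an ``immediate result'' of the preceding paragraph, which already observes that $T/J(T)$ embeds in $\prod_{M\in\mathcal M}T/M$ and is therefore reduced. Your write-up simply spells out the one-line deduction $N(T)\to 0$ in $T/J(T)$, which is precisely what the paper intends.
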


We remind that if $T$ is a commutative ring, then either $T$ has a maximal subring or $T$ is a Hilbert ring, see \cite[Corollary 3.5]{azkrmc}. In particular, if $T$ has no maximal subring, then $N(T)=J(T)$.

\begin{cor}\label{t3}
Let $T$ be a ring, then either $T$ has a maximal subring or for each subring $R$ of $T$ and maximal left/right ideal $M$ of $T$, the ideal $R\cap M$ is a completely prime ideal of $R$.
\end{cor}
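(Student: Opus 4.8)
The plan is to combine Corollary~\ref{t2} with the idealizer description of maximal subrings in Theorem~\ref{t1}(1) and (2). Suppose $T$ has no maximal subring. Then by Theorem~\ref{t1}(1), $T$ is a quasi duo ring, so every maximal left (or right) ideal $M$ of $T$ is in fact a two-sided ideal, and by the discussion following Theorem~\ref{t1} we have $\mathcal{M}:=Max(T)=Max_l(T)=Max_r(T)$, each quotient $T/M$ is a division ring, and $J(T)=\bigcap_{M\in\mathcal{M}}M$. Let $R$ be any subring of $T$ and fix $M\in Max_l(T)=Max_r(T)=Max(T)$; I want to show $R\cap M$ is a completely prime ideal of $R$, i.e.\ that $R/(R\cap M)$ is a domain.

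The key step is to embed $R/(R\cap M)$ into the division ring $T/M$. Since $M$ is a two-sided ideal of $T$, the set $R\cap M$ is a two-sided ideal of $R$, and the composite map $R\hookrightarrow T\twoheadrightarrow T/M$ has kernel exactly $R\cap M$; hence it induces an injective ring homomorphism $R/(R\cap M)\hookrightarrow T/M$. A subring of a division ring contains no nonzero zero divisors, so $R/(R\cap M)$ is a domain, which means $R\cap M$ is a completely prime ideal of $R$ (in particular it is a proper ideal, since $1\notin M$ forces $1\notin R\cap M$). Running this argument for every $M$ and every subring $R$ gives the conclusion.

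The main obstacle — really the only subtlety — is making sure the dichotomy is stated with the right side: we must invoke Corollary~\ref{t2} and the paragraph after Theorem~\ref{t1} to know that when $T$ has no maximal subring, maximal one-sided ideals are automatically two-sided and the quotients are division rings, so that ``maximal left/right ideal'' and ``maximal ideal'' coincide and the embedding into a division ring is available. Once that structural input is in hand, the remainder is the routine observation that a subring of a division ring is a domain.
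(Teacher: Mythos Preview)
Your argument is correct and is essentially the same as the paper's: assuming $T$ has no maximal subring, use Theorem~\ref{t1} and the remarks after it to get that every maximal one-sided ideal $M$ is two-sided with $T/M$ a division ring, then observe that $R/(R\cap M)\cong (R+M)/M\hookrightarrow T/M$ is a domain. The reference to Corollary~\ref{t2} is not actually needed here (only the paragraph after Theorem~\ref{t1} is used), but this does not affect correctness.
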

\begin{proof}
If $T$ has no maximal subring, then for each maximal left/right ideal of $T$, the ring $T/M$ is a division ring. Since $(R+M)/M$ is a subring of $T/M$, we deduce that $(R+M)/M$ is a domain. Now by the ring isomorphism $R/(R\cap M)\cong (R+M)/M$, the conclude that $R/(R\cap M)$ is a domain, i.e., $R\cap M$ is a completely prime ideal of $R$.
\end{proof}

One can easily see that, if $T$ is a quasi duo ring and $x_1,\ldots,x_n\in T$, then $Tx_1+\cdots+Tx_n=T$ if and only if $x_1T+\cdots+x_n T=T$ and the converse also holds. In fact, if $T$ is a quasi duo ring and $I$ is a proper left (resp. right) ideal of $T$, then $IT$ (resp. $TI$) is a proper ideal of $T$. Consequently, each quasi duo ring is a Deddkind finite ring (i.e., whenever $ab=1$ then $ba=1$). Now we have the following result.

\begin{cor}\label{t4}
Let $F=\bigoplus_{i\in I} R$ be a free left $R$-module over a ring $R$ and $|I|\geq 2$. Then $E=End_R(F)$ has a maximal subring.
\end{cor}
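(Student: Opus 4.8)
The plan is to reduce $E=End_R(F)$ to a full matrix ring $\mathbb{M}_m(S)$ with $m\geq 2$ and $S$ a ring, and then to quote Theorem~\ref{t1}(4). I would treat the cases $|I|$ finite and $|I|$ infinite separately.

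If $|I|=n$ is finite with $n\geq 2$, then $F\cong {}_RR^{\,n}$ as a left $R$-module, so
\[
E=End_R(F)\cong End_R\bigl({}_RR^{\,n}\bigr)\cong \mathbb{M}_n(S),\qquad S:=End_R({}_RR),
\]
where $S$ is a ring with $1\neq 0$ (it contains the identity endomorphism; concretely $S\cong R$ or $S\cong R^{op}$ according to the side on which one lets endomorphisms act, which is immaterial here). Since $n\geq 2$, Theorem~\ref{t1}(4) provides a maximal subring of $\mathbb{M}_n(S)\cong E$.

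If $I$ is infinite, then $|I\sqcup I|=|I|$, so any bijection $I\sqcup I\to I$ induces an isomorphism of left $R$-modules $F\oplus F=\bigoplus_{i\in I\sqcup I}R\cong\bigoplus_{i\in I}R=F$. Hence $E=End_R(F)\cong End_R(F\oplus F)\cong\mathbb{M}_2\bigl(End_R(F)\bigr)=\mathbb{M}_2(E)$; applying Theorem~\ref{t1}(4) to the ring $E$ with $n=2$ shows that $\mathbb{M}_2(E)$, and therefore $E$, has a maximal subring.

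I do not anticipate a genuine obstacle. The one point requiring attention is that in the finite case one should use the isomorphism $F\cong {}_RR^{\,n}$ so that $E$ becomes a \emph{full} matrix ring over a single ring; merely partitioning $I$ into two nonempty unequal blocks would only exhibit $E$ as a generalized, triangular-type matrix ring, to which Theorem~\ref{t1}(4) does not directly apply. As an alternative for the infinite case one could avoid the isomorphism $F\cong F\oplus F$: the one-sided shift operators on a countably infinite sub-basis of $F$ yield $a,b\in E$ with $ab=1\neq ba$, so $E$ is not Dedekind finite, hence not quasi duo by the remark preceding this corollary, and then Theorem~\ref{t1}(1) gives a maximal subring of $E$ as the idealizer of an appropriate maximal one-sided ideal.
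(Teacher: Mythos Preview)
Your proof is correct. In the finite case you and the paper do the same thing: identify $E$ with a full matrix ring and invoke Theorem~\ref{t1}(4). In the infinite case your primary argument differs from the paper's: you use the bijection $I\sqcup I\to I$ to obtain $E\cong\mathbb{M}_2(E)$ and then apply Theorem~\ref{t1}(4) directly, whereas the paper constructs explicit shift operators $A,B$ on a countable sub-basis with $BA=1\neq AB$, concludes that $E$ is not Dedekind finite and hence not quasi duo, and then invokes Theorem~\ref{t1}(1). Amusingly, you list the paper's route as your alternative. Your $\mathbb{M}_2(E)$ argument is slicker and keeps the whole corollary under the umbrella of Theorem~\ref{t1}(4); the paper's argument is more hands-on but has the mild advantage of exhibiting the maximal subring concretely as an idealizer and of not invoking the cardinal-arithmetic fact $|I\sqcup I|=|I|$ (which, for arbitrary infinite $I$, uses choice).
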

\begin{proof}
If $|I|=n$ is finite, then clearly $E\cong \mathbb{M}_n(R)$ and therefore $E$ we are done by $(4)$ of Theorem \ref{t1} (also note that in fact in this case $E$ is not a quasi duo ring). Thus assume that $I$ is infinite. We may assume that $I=\mathbb{N}\cup J$ where $J\cap \mathbb{N}=\emptyset$ and $\{e_i\ |\ i\in I\}$ be the natural basis for $F$. Now define $A, B \in E$ as follows. For each $i\in \mathbb{N}$, $A(e_i)=e_{i+1}$ and otherwise $A(e_i)=e_i$, for $i\in J$, and $B(e_1)=0$, $B(e_i)=e_{i-1}$ for $i\geq 2$ and $B(e_i)=e_i$ for $i\in J$. Then one can easily see that $BA=1_E$ but $AB\neq 1_E$. Thus $E$ is not a Deddkinf finite ring and therefore is not a quasi duo ring. Hence $E$ has a maximal subring.
\end{proof}

Note that if $R$ is a ring, then $R$ has a maximal subring if and only if $E=End({}_RR)\cong R^{op}$ (resp. $End(R_R)\cong R$) has a maximal subring. Since any left/right vector space over a division ring is free we have the following immediate result.

\begin{cor}\label{t5}
Let $D$ be a division ring and $V$ be a left/right vector space over $D$ of vector dimension $>1$. Then $End_D(V)$ has a maximal subring.
\end{cor}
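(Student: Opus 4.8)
The statement to prove is Corollary~\ref{t5}: if $D$ is a division ring and $V$ is a left (or right) vector space over $D$ with $\dim_D V > 1$, then $\operatorname{End}_D(V)$ has a maximal subring. The plan is to reduce this immediately to Corollary~\ref{t4}. Indeed, since $V$ is a vector space over the division ring $D$, it has a basis, so $V\cong\bigoplus_{i\in I}D$ as a left $D$-module for some index set $I$ with $|I|=\dim_D V\geq 2$. Then $\operatorname{End}_D(V)\cong\operatorname{End}_D\bigl(\bigoplus_{i\in I}D\bigr)=E$, which is exactly the ring appearing in Corollary~\ref{t4} with $R=D$. That corollary asserts $E$ has a maximal subring, and since having a maximal subring is clearly preserved under ring isomorphism, we conclude $\operatorname{End}_D(V)$ has a maximal subring. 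The right-module case is symmetric, using the $\operatorname{End}(R_R)\cong R$ type remark preceding the corollary, or simply by applying the same argument on the opposite side.

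**What actually needs checking.** The only mild subtlety is that Corollary~\ref{t4} is stated for a free module over an arbitrary ring $R$, and here we are specializing to $R=D$ a division ring; no issue arises, since division rings are rings. One should note that Corollary~\ref{t4} already splits into the finite case (where $E\cong\mathbb{M}_n(D)$ and Theorem~\ref{t1}(4) applies, even though $E$ is then not quasi-duo) and the infinite case (where $E$ fails to be Dedekind-finite, hence fails to be quasi-duo, hence has a maximal subring by Theorem~\ref{t1}(1)). So for $\operatorname{End}_D(V)$ the dichotomy is: if $\dim_D V=n$ is finite with $n>1$, invoke the matrix ring result; if $\dim_D V$ is infinite, produce the one-sided shift operators $A,B$ with $BA=1$ and $AB\neq 1$ as in the proof of Corollary~\ref{t4}, concluding $\operatorname{End}_D(V)$ is not quasi-duo.

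**The main obstacle.** Frankly, there is no serious obstacle: the result is a direct corollary, and the real content lies in the already-established Corollary~\ref{t4} and Theorem~\ref{t1}. The only thing to be careful about is keeping track of sides---left vs.\ right modules, and the fact that $\operatorname{End}({}_RR)\cong R^{op}$ while $\operatorname{End}(R_R)\cong R$---so that the isomorphism to the ring in Corollary~\ref{t4} is set up on the correct side. Since a ring has a maximal subring if and only if its opposite ring does (a subring of $T$ is the same set as a subring of $T^{op}$), even this bookkeeping is harmless. Thus I expect the proof to be two or three sentences: choose a basis, identify $\operatorname{End}_D(V)$ with an endomorphism ring of a free module of rank $\geq 2$, and cite Corollary~\ref{t4}.
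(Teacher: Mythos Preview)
Your proposal is correct and matches the paper's approach exactly: the paper simply remarks that any left/right vector space over a division ring is free and deduces Corollary~\ref{t5} immediately from Corollary~\ref{t4}, which is precisely your reduction. Your additional comments on the finite/infinite dichotomy and left/right bookkeeping are accurate but go beyond what the paper records, since it treats the result as an immediate consequence with no further proof.
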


In what following, we see that certain rings which have no maximal subring are duo.

\begin{cor}\label{t6}
Let $R$ be a von Neumann regular ring. Then either $R$ has a maximal subring or $R$ is a strongly von Neumann regular (reduced duo) ring.
\end{cor}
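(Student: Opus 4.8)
The plan is to establish the dichotomy in contrapositive form: assuming $R$ has \emph{no} maximal subring, I will show that $R$ must be reduced, and then invoke the classical equivalence that a reduced von Neumann regular ring is strongly (von Neumann) regular, which is exactly the same as being a reduced duo ring. The starting observation is that every von Neumann regular ring is semiprimitive, i.e.\ $J(R)=0$ (see \cite{good} or \cite{lam}). Hence, by Corollary \ref{t2}, the hypothesis that $R$ has no maximal subring yields $N(R)\subseteq J(R)=0$, so $R$ has no nonzero nilpotent elements; that is, $R$ is reduced. (Alternatively, one may quote the remark after Theorem \ref{t1}, which gives that $R/J(R)$ is reduced, and then use $J(R)=0$.)

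Next I would promote ``reduced $+$ regular'' to ``strongly regular''. Let $a\in R$ and pick $x\in R$ with $axa=a$. Then $e:=ax$ is an idempotent, and in a reduced ring every idempotent is central (a short computation shows $(er-ere)^2=(re-ere)^2=0$ for all $r\in R$, whence $er=ere=re$). Therefore $a=axa=a(ax)=a^2x\in a^2R$, so $R$ is strongly von Neumann regular.

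It remains only to record the standard fact that strong regularity is equivalent to being a reduced duo ring: reducedness is immediate since $a^2=0$ forces $a=a^2x=0$, while for each $a\in R$ the identity $a=a^2x$ together with the centrality of the idempotent $ax$ gives $Ra=aR$, so every one-sided ideal, being a sum of principal ones, is two-sided; this is well documented in \cite{good}. Thus the only substantive step is the implication ``no maximal subring $\Rightarrow$ reduced'', which is handled cleanly by Corollary \ref{t2}; beyond correctly quoting the classical equivalences among reduced regular, strongly regular, and reduced duo rings, I do not anticipate any real obstacle.
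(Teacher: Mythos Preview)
Your proof is correct and follows essentially the same route as the paper: both use that a von Neumann regular ring has $J(R)=0$, apply Corollary~\ref{t2} to conclude $R$ is reduced when it has no maximal subring, and then invoke the standard fact that a reduced von Neumann regular ring is strongly regular (hence duo). The only difference is that you spell out the ``reduced regular $\Rightarrow$ strongly regular $\Rightarrow$ duo'' implications explicitly, whereas the paper simply states them.
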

\begin{proof}
First note that $J(R)=0$, for $R$ is a von Neumann regular ring. Hence if $R$ has no maximal subring, then by Corollary \ref{t2}, $R$ is a reduced ring. Therefore $R$ is a strongly von Neumann regular ring and therefore is a duo ring.
\end{proof}

If in the previous corollary we assume that the center of $R$ is a field, then we have a better conclusion as follow.

\begin{cor}
Let $T$ be a von Neumann regular ring and $C(T)=F$ is a field. Then either $T$ has a maximal subring or $T$ is a division ring.
\end{cor}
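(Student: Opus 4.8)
The plan is to reduce this to the previous Corollary~\ref{t6} and then analyze the structure of a strongly von Neumann regular ring whose center is a field. First I would apply Corollary~\ref{t6}: either $T$ has a maximal subring, in which case we are done, or $T$ is a strongly (i.e., unit-)regular reduced duo ring. So assume the latter; the goal is to show $T$ is a division ring.

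Next I would exploit the structure of strongly regular rings. A strongly von Neumann regular ring $T$ is reduced, and in a reduced ring every idempotent is central; combined with von Neumann regularity this shows that for every $a\in T$ there is a (necessarily central) idempotent $e$ with $aT=eT$ and with $a$ a unit in $eT$. The key point will be that the center $C(T)=F$ being a field forces the only idempotents of $T$ to be $0$ and $1$: indeed, since idempotents are central they lie in $F$, and a field has no idempotents other than $0$ and $1$. Hence for every nonzero $a\in T$ we get $aT=T$, and (using that $T$ is duo, or directly that a strongly regular ring is Dedekind-finite) $a$ has a two-sided inverse. Therefore every nonzero element of $T$ is a unit, i.e., $T$ is a division ring.

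The main obstacle — really the only delicate point — is justifying cleanly that in a strongly regular ring the idempotent $e$ associated to $a$ via $aT=eT$ is central and that $a$ becomes invertible once $e=1$; this is standard (strongly regular rings are exactly the reduced regular rings, and they are unit-regular and Dedekind-finite), but one should cite or recall it. Everything else is immediate: the passage through Corollary~\ref{t6} handles the maximal-subring dichotomy, and the hypothesis $C(T)=F$ a field is used precisely to kill nontrivial idempotents. I expect the write-up to be only a few lines.
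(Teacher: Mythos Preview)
Your proposal is correct and follows essentially the same approach as the paper: apply Corollary~\ref{t6} to reduce to a strongly regular (hence reduced) ring, observe that idempotents are then central and therefore lie in the field $F$, so the only idempotents are $0$ and $1$, whence every nonzero element is a unit. The paper's write-up is marginally more direct---it takes $x=xyx$ and notes that $xy$ and $yx$ are themselves the nonzero central idempotents, so $xy=yx=1$ immediately from $C(T)=F$ being a field---avoiding any appeal to unit-regularity or Dedekind-finiteness; but this is a cosmetic difference, not a different route.
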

\begin{proof}
Assume that $T$ has no maximal subring, thus by Corollary \ref{t6}, we infer that $T$ is a reduced ring which immediately implies that each idempotent element of $T$ is central. Now assume that $0\neq x\in T$, thus by our assumption there exists $y\in T$ such that $x=xyx$. It is easy to see that $xy$ and $yx$ are nonzero idempotent and therefore $xy$ and $yx$ are central. Therefore $xy=1=yx$, for $C(T)=F$ is a field.
\end{proof}

We remind that a ring $T$ is called a left $V$-ring, if each simple left $T$-modules are injective, see \cite[Section 3H]{lam2}.

\begin{prop}\label{t7}
Let $T$ be a left V-ring. Then either $T$ has a maximal subring or $T$ is a duo ring. In particular, if $T$ is a left $V$-ring which has no maximal subring, then $T$ is a right $V$-ring.
\end{prop}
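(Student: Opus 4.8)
The plan is to combine Theorem~\ref{t1}(1) with the structural description of maximal-subring-free rings recorded just after Theorem~\ref{t1}. First I would assume that $T$ has no maximal subring and aim to prove $T$ is duo. By Theorem~\ref{t1}(1) a ring with no maximal subring is quasi duo, so every maximal one-sided ideal of $T$ is two-sided; moreover, from the remarks following Theorem~\ref{t1}, $\operatorname{Max}_l(T)=\operatorname{Max}(T)$, each $T/M$ for $M\in\operatorname{Max}(T)$ is a division ring, and $J(T)=\bigcap_{M\in\operatorname{Max}(T)}M$. The key point I would exploit is the defining property of a left $V$-ring: $J(T)=0$, since in a left $V$-ring every simple module is injective and hence the Jacobson radical, being the intersection of annihilators of simple modules, vanishes (indeed in a left $V$-ring every left ideal is an intersection of maximal left ideals). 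So under the no-maximal-subring hypothesis we get that $T$ embeds in $\prod_{M\in\operatorname{Max}(T)} T/M$, a product of division rings.

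Next I would show this product representation forces $T$ to be reduced and in fact duo. Reducedness is immediate (a product of division rings is reduced), or one can simply cite Corollary~\ref{t2}. For the duo property, the cleanest route is: since $T$ is a subdirect product of division rings $D_M=T/M$, any left ideal $L$ of $T$ has, for each $M$, image $L_M$ which is either $0$ or all of $D_M$ (because $D_M$ is a division ring and the image is a left ideal of it). The set $S=\{M: L_M=D_M\}$ then determines $L$ up to the subdirect structure, and the same set describes $L$ as a right ideal; more carefully, one checks that for a subdirect product of division rings every one-sided ideal is the contraction of an ideal of the full product, hence two-sided. Alternatively—and perhaps more robustly—one uses that a left $V$-ring with $J=0$ in which all maximal left ideals are two-sided is what is sometimes called a strongly regular / fully idempotent situation; I would argue directly that quasi duo plus von Neumann regular implies duo via Corollary~\ref{t6}, after first verifying that a left $V$-ring with no maximal subring is von Neumann regular (a left $V$-ring is always left Noetherian-free of nonzero Jacobson radical, and being a subdirect product of division rings makes it regular).

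The final clause, that such a $T$ is also a right $V$-ring, I would obtain from the symmetry that duo rings enjoy: once $T$ is duo, left and right module structures on cyclic modules coincide, and the injectivity of simple left modules transfers to simple right modules. Concretely, in a duo ring the lattice of left ideals equals the lattice of right ideals, the simple left modules are $T/M$ for $M\in\operatorname{Max}(T)$ and likewise on the right, and one checks the Baer criterion for injectivity of each simple right module using the two-sidedness of all ideals together with the already-established left $V$-ring property.

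The main obstacle I anticipate is the middle step: deducing the duo conclusion from ``quasi duo plus subdirect product of division rings.'' Quasi duo only gives that \emph{maximal} one-sided ideals are two-sided, which a priori is much weaker than all one-sided ideals being two-sided; bridging this gap is exactly where the left $V$-ring hypothesis (forcing every left ideal to be an intersection of maximal left ideals, hence two-sided since each maximal left ideal is) must be used, and I would need to be careful that the intersection of two-sided ideals arising as maximal left ideals really recovers an arbitrary left ideal. Making that argument airtight—rather than the routine verifications around reducedness and the left/right symmetry—is the heart of the proof.
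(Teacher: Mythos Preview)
Your argument via the characterization of left $V$-rings---every left ideal is an intersection of maximal left ideals, and these are two-sided under quasi duo---is correct and yields that $T$ is \emph{left} duo. But ``duo'' means two-sided duo, and here is a real gap: left duo does not imply right duo, and both of your proposed bridges fail. A subdirect product of division rings need not be von Neumann regular (e.g.\ $\mathbb{Z}\hookrightarrow\prod_p \mathbb{Z}/p\mathbb{Z}$), and one-sided ideals of a subdirect product are not in general contractions of ideals of the full product, so neither your approach (a) nor your parenthetical justification for regularity goes through. You also cannot bootstrap right duo from ``$T$ is a right $V$-ring,'' since that conclusion is derived \emph{from} full duo, not before it; treating the left/right symmetry as ``routine'' is exactly where the argument breaks.

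The paper closes the gap by a different mechanism. After observing that $T$ is reduced, it applies the criterion (Goodearl) that a reduced ring is von Neumann regular provided $T/P$ is regular for every completely prime ideal $P$. For such $P$, the quotient $T/P$ is again a left $V$-ring and a domain, hence simple (a known fact about $V$-domains), hence a division ring by Theorem~\ref{t1}(3); thus $T$ is von Neumann regular, and Corollary~\ref{t6} then gives strongly regular, which is automatically two-sided duo. Your left-duo observation can in fact be completed in a similar spirit---from $Ta=\bigcap\{M\in Max(T):a\in M\}$ and the fact that each $T/M$ is a division ring one gets $Ta=Ta^2$, so $a=ba^2$ for some $b$, and reducedness (where $xy=0\Rightarrow yx=0$) then yields $a=aba$---but that is exactly the von Neumann regularity step your proposal asserted without proof.
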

\begin{proof}
Since $T$ is a left V-ring, then by \cite[Theorem 3.75]{lam2}, we infer that $J(T)=0$ and also for each ideal $I$ of $T$, $T/I$ is a left V-ring too. Now assume that $T$ has no maximal subring (or $T$ is a quasi duo ring), then by Corollary \ref{t6}, we deduce that $T$ is a reduced ring. Now we claim that $T$ is von Neumann regular ring. To see this, note that by \cite[Theorem 1.21]{vnrg}, it suffices to show that for each completely prime ideal $P$ of $T$, the ring $R/P$ is von Neumann regular. Hence assume that $P$ is a completely prime ideal of $T$ (i.e., $T/P$ is a domain), by the first part of the proof $T/P$ is a left V-ring, and therefore by \cite[P. 335, Ex. 23]{rvn}, we deduce that $T/P$ is a simple ring. Since $T$ has no maximal subring (or is a quasi duo ring) we conclude that $T/P$ is a division ring, by $(3)$ of Theorem \ref{t1}. Thus $T/P$ is a von Neumann regular and therefore $T$ is a von Neumann regular ring. Hence by Corollary \ref{t6}, $T$ is a duo ring. The final part is evident now, for $T$ is a duo ring and use \cite[Theorem 3.75]{lam2}.
\end{proof}

For the next result we need the following lemma.

\begin{lem}\label{t8}
Let $R\subseteq T$ be an extension of rings and $x\in (R\cap U(T))\setminus U(R)$. Then $\mathbb{I}_R(xR)=xRx^{-1}\cap R$.
\end{lem}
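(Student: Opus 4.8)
The claim is a direct containment argument in both directions, so I would simply unwind the definitions. Recall $\mathbb{I}_R(xR) = \{r \in R \mid r(xR) \subseteq xR\}$, the idealizer of the right ideal $xR$ inside $R$. The first step is to observe that since $x \in U(T)$, left multiplication by $x^{-1}$ makes sense in $T$, so for $r \in R$ the condition $r(xR) \subseteq xR$ is equivalent to $x^{-1}(rxR) \subseteq R$, and since $R$ is a ring closed under multiplication and $R \subseteq T$, one checks that $rxR \subseteq xR$ holds if and only if $rx \in xR$ (using that $1 \in R$ for one direction and that $xR$ is a right ideal of $R$ for the other). Thus $r \in \mathbb{I}_R(xR)$ iff $rx \in xR$ iff $x^{-1}rx \in R$ iff $r \in xRx^{-1} \cap R$, after noting $r \in R$ by hypothesis. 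This chain of equivalences is the whole content.

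The one subtlety worth spelling out is the equivalence $r(xR) \subseteq xR \iff rx \in xR$: the forward direction is immediate by taking the generator $x \cdot 1 \in xR$ (using $1 \in R$); the reverse direction follows because if $rx = xs$ for some $s \in R$, then $r(xR) = (rx)R = (xs)R = x(sR) \subseteq xR$ since $sR \subseteq R$. Then $rx = xs$ with $s \in R$ is exactly $x^{-1}rx = s \in R$, i.e. $r \in xRx^{-1}$; combined with the standing hypothesis $r \in R$ this gives $r \in xRx^{-1} \cap R$, and conversely any element of $xRx^{-1} \cap R$ has the form $r = xsx^{-1}$ with $s \in R$ and $r \in R$, whence $rx = xs \in xR$.

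I do not anticipate any real obstacle here; the hypothesis $x \in R \cap U(T)$ is used to guarantee $x^{-1}$ exists in $T$ so that conjugation $xRx^{-1}$ is well-defined as a subset of $T$, and the hypothesis $x \notin U(R)$ is not actually needed for the set-theoretic identity itself (it is presumably included because it is the case of interest: when $x \in U(R)$ one has $\mathbb{I}_R(xR) = R = xRx^{-1} \cap R$ trivially, whereas when $x^{-1} \notin R$ the idealizer can be a proper subring). I would therefore state the proof in three or four lines: unwind the definition of the idealizer, use invertibility of $x$ in $T$ to pass to the conjugation description, and verify both inclusions $\mathbb{I}_R(xR) \subseteq xRx^{-1} \cap R$ and $xRx^{-1} \cap R \subseteq \mathbb{I}_R(xR)$ by the generator computation above.
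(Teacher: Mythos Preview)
Your proof is correct and follows essentially the same route as the paper: both arguments reduce membership in $\mathbb{I}_R(xR)$ to the condition $rx\in xR$ and then pass to $r\in xRx^{-1}$ via invertibility of $x$ in $T$, checking the two inclusions by the same generator computation. Your remark that the hypothesis $x\notin U(R)$ is not needed for the set-theoretic identity is also accurate.
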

\begin{proof}
It is not hard to see that $xRx^{-1}$ is a subring of $T$ (which is isomorphic to $R$ as ring). Thus $xRx^{-1}\cap R$ is a subring of $R$ and it is easy to see that $xR\subseteq xRx^{-1}$ and $xR$ is an ideal of $xRx^{-1}\cap R$. Thus $xRx^{-1}\cap R\subseteq \mathbb{I}_R(xR)$. Conversely, assume that $r\in\mathbb{I}_R(xR)$. Thus $r\in R$ and $rxR\subseteq xR$. Hence $rx\in xR$ and therefore $r\in xRx^{-1}$. Thus $r\in R\cap xRx^{-1}$ and we are done.
\end{proof}

For the definition of atom and (left/right) principal ideal ring,  we refer the reader to \cite[Sec 1.3]{cohnfir}.

\begin{thm}\label{t9}
Let $R$ be a prime principal ideal ring. Then either $R$ has a maximal subring or $R$ is a duo principal ideal domain. Moreover, if $R$ is a principal ideal domain which is not a duo domain, then $R$ has a maximal subring of the form $xRx^{-1}\cap R$ or $x^{-1}Rx\cap R$, for some atom $x\in R$.
\end{thm}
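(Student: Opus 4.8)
The plan is to prove the first assertion by contradiction: assuming $R$ has no maximal subring, I will show that $R$ is a duo principal ideal domain. So suppose $R$ has no maximal subring. By part (1) of Theorem \ref{t1}, $R$ is then quasi duo, so (by the remarks following that theorem) every maximal one-sided ideal $M$ of $R$ is two-sided with $R/M$ a division ring, and by Corollary \ref{t2} we have $N(R)\subseteq J(R)$; moreover, being a principal ideal ring, $R$ is left and right Noetherian, hence $R$ is a prime Noetherian ring. It then suffices to prove (a) that $R$ is a domain, whence a principal ideal domain, and (b) that a principal ideal domain with no maximal subring is duo. I will obtain (b) as a by-product of the ``moreover'' statement, so the heart of the matter is (a).

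For the ``moreover'' part, let $R$ be a principal ideal domain that is not duo; say $R$ is not right duo (the not-left-duo case is entirely symmetric and yields a maximal subring of the form $x^{-1}Rx\cap R$). First I claim that some atom $p$ of $R$ has $pR$ not a two-sided ideal. Otherwise $Rp\subseteq pR$ for every atom $p$; then given a non-two-sided right ideal $aR$ and a nontrivial factorization $a=bc$, one of $bR$, $cR$ is again non-two-sided, since $Rb\subseteq bR$ and $Rc\subseteq cR$ would force $Rbc\subseteq bRc\subseteq bcR$; iterating, and using the ascending chain condition on principal right ideals, one reaches an atom $p$ with $pR$ non-two-sided, a contradiction. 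Now $pR$ is a maximal right ideal which is not an ideal, so by part (1) of Theorem \ref{t1} the idealizer $\mathbb{I}_R(pR)$ is a maximal subring of $R$. Since $R$ is a Noetherian domain it is an Ore domain, so it embeds in its division ring of fractions $Q$; as $p$ is a non-unit of $R$ but a unit of $Q$, Lemma \ref{t8} applied to $R\subseteq Q$ gives $\mathbb{I}_R(pR)=pRp^{-1}\cap R$, the required maximal subring. Taking the contrapositive, a principal ideal domain with no maximal subring is right duo, and symmetrically left duo, which is (b).

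It remains to prove (a): a prime principal ideal ring with no maximal subring is a domain. Suppose not; then $R$ is a prime principal ideal ring that is not a domain. A prime principal ideal ring is a hereditary Noetherian prime ring, so choosing $0\neq a\in R$ with $r.ann_R(a)\neq 0$ (possible since $R$ is not a domain), the right ideal $aR$ (being a right ideal of a right hereditary ring) is projective, so the surjection $R_R\to aR$, $r\mapsto ar$, with kernel $r.ann_R(a)$, splits; hence $r.ann_R(a)=eR$ for an idempotent $e$, and $e\neq 0,1$ since $0\neq a$ while $ae=0$. Thus $R$ has a nontrivial idempotent $e$, with $(1-e)Re\neq 0$ by primeness (if $(1-e)Re=0$ then $R(1-e)R\cdot ReR=0$, forcing $e=0$ or $e=1$). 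The goal is now to show that such an $R$ has a maximal subring, which contradicts the hypothesis and forces $R$ to be a domain. This is the main obstacle of the whole proof. When $e$ and $1-e$ are equivalent idempotents, $R\cong\mathbb{M}_2(eRe)$ and part (4) of Theorem \ref{t1} applies at once. In general I would pass to the triangular subring $R'=eRe\oplus eR(1-e)\oplus(1-e)R(1-e)$, which is a proper subring of $R$ exactly because $(1-e)Re\neq 0$, and show that $R$ is module-finite over $R'$: the quotient $R/R'$ is $(1-e)Re$, and its finite generation over $R'$ should follow from the principal ideal hypothesis, which forces the relevant one-sided modules to be cyclic. Granting this, the Zorn's-lemma remark of Section 2 yields a maximal subring of $R$ containing $R'$. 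Establishing the module-finiteness — or, alternatively, reducing a general prime principal ideal ring to a matrix ring $\mathbb{M}_n(V)$ over a principal ideal domain $V$ via idealizer theory, so that part (4) of Theorem \ref{t1} applies — is where the real work lies.
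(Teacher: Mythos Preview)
Your argument for the ``moreover'' part is sound in spirit and close to the paper's, though the paper takes a slightly shorter route: rather than descending through factorizations, it observes that if a principal ideal domain $R$ is not duo then it is not quasi duo, so some maximal right (or left) ideal fails to be two-sided; in a right PID every maximal right ideal has the form $pR$ for an atom $p$, and then Theorem~\ref{t1}(1) together with Lemma~\ref{t8} yields the maximal subring $pRp^{-1}\cap R$ directly. Your iteration ``one of $bR$, $cR$ is non-two-sided'' is correct, but invoking the ACC on principal \emph{right} ideals alone does not obviously terminate the process (when $cR$ is the offender you do not get $aR\subsetneq cR$); what you really want is that a two-sided PID is atomic, so that the single product inequality $Rp_1\cdots p_n\subseteq p_1Rp_2\cdots p_n\subseteq\cdots\subseteq p_1\cdots p_nR$ finishes it in one stroke. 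The paper secures exactly this by citing \cite[Theorem~1.3.5]{cohnfir}.

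The genuine gap is in (a). You correctly isolate the obstacle --- a prime principal ideal ring which is not a domain must have a maximal subring --- but your triangular-subring plan is left unfinished, and the missing step (that $(1-e)Re$ is finitely generated over $R'$, or that $R$ is ring-finite over $R'$) does not follow from ``every one-sided ideal of $R$ is principal'': that hypothesis controls ideals of $R$, not modules over the corner rings $eRe$ and $(1-e)R(1-e)$, so there is no evident reason the argument closes. The paper bypasses this entirely by invoking Goldie's Principal Ideal Ring Theorem (\cite[p.~65]{faith}): a prime principal ideal ring is isomorphic to $\mathbb{M}_n(S)$ for some principal ideal Ore domain $S$. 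If $n>1$, Theorem~\ref{t1}(4) gives a maximal subring at once; if $n=1$, then $R$ is already a principal ideal domain and your part (b) applies. This is precisely the ``matrix-ring reduction'' you list as an alternative in your final sentence --- it is the right move, and it is a one-line citation rather than an open-ended construction.
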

\begin{proof}
First note that since $R$ is a prime principal ideal ring, by Goldie's Principal Ideal Ring Theorem, see \cite[P. 65]{faith}, $R\cong\mathbb{M}_n(S)$, where $S$ is a principal ideal (Ore) domain. Hence if $n>1$, then $R$ has a maximal subring by $(4)$ of Theorem \ref{t1}, and if $n=1$, then $R$ is a principal ideal (Ore) domain. Hence assume that $R$ is a principal ideal domain, we prove that either $R$ has a maximal subring or $R$ is a duo domain. To see this, assume that $R$ has no maximal subring, then $R$ is a quasi duo ring by $(1)$ of Theorem \ref{t1}. Hence $Max(R)=Max_l(R)=Max_r(R)$ and since $R$ is a principal ideal ring we immediately conclude that $Max(R)=\{Rp\ |\ p\in\mathcal{A}\}$, where $\mathcal{A}$ is the set of all atoms of $R$ (up to associate). Now by \cite[Theorem 1.3.5]{cohnfir}, we deduce that $R$ is a unique factorization domain. Hence if $a$ is a nonzero nonunit element of $R$, then $a=u_1p_1u_2\cdots u_np_nu_{n+1}$, where $u_i\in U(R)$ and $p_i\in\mathcal{A}$, for some natural number $n$. Since $R$ is a quasi duo ring we infer that $Rp=pR$, for each $p\in\mathcal{A}$, which immediately implies that $Ra=Ra$ and therefore $R$ is a duo ring and hence we are done for the first part. For the final part, if $R$ is not a duo domain, then we deduce that $R$ is not a quasi duo domain. We may assume that $R$ is not a right quasi domain. Thus $R$ has a maximal right ideal $M$ which is not an ideal of $R$. Since $R$ is a (right) principal ideal ring, we conclude that $M=pR$ and therefore we are done by $(1)$ of Theorem \ref{t1}, the previous lemma and the fact that $R$ is an Ore domain (note, each (right) principal ideal domain is an Ore domain).
\end{proof}

We need the following three results about the existence of maximal subrings in division rings from \cite{azdiv}.

\begin{cor}\label{t10}
Let $D$ be a division ring with center $F$. Then either $D$ has a maximal subring or each $\alpha\in D\setminus F$ is not algebraic over $F$.
\end{cor}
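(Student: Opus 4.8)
Corollary~\ref{t10}: Let $D$ be a division ring with center $F$. Then either $D$ has a maximal subring or each $\alpha\in D\setminus F$ is not algebraic over $F$.

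The plan is to argue by contraposition: suppose $D$ has no maximal subring but there exists some $\alpha\in D\setminus F$ which is algebraic over $F$; I will derive a contradiction by producing a maximal subring of $D$. The natural object to look at is the subfield (or subdivision ring) generated by $\alpha$ over $F$. Since $\alpha$ is algebraic over the central subfield $F$, the ring $F[\alpha]$ is a commutative domain that is finite-dimensional as an $F$-vector space, hence $F[\alpha]=F(\alpha)$ is a finite field extension of $F$ living inside $D$, and $\alpha\notin F$ means $[F(\alpha):F]=n>1$. The first key step is therefore: \emph{reduce to the centralizer}. Set $K=F(\alpha)$ and let $E=C_D(K)$ be the centralizer of $K$ in $D$. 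Then $E$ is a division subring of $D$ containing $K$ in its center, and by the Double Centralizer Theorem (or rather, the standard dimension count for centralizers over the center of a division ring — Noether–Jacobson type results) $E$ has finite left/right vector dimension over $K$ when $D$ is finite-dimensional over $F$; but in general $D$ need not be finite-dimensional over $F$, so I will instead work directly with $E$ as a division ring whose center contains $K$.

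The crucial step is then to invoke the earlier remark from Section~2: a ring $T$ with a division subring $D'\neq T$ and finite left/right vector dimension over $D'$ has a maximal subring. So it suffices to exhibit, inside $D$, a proper division subring $D'$ over which $D$ (or some intermediate division ring that itself produces a maximal subring of $D$) is finite-dimensional. Here is where I expect the main obstacle to lie: one does \emph{not} in general get that $D$ is finite-dimensional over $C_D(\alpha)$, so I cannot directly apply the finite-dimensionality criterion to the pair $C_D(\alpha)\subseteq D$. The fix is to use the structure of the division ring $E=C_D(\alpha)$ itself: $E$ contains the field $K=F(\alpha)$ which is \emph{not} central in $D$ but \emph{is} in the center of $E$... and more importantly, $\alpha\in K\subseteq C(E)$ while $\alpha\notin F=C(D)$. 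This does not immediately finish things either. The cleaner route, which I would actually pursue: since $\alpha\notin F$, there is $d\in D$ with $d\alpha\neq\alpha d$, i.e. $d\notin C_D(\alpha)$; then $C_D(\alpha)$ is a proper division subring of $D$. If $D$ happened to be finite-dimensional over $C_D(\alpha)$ we would be done by the Section~2 remark. When it is not, I would pass to the division subring $\Delta$ generated by $\alpha$ and $d$ over $F$ — a finitely generated division ring — and try to locate a maximal subring there and lift it; but lifting maximal subrings from a subring is not automatic.

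Given these difficulties, the honest expectation is that the real argument the author intends is shorter and uses Corollary~\ref{t5} or Theorem~\ref{t1}(3): a division ring $D$ with an algebraic element $\alpha\notin F$ admits, via $K=F(\alpha)$, a representation of $D$ as a (left or right) $K$-vector space, and one considers $\mathrm{End}_K({}_K D)$ or a suitable simple/primitive ring built from $D$ and $K$; since $[K:F]>1$ there is genuine non-commutativity forcing a proper subring of finite index somewhere. Concretely, the step I would carry out is: \textbf{(1)} show $\alpha$ algebraic over $F$, $\alpha\notin F$ gives a finite field extension $F\subsetneq K=F(\alpha)\subseteq D$; \textbf{(2)} observe $K$ is not contained in $F=C(D)$ so $K$ fails to be central, meaning $D\neq C_D(K)$... actually $K\subseteq C_D(K)$ and the question is whether $C_D(K)=D$, which fails iff $K\not\subseteq C(D)$, true here; \textbf{(3)} now $C_D(K)$ is a proper division subring and, by the Cartan–Brauer–Hua theorem, $K$ is not invariant under all inner automorphisms of $D$, so there is $x\in D^\times$ with $xKx^{-1}\neq K$, which after massaging gives a maximal \emph{right} ideal of some relevant ring that is not two-sided, hence a maximal subring via Theorem~\ref{t1}(1) and Lemma~\ref{t8}. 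The main obstacle, to be addressed with care, is step (3): turning the non-normality of $K$ in $D$ (guaranteed by Cartan–Brauer–Hua since $K\neq D$ and $K\not\subseteq F$, as $[K:F]>1$ means $K$ is not the center) into an actual maximal subring of $D$ of the conjugate form $xRx^{-1}\cap R$; this is exactly the mechanism already set up in Lemma~\ref{t8} and Theorem~\ref{t9}, so I would mirror that argument, choosing $x$ so that $xKx^{-1}\neq K$ and using finite-dimensionality of $K$ over $F$ to control the relevant index.
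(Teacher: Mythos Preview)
The paper does not actually prove this corollary; it is quoted from the reference \cite{azdiv}. So there is no ``paper's own proof'' to compare against, and your attempt must be judged on its own terms.

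Your opening move is exactly right: take $\alpha\in D\setminus F$ algebraic over $F$, set $K=F(\alpha)$ with $[K:F]=n>1$, and look at the centralizer $E=C_D(\alpha)=C_D(K)$, a proper division subring of $D$. The gap is that you then assert ``one does \emph{not} in general get that $D$ is finite-dimensional over $C_D(\alpha)$'' and abandon this line. That assertion is false, and it is precisely the fact you need. One always has
\[
[D:C_D(\alpha)]_l=[D:C_D(\alpha)]_r=[F(\alpha):F]=n,
\]
with no finiteness hypothesis on $\dim_F D$. One way to see this: view $D$ as a left $D\otimes_F K$-module via $(d\otimes k)\cdot x=dxk$. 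This module is simple (its $D$-submodules are left ideals of $D$) and faithful (a short induction on the length of a purported relation $\sum d_i x\alpha^i=0$, using the commutator substitution $x\mapsto yx$, forces all $d_i\in F$ and then contradicts $\deg_F\alpha=n$). Since $D\otimes_F K$ is free of rank $n$ as a left $D$-module, it is left Artinian; being primitive it is simple Artinian, so $D\otimes_F K\cong \mathbb{M}_m(E)$ with $E=\mathrm{End}_{D\otimes_F K}(D)\cong C_D(\alpha)$, and comparing $D$-lengths gives $m=n$. Hence $\dim_E D=n<\infty$.

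Once you have $[D:E]=n$ with $E\subsetneq D$ a division subring, the remark you already quoted from Section~2 (``if $T$ has a division subring $D'\neq T$ and $T$ has finite left/right vector dimension over $D'$, then $T$ has a maximal subring'') finishes the proof immediately. Your detours through Cartan--Brauer--Hua, Lemma~\ref{t8}, and conjugates $xKx^{-1}$ are not needed and, as written, do not arrive at a proof: you never explain how non-normality of $K$ produces a maximal subring of $D$ itself (Lemma~\ref{t8} and Theorem~\ref{t9} concern idealizers of maximal one-sided ideals, but a division ring has none to work with).
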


In other words, if a division ring $D$ with center $F$ has no maximal subring, then $F$ is algebraically closed in $D$. We will generalized this result to certain rings (such as prime rings) in the next section.

\begin{cor}\label{t11}
Let $D$ be a division ring with center $F$ which is algebraic over its center. Then $D$ has no maximal subrings if and only if $D=F$ is a field without maximal subrings.
\end{cor}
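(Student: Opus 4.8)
The plan is to read this statement off directly from Corollary~\ref{t10}, so the proof will be very short. The ``if'' direction is a tautology: if $D=F$ and $F$ is a field with no maximal subring, then $D$ has no maximal subring, and there is nothing to prove. All the content is in the ``only if'' direction.

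For the ``only if'' direction, I would suppose $D$ has no maximal subring and show $D=F$. By hypothesis $D$ is algebraic over its center $F$, so \emph{every} element of $D$ is algebraic over $F$; in particular, if $D\setminus F$ were nonempty, it would contain an element algebraic over $F$. On the other hand, Corollary~\ref{t10} applies to the division ring $D$ with center $F$, and since $D$ has no maximal subring it tells us that every $\alpha\in D\setminus F$ is \emph{not} algebraic over $F$ (equivalently, $F$ is algebraically closed in $D$). These two assertions are consistent only when $D\setminus F=\emptyset$, i.e. $D=F$. Hence $D=F$ is a field, and since $D$ has no maximal subring by assumption, it is a field without maximal subrings, as required.

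There is essentially no obstacle here beyond verifying that the hypotheses of Corollary~\ref{t10} are in force, namely that $D$ is a division ring and that $F$ is its center; both are given. The one modelling choice is whether to phrase the contradiction in terms of individual elements $\alpha\in D\setminus F$ or in terms of ``$F$ algebraically closed in $D$''; either formulation collapses immediately once combined with the standing hypothesis that $D$ is algebraic over $F$.
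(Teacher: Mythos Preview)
Your argument is correct. Note, however, that the paper does not actually supply its own proof of Corollary~\ref{t11}: it is one of three results (Corollaries~\ref{t10}--\ref{t12}) quoted without proof from the companion reference~\cite{azdiv}. Your derivation of Corollary~\ref{t11} from Corollary~\ref{t10} is exactly the natural one-line reduction, and it is almost certainly how the result is obtained in~\cite{azdiv} as well, since Corollary~\ref{t11} is visibly the specialization of Corollary~\ref{t10} to the case where $D$ is algebraic over $F$.
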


For the characterization of fields which have no maximal subrings we refer the reader to \cite{azkrf}.

\begin{cor}\label{t12}
Let $D$ be a non-commutative division ring with center $F$. Then either $D$ has a maximal subring or $dim_F(D)\geq |F|$.
\end{cor}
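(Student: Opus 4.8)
The plan is to prove the contrapositive-flavoured statement directly: assuming $D$ has no maximal subring, I will exhibit a commutative subfield of $D$ whose $F$-dimension is already at least $|F|$. Since $D$ is non-commutative we have $F=C(D)\subsetneq D$, so fix any $\alpha\in D\setminus F$. By Corollary \ref{t10}, an element of $D\setminus F$ cannot be algebraic over $F$, so $\alpha$ is transcendental over $F$. Because $F$ is central, the subring $F[\alpha]\subseteq D$ is commutative, and the evaluation homomorphism $F[x]\to F[\alpha]$ is an isomorphism onto $F[\alpha]$ (injectivity is exactly transcendence of $\alpha$).

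Next I would pass to the subfield of $D$ generated by $\alpha$. Since $D$ is a division ring, every nonzero element of the domain $F[\alpha]$ is invertible in $D$; and since $\alpha$ commutes with $F$ and with itself, the set $F(\alpha):=\{uv^{-1}\ |\ u,v\in F[\alpha],\ v\neq 0\}$ is closed under addition, multiplication and inversion (all the relevant elements pairwise commute, so the usual fraction arithmetic is valid), hence is a commutative subfield of $D$ containing $F$. The isomorphism $F[x]\cong F[\alpha]$ then extends to an isomorphism of the rational function field $F(x)$ onto $F(\alpha)$. Consequently
$$\dim_F D\ \geq\ \dim_F F(\alpha)\ =\ \dim_F F(x),$$
and the whole statement reduces to the purely field-theoretic estimate $\dim_F F(x)\geq |F|$.

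For the last step I would note two linear independencies in $F(x)$. First, the monomials $1,x,x^2,\dots$ are $F$-linearly independent, so $\dim_F F(x)\geq\aleph_0$; this already suffices when $|F|\leq\aleph_0$. Second, if $F$ is infinite, the family $\{(x-a)^{-1}\ |\ a\in F\}$ is $F$-linearly independent: in any finite relation $\sum_i c_i(x-a_i)^{-1}=0$ with the $a_i$ distinct, multiplying through by $\prod_j(x-a_j)$ and evaluating the resulting polynomial identity at $x=a_i$ forces $c_i\prod_{j\neq i}(a_i-a_j)=0$, hence $c_i=0$. So $\dim_F F(x)\geq|F|$ in this case as well, and in every case $\dim_F D\geq|F|$, as required. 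I do not anticipate a real obstacle here: the only points needing a little care are checking that $F(\alpha)$ is genuinely a field sitting inside $D$ (immediate, since everything in sight commutes and $D$ is a division ring) and the bookkeeping in the partial-fraction independence argument for infinite $F$.
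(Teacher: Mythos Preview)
Your argument is correct. The paper itself does not supply a proof of this corollary: it is one of three results quoted without proof from \cite{azdiv}, so there is no in-paper argument to compare against. Your route---use Corollary~\ref{t10} to force any $\alpha\in D\setminus F$ to be transcendental over $F$, embed $F(x)\cong F(\alpha)\subseteq D$, and then invoke the standard fact $\dim_F F(x)\geq|F|$ via the linear independence of $\{(x-a)^{-1}:a\in F\}$ (and of the monomials when $F$ is countable or finite)---is the natural one and goes through without difficulty. The only cosmetic point is that the finite-$F$ case is already covered by the monomial argument (giving $\dim_F D\geq\aleph_0>|F|$), so your two cases overlap harmlessly rather than partitioning.
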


Finally in this section we have some observation about the conductor ideals of a maximal subring of a ring. Let $R\subseteq T$ be a ring extension, then we have three type of conductors: $(R:T):=\{x\in T\ |\ TxT\subseteq R\}$, $(R:T)_l:=\{x\in T\ |\ Tx\subseteq R\}$, $(R:T)_r:=\{x\in T\ |\ xT\subseteq R\}$. In other words, $(R:T)$ is the largest common ideal between $R$ and $T$, $(R:T)_l$ (resp. $(R:T)_r$)  is the largest common left (resp. right) ideal between $R$ and $T$. It is clear that $(R:T)_l=r.ann_R((\frac{T}{R}))$ and therefore $(R:T)_l$ is an ideal of $R$. Similarly, $(R:T)_r=l.ann_R(\frac{T}{R})$ is an ideal of $R$. Finally note that $(R:T)_l(R:T)_r\subseteq (R:T)\subseteq (R:T)_l\cap (R:T)_r$. If $R$ is a maximal subring of a commutative ring $T$, then $(R:T)=(R:T)_l=(R:T)_r$ is a prime ideal of $R$, see \cite{frd}. We have the following for non-commutative ring from \cite[Lemma 2.1]{azcond}.

\begin{thm}\label{t13}
Let $R$ be a maximal subring of a ring $T$. Then $(R:T)_l$ and $(R:T)_r$ are prime ideals of $R$.
\end{thm}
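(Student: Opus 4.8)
The plan is to show that $(R:T)_l$ and $(R:T)_r$ are \emph{proper} two-sided ideals of $R$ and then to verify the ideal-theoretic primeness condition for each. That $(R:T)_l = r.ann_R(T/R)$ and $(R:T)_r = l.ann_R(T/R)$ are ideals of $R$ has already been recorded above; they are proper because $T\cdot 1_T = T\not\subseteq R$ and $1_T\cdot T = T\not\subseteq R$, so $1_T$ lies in neither. I will carry out the argument for $(R:T)_l$ in detail; the case of $(R:T)_r$ is the left--right mirror image (alternatively, one applies the $(R:T)_l$ statement to $R^{op}\subseteq T^{op}$). The useful reformulation is that $x\in (R:T)_l$ precisely when $Tx\subseteq R$, so $(R:T)_l$ is the largest left ideal of $T$ contained in $R$; hence for ideals $I,J$ of $R$ the inclusion $IJ\subseteq (R:T)_l$ is equivalent to $(TI)J\subseteq R$.

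The crux of the proof is the observation that, for any two-sided ideal $I$ of $R$, the additive subgroup $R+TI$ of $T$ is in fact a subring of $T$ containing $R$. Indeed $1_T\in R$, and multiplicativity is checked on the four products: $RR\subseteq R$; $R(TI)=(RT)I\subseteq TI$ since $RT\subseteq T$; $(TI)(TI)=T(ITI)\subseteq T(TI)=TI$ since $IT\subseteq T$; and $(TI)R=T(IR)\subseteq TI$ since $I$ is a right ideal of $R$. Thus $(R+TI)(R+TI)\subseteq R+TI$. Note that only the last of these uses the (right) ideal property of $I$ inside $R$; the rest follow from $R\subseteq T$ and $T$ being a ring.

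Now suppose $I,J$ are ideals of $R$ with $IJ\subseteq (R:T)_l$ but $J\not\subseteq (R:T)_l$; I claim $I\subseteq (R:T)_l$, i.e. $TI\subseteq R$. If $TI\not\subseteq R$, then $R\subsetneq R+TI\subseteq T$, so by maximality of $R$ in $T$ we get $R+TI=T$. Multiplying on the right by $J$ yields $TJ=(R+TI)J=RJ+(TI)J$, and here $RJ\subseteq J\subseteq R$ because $J$ is a left ideal of $R$, while $(TI)J=T(IJ)\subseteq T\,(R:T)_l\subseteq R$ by the defining property of $(R:T)_l$. Hence $TJ\subseteq R$, i.e. $J\subseteq (R:T)_l$, contradicting the hypothesis. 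Therefore $TI\subseteq R$, so $I\subseteq (R:T)_l$, which proves that $(R:T)_l$ is prime. For $(R:T)_r$ one argues symmetrically: if $IJ\subseteq (R:T)_r$ and $I\not\subseteq (R:T)_r$, consider the subring $R+JT$ (a subring exactly because $J$ is a two-sided ideal of $R$); if $JT\not\subseteq R$ then $R+JT=T$, whence $IT=IR+I(JT)\subseteq I+(IJ)T\subseteq R$, forcing $I\subseteq (R:T)_r$, a contradiction, so $J\subseteq (R:T)_r$.

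I expect the main obstacle to be identifying the right auxiliary subring. The naive attempts --- expanding elements via $R[\alpha]=T$ for $\alpha\in T\setminus R$ and tracking where the conductor sits, or forming $R+TI$ when $I$ is only a one-sided ideal --- do not yield a subring, since the relevant products fail to close up; it is precisely the two-sidedness of $I$ in $R$ that makes $R+TI$ multiplicatively closed. Once that subring is in hand, maximality of $R$ immediately forces $R+TI=T$, and the conductor inclusion then collapses $TJ$ (respectively $IT$) into $R$, completing the proof.
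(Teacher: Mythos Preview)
Your argument is correct. The key device---forming the overring $R+TI$ (respectively $R+JT$) when $I$ (respectively $J$) is a two-sided ideal of $R$, checking it is a subring, and then invoking maximality of $R$ to force it to equal $T$---is exactly the right idea, and your verification of the four product inclusions is clean. The deduction $TJ=(R+TI)J=RJ+T(IJ)\subseteq R$ then finishes the job for $(R:T)_l$, and the mirror argument handles $(R:T)_r$.

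As for comparison with the paper: the present paper does not give a proof of this statement at all. Theorem~\ref{t13} is quoted as \cite[Lemma~2.1]{azcond} and is used here only as background input, so there is no in-text argument to compare against. Your proof is the natural one and is presumably close in spirit to what appears in the cited reference; in any case it stands on its own.
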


\section{Maximal subring of rings integral over their centers}
In this section we investigate about the existence of maximal subring in rings which are integral over its centers, see \cite{blair}. Especially, algebraic $K$-algebras where $K$ is a field. We also obtain some observation about rings which are $J$-semisimple or the Jacobson radicals of them are nil. At first of this section we give two result about the existence of a maximal subring in a ring extension $R\subsetneq T$, i.e., we want to study whenever $R$ has a maximal subring, then under certain condition $T$ has a maximal subring and vice versa. We begin by the following which is a generalization \cite[Proposition 2.1]{azkrc} for commutative ring extension. The assumption that the integral closure of a central subring is a subring, in the next result, is similar to the extension of a central valuation subring $V$ of a division ring $D$ which is finite over its center, to a valuation for $D$, see \cite[Theorem 8.12]{marbo}. We remind the reader that, a subring $R$ of a ring $T$ is called a conch maximal subring, if there exists a unit $u\in T$ such that $u\in R$ but $u^{-1}\notin R$ and $R$ is maximal respect to this property. Now the following is in order.

\begin{thm}\label{t14}
Let $T$ be a ring which is integral over its center $C(T)$. If $A$ is a conch maximal subring of $C(T)$ and the integral closure of $A$ in $T$, say $B$, is a subring of $T$, then $T$ has a maximal subring. In fact, $T$ has a maximal subring $R\supseteq B$ and $R\cap C(T)=A$.
\end{thm}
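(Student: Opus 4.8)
The plan is to build the maximal subring of $T$ directly, by a Zorn's lemma argument inside the integral extension $C(T)\subseteq T$, using $u^{-1}$ as the element that must be kept out. Write $C=C(T)$. I would first record the elementary but crucial fact that $u^{-1}\notin B$: if $u^{-1}$ were integral over $A$, then multiplying a monic relation by a suitable power of $u$ and using $u\in A$ would force $u^{-1}\in A$, contradicting that $A$ is a conch maximal subring of $C$ with respect to $u$. Moreover, since $A$, being a conch maximal subring of $C$, is in particular a maximal subring of $C$, and $u^{-1}\in C\setminus A$, we have $C=A[u^{-1}]$; this identity will be used decisively.

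Next I would apply Zorn's lemma to the poset of all subrings $S$ of $T$ with $B\subseteq S$ and $u^{-1}\notin S$, ordered by inclusion. This poset is nonempty (it contains $B$) and closed under unions of chains, so it has a maximal element $R$. It then remains to check that $R\cap C=A$ and that $R$ is a maximal subring of $T$. The first is immediate: $R\cap C$ is a subring of $C$ containing $A$ (since $A\subseteq B\subseteq R$) and not containing $u^{-1}$, and because $A$ is maximal among subrings of $C$ avoiding $u^{-1}$, this forces $R\cap C=A$.

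The heart of the argument is that $R$ is a maximal subring of $T$, and the crux is the identity $B[u^{-1}]=T$. Since $u$ is central, $B[u^{-1}]=\bigcup_{k\geq 0}u^{-k}B$, and I would show that this set is precisely the integral closure of $C$ in $T$. One inclusion is a direct computation: for $b$ integral over $A$, each $u^{-k}b$ satisfies a monic relation over $C$. For the other inclusion, given $t\in T$ integral over $C$, one writes the coefficients of a monic relation for $t$ in the form $u^{-k}a_i$ with $a_i\in A$ (possible because $C=A[u^{-1}]$), and then checks that $s:=u^{k}t$ satisfies a monic relation over $A$; hence $s\in B$ and $t=u^{-k}s\in B[u^{-1}]$. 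As $T$ is integral over $C$, this integral closure is all of $T$, so $B[u^{-1}]=T$. Finally, for any $x\in T\setminus R$, the subring $R[x]$ strictly contains $R$ and contains $B$, so by the maximality of $R$ in the poset it cannot avoid $u^{-1}$, i.e. $u^{-1}\in R[x]$; therefore $R[x]\supseteq B[u^{-1}]=T$, so $R[x]=T$. Thus $R$ is a maximal subring of $T$ with $B\subseteq R$ and $R\cap C(T)=A$ (and $u^{-1}\notin R$, so $R$ is a conch maximal subring of $T$, giving the promised extension of $A$).

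I expect the main obstacle to be the identity $B[u^{-1}]=T$. One must verify that inverting the central unit $u$ is compatible with forming integral closures even in the non-commutative ring $T$; here the commutativity of $A$ and $C$ and the centrality of $u$ are exactly what make the denominator-clearing manipulations legitimate. One must also use the hypothesis ``$A$ is a conch maximal subring of $C$'' in its strong form $C=A[u^{-1}]$: without it, the final step $R[x]=T$ breaks down.
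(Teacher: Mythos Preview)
Your proof is correct and follows essentially the same route as the paper's: both arguments hinge on the identity $B[u^{-1}]=T$, established via the same denominator-clearing trick (write the coefficients of a monic relation for $t\in T$ as $a_iu^{-m}$ with $a_i\in A$, multiply through by $u^{mn}$, and conclude $u^{m}t\in B$), followed by Zorn's lemma to produce a maximal subring $R\supseteq B$ with $u^{-1}\notin R$, and the same maximality argument for $R\cap C=A$. The only cosmetic difference is that the paper invokes the standard Zorn's-lemma fact from its preliminaries to pass directly from $B[u^{-1}]=T$ to the existence of a maximal subring, whereas you unpack that step and verify maximality of $R$ explicitly; your detour through ``$B[u^{-1}]$ equals the integral closure of $C$ in $T$'' is correct but the first inclusion is not needed, since that integral closure is already all of $T$.
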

\begin{proof}
Since $A$ is a conch maximal subring of $T$, we infer that there exists a unit $\alpha\in C:=C(T)$ such that $\alpha\in A$ and $\alpha^{-1}\notin A$. Hence $C=A[\alpha^{-1}]$, for $A$ is a maximal subring of $C$. Therefore each element of $C$ is of the form $a\alpha^{-k}$ for some $a\in A$ and $k\geq 0$ (note, $\alpha\in C$). It is clear that $\alpha^{-1}\notin B$, for $A$ is integrally closed in $C$ and $\alpha^{-1}\in C\setminus A$. We prove that $B[\alpha^{-1}]=T$. Assume that $x\in T$, since $x$ is integral over $C$, we conclude that there exist $n$ and $c_0,\ldots, c_{n-1}$ in $C$ such that $x^{n}+c_{n-1}x^{n-1}+\cdots+c_1x+c_0=0$. Since $C=A[\alpha^{-1}]$, we deduce that there exist $a_i\in A$ and $m\geq 0$ such that $c_i=a_i\alpha^{-m}$. Therefore $x^{n}+a_{n-1}\alpha^{-m}x^{n-1}+\cdots+a_1\alpha^{-m}+a_0\alpha^{-m}=0$. Multiplying this equation by $\alpha^{mn}$ (note, $\alpha\in C$), we obtain that $(\alpha^{m}x)^n+a_{n-1}(\alpha^m x)^{n-1}+a_{n-2}\alpha^{m}(\alpha^mx)^{n-2}+\cdots+a_1\alpha^{m(n-2)}(\alpha^mx)+a_0\alpha^{m(n-1)}=0$. Therefore $\alpha^mx\in B$. Hence $x\in B[\alpha^{-1}]$, which shows that $B[\alpha^{-1}]=T$. Thus by a natural use of Zorn's Lemma, $T$ has a maximal subring $R$ such that $B\subseteq R$ and $\alpha^{-1}\notin R$. Finally, note that $R\cap C$ is a subring of $C$ which contains $A$ but not $\alpha^{-1}$. Thus $R\cap C=A$, by maximality of $A$ and we are done.
\end{proof}

\begin{cor}\label{t15}
Let $T$ be a simple ring which is integral over its center. If the center of $T$ has a non-field maximal subring $A$ which the integral closure of $A$ in $T$ is a subring of $T$, then $T$ has a maximal subring.
\end{cor}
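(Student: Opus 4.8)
The plan is to deduce this corollary from Theorem~\ref{t14} by checking that its hypotheses are met. So suppose $T$ is a simple ring which is integral over its center $C:=C(T)$, and suppose $A$ is a maximal subring of $C$ which is not a field, and the integral closure $B$ of $A$ in $T$ is a subring of $T$. The only gap between this setup and Theorem~\ref{t14} is that Theorem~\ref{t14} requires $A$ to be a \emph{conch} maximal subring of $C$, i.e. requires that there exist a unit $u\in C$ with $u\in A$ but $u^{-1}\notin A$. So the heart of the argument is to show: a maximal subring $A$ of $C$ that is not a field must be conch (as a subring of $C$).

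The key point is that $C$ itself is a field. Indeed, $T$ is simple, so its center $C$ is a field (the center of a simple ring is a field: any nonzero central element generates a nonzero two-sided ideal, hence all of $T$, so it is invertible and its inverse is again central). Now I invoke the characterization of maximal subrings of fields from \cite{azkrf}: if $A$ is a maximal subring of a field $C$ and $A$ is not itself a field, then $A$ must be conch in $C$. (Concretely, if $A$ is not a field, pick $0\neq a\in A$ with $a^{-1}\notin A$; since $C=A[a^{-1}]$ by maximality, this already exhibits the conch unit — $a\in U(C)$ because $C$ is a field, $a\in A$, and $a^{-1}\notin A$. So $A$ is a conch maximal subring of $C$ with conch element $a$.) Thus the remaining hypothesis of Theorem~\ref{t14} holds.

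Having verified that $A$ is a conch maximal subring of $C(T)$ and that the integral closure $B$ of $A$ in $T$ is a subring of $T$, Theorem~\ref{t14} applies verbatim and yields a maximal subring $R$ of $T$ with $B\subseteq R$ and $R\cap C(T)=A$. In particular $T$ has a maximal subring, which is the assertion.

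I do not expect any real obstacle here: the corollary is essentially a specialization of Theorem~\ref{t14}, and the only substantive observation is that over the field $C(T)$ a non-field maximal subring is automatically conch, which is immediate from the definition of ``not a field'' together with the maximality (no external citation is strictly needed, though \cite{azkrf} records this). The one point to state carefully is why $C(T)$ is a field, which uses simplicity of $T$; this is standard and short.
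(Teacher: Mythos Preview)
Your proposal is correct and follows essentially the same route as the paper: observe that $C(T)$ is a field because $T$ is simple, so any non-field maximal subring $A$ of $C(T)$ contains a nonzero non-unit $u$ with $u^{-1}\in C(T)\setminus A$, making $A$ conch, and then apply Theorem~\ref{t14}. The paper's proof is terser but identical in substance.
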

\begin{proof}
It is clear that $C(T)$ is a field, for $T$ is a simple ring. Since $A$ is a maximal subring of $C(T)$ which is not a field, we infer that there exists $u\in A$ such that $u^{-1}\notin A$. Thus $A$ is a conch maximal subring of $C(T)$ and hence we are done by Theorem \ref{t14}.
\end{proof}

In the next result we see that under certain conditions the existence of a maximal subring in a ring $T$ inherits to some subrings of $T$. This result is a generalization of \cite[Theorem 2.19]{azkrmc}.

\begin{prop}\label{t16}
Let $R\subseteq T$ be an extension of rings. Assume that $V$ is a maximal subring of $T$, $x\in U(R)\setminus V$ and $xV=Vx$ (in particular, if $V$ is a duo ring). If $x^{-1}\in V$ (in particular, if $V$ is left integrally closed in $T$), then $R$ has a maximal subring.
\end{prop}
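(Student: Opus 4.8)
The plan is to exhibit, inside $R$, a proper subring $W$ together with an element that generates $R$ over $W$ as a ring; the existence of a maximal subring of $R$ then follows immediately from the Zorn's lemma observation recorded at the beginning of Section~2. The natural candidate is the contraction $W:=V\cap R$. Since $V$ is a maximal subring of $T$ and $x\in T\setminus V$, we have $V[x]=T$; and since $x\in R\setminus V$, the subring $W=V\cap R$ is properly contained in $R$. Thus the whole argument reduces to the single claim $(V\cap R)[x]=R$.

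To prove this, I would first pin down the shape of the elements of $T$. Because $xV=Vx$, in any word in $V\cup\{x\}$ one can move every occurrence of $x$ to the right, replacing the elements of $V$ en route by other elements of $V$; hence $V[x]=\sum_{i\ge 0}Vx^i$, and $T=V[x]$ says that every $r\in R$ can be written as $r=\sum_{i=0}^{n}v_i x^i$ with $v_i\in V$. The key step is then to multiply on the right by $x^{-n}$, which is legitimate because $x\in U(R)$ gives $x^{-1}\in R$ while the hypothesis gives $x^{-1}\in V$:
\[
rx^{-n}=\sum_{i=0}^{n}v_i\,x^{\,i-n}.
\]
For $0\le i\le n$ the exponent $i-n$ is $\le 0$, so $x^{\,i-n}=(x^{-1})^{\,n-i}\in V$ since $x^{-1}\in V$, whence $v_i x^{\,i-n}\in V$ and $rx^{-n}\in V$; but also $rx^{-n}\in R$, since $r,x^{-1}\in R$. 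Therefore $rx^{-n}\in V\cap R=W$ and $r=(rx^{-n})x^{n}\in W[x]$. As $r$ was arbitrary in $R$, we get $R\subseteq W[x]\subseteq R$, so $W[x]=R$ with $W\subsetneq R$, and the observation from Section~2 (applied with $W,x$ in place of $R,\alpha$) produces a maximal subring of $R$ (in fact one containing $W$ and avoiding $x$).

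Finally I would dispatch the two parenthetical sufficiency clauses. If $V$ is a duo ring and $x^{-1}\in V$, then $Vx^{-1}$ and $x^{-1}V$ are a left and a right ideal of $V$ inside $V$, hence both two-sided, so they coincide with the two-sided ideal of $V$ generated by $x^{-1}$; thus $x^{-1}V=Vx^{-1}$, and multiplying this equality on both sides by $x$ yields $xV=Vx$. If $V$ is left integrally closed in $T$, then $x^{-1}\in V$: otherwise maximality of $V$ forces $V[x^{-1}]=T\ni x$, and writing $x=\sum_{i=0}^{m}w_i x^{-i}$ with $w_i\in V$ (legitimate since $x^{-1}V=Vx^{-1}$, a consequence of $xV=Vx$) and multiplying on the right by $x^{m}$ gives $x^{m+1}-w_0x^m-\cdots-w_{m-1}x-w_m=0$, a left monic relation of positive degree for $x$ over $V$, forcing $x\in V$ and contradicting $x\notin V$. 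The whole argument is short; the one point that repays a little care is this last implication, where one must notice that $x^{-1}\notin V$ forces $V[x^{-1}]=T$ and then extract a genuinely \emph{left} monic equation for $x$ itself from the resulting expression. The roles of the hypotheses are then transparent: $xV=Vx$ fixes the form $T=\sum Vx^i$, $x^{-1}\in V$ drops all non-positive powers of $x$ into $V$, and $x^{-1}\in R$ — free from $x\in U(R)$ — keeps the computation inside $R$.
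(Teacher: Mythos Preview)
Your proof is correct and follows essentially the same line as the paper's: set $W=V\cap R$, use $xV=Vx$ to write any $r\in R$ as $\sum v_ix^i$ with $v_i\in V$, multiply by $x^{-n}$ (which lies in both $V$ and $R$) to force $rx^{-n}\in V\cap R$, and conclude $W[x]=R$ with $W\subsetneq R$. The only cosmetic differences are that the paper first compresses each element of $T$ to the form $vu^{-n}$ (with $u=x^{-1}$) before intersecting with $R$, and for the ``left integrally closed'' clause the paper directly shows $x^{-1}$ is left integral over $V$ rather than arguing by contradiction that $x$ would be.
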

\begin{proof}
First we prove that if $V$ is integrally closed in $T$, then $x^{-1}\in V$. To see this, since $x\notin V$ and $V$ is a maximal subring of $T$, we conclude that $T=V[x]$. From $xV=Vx$, we deduce that $T=V[x]=\{v_0+v_1x+\cdots+v_nx^n\ |\ n\in\mathbb{N}\cup\{0\},\ v_i\in V, \ 0\leq i\leq n\}$. Hence $x^{-1}=v_0+v_1x+\cdots+v_nx^n$, for some $n$ and $v_i\in V$. Multiplying from right by $x^{-n}$, we obtain that $x^{-1}$ is left integral over $V$ and therefore $x^{-1}\in V$. Thus we may assume that $R$ has a unit $u$ such that $u^{-1}\notin V$, $u\in V$ and $uV=Vu$. Hence by maximality of $V$, we have $V[u^{-1}]=T$. From $uV=Vu$ we conclude that $u^{-1}V=Vu^{-1}$, and therefore $T=V[u^{-1}]=V+Vu^{-1}+Vu^{-2}+\cdots=\{v_0+v_1u^{-1}+\cdots+v_n(u^{-1})^n\ |\ n\in\mathbb{N}\cup\{0\}$. Since $u\in V$, we deduce that $T=\{vu^{-n}\ |\ v\in V, n\geq 0\}$. Now we claim that $R\cap V$ is a proper subring of $R$ and $(R\cap V)[u^{-1}]=R$. It is clear that $R\cap V$ is a subring of $R$. Since $u\in U(R)$ and $u^{-1}\notin V$, we infer that $R\cap V$ is a proper subring of $R$. Let $r\in R$, then $r\in T$ and therefore there exist $v\in V$ and $n\geq 0$ such that $r=vu^{-n}$. Thus $v=ru^n\in R\cap V$. Therefore $r\in (R\cap V)[u^{-1}]$. Thus $(R\cap V)[u^{-1}]=R$, and since $R\cap V$ is a proper subring of $R$, by a natural use of Zorn's Lemma we conclude that $R$ has a maximal subring.
\end{proof}

\begin{rem}\label{t17}
Let $D$ be a division ring and $V$ be a (proper) DVR (see \cite[Section 3.5]{hazw}) for $D$. Then $V$ is a maximal subring of $D$. To see this we must show that for each $x\in D\setminus R$, we have $V[x]=D$. Since $V$ is a DVR for $D$, we infer that there exists a nonzero non-unit $\pi\in V$ such that each element $x$ of $D$ is of the form $x=u\pi^{n}=\pi^nv$, where $n\in\mathbb{Z}$ and $u,v\in U(V)$. This immediately implies that $V[\pi^{-1}]=D$. Since $x\in D\setminus V$, we conclude that $x=u\pi^n$, where $n<0$ and $u\in U(V)$. From $\pi\in V$ and $u\in U(V)$ we conclude that $T=V[\pi^{-1}]\subseteq V[\pi^n]=V[x]$ and therefore $V[x]=D$. Thus $V$ is a maximal subring of $D$.
\end{rem}

\begin{cor}\label{t18}
Let $R$ be a left/right Ore domain with division ring of quotients $D$. Let $V$ be a (proper) DVR for $D$ such that $U(R)\nsubseteq V$, then $R$ has a maximal subring.
\end{cor}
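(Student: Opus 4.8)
The plan is to deduce this from Proposition \ref{t16} applied to the ring extension $R \subseteq D$, with the maximal subring of $D$ supplied by Remark \ref{t17}.

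First I would unpack the structure of the DVR $V \subseteq D$. By Remark \ref{t17} there is a nonzero nonunit $\pi \in V$ such that every nonzero $x \in D$ has the form $x = u\pi^n = \pi^n w$ with $n \in \mathbb{Z}$, $u, w \in U(V)$, and $x \in V$ exactly when $n \geq 0$. Taking $n = \pm 1$ in $u\pi^n = \pi^n w$ shows $\pi\, U(V)\, \pi^{-1} = U(V)$; applying this on each of the two pieces $U(V)$ and $\pi^k U(V)$ (for $k \geq 1$) of $V \setminus \{0\}$ then gives $\pi V \pi^{-1} = V$, so $\pi$ normalizes $V$ and hence $\pi^n V = V \pi^n$ for every $n \in \mathbb{Z}$. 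This normalization fact is the only step that needs a little care, but it is forced by the two-sided normal form $u\pi^n = \pi^n w$ built into the definition of a DVR for $D$, so I do not expect any real difficulty there.

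Next, using $U(R) \nsubseteq V$, I would choose $x \in U(R) \setminus V$; since $R \subseteq D$ we have $x \in U(D)$, and writing $x = u\pi^n$ with $u \in U(V)$ we must have $n < 0$, as otherwise $x = u\pi^n \in V$. Then $xV = uV\pi^n = V\pi^n$ and $Vx = Vu\pi^n = V\pi^n$, so $xV = Vx$; moreover $x^{-1} = \pi^{-n}u^{-1} = (\pi^{-n}u^{-1}\pi^{n})\pi^{-n}$ lies in $\pi V \subseteq V$, because conjugation by $\pi^{-n}$ preserves $U(V)$ and $-n > 0$. Thus all the hypotheses of Proposition \ref{t16} are in place.

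Finally I would invoke Proposition \ref{t16} with $T = D$: $V$ is a maximal subring of $T$ (Remark \ref{t17}), $x \in U(R) \setminus V$, $xV = Vx$, and $x^{-1} \in V$, so $R$ has a maximal subring (in fact, tracing through the proof of Proposition \ref{t16}, the subring $R \cap V$ works). This completes the argument.
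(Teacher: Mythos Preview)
Your proof is correct and follows essentially the same route as the paper: pick $x\in U(R)\setminus V$, verify the two hypotheses $xV=Vx$ and $x^{-1}\in V$, then invoke Proposition~\ref{t16} together with Remark~\ref{t17}. The only difference is cosmetic: the paper obtains $xV=Vx$ by citing that a DVR is a duo ring (\cite[Proposition 3.5.3]{hazw}) and obtains $x^{-1}\in V$ from integral closedness (equivalently, the total-valuation property), whereas you extract both facts by hand from the two-sided normal form $u\pi^{n}=\pi^{n}w$. One small caveat: your final parenthetical that ``the subring $R\cap V$ works'' overstates what Proposition~\ref{t16} gives---its proof only shows $(R\cap V)[x]=R$ and then appeals to Zorn's Lemma, so $R\cap V$ need not itself be maximal.
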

\begin{proof}
Let $x\in U(R)\setminus V$, since $V$ is integrally closed in $D$ (or note that one can easily see that a DVR is total valuation, i.e., for each $x\in D$, either $x\in V$ or $x^{-1}\in V$) we conclude that $x^{-1}\in V$. By \cite[Proposition 3.5.3]{hazw}, $V$ is a duo ring and therefore $xV=Vx$, hence we are done by Proposition \ref{t16} and Remark \ref{t17}.
\end{proof}

Now we have the following main result.

\begin{thm}\label{t19}
Let $T$ be a ring which is integral over its center and $J(T)=0$. Then either $T$ has a maximal subring or $T$ is a commutative ring.
\end{thm}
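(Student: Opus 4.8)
The plan is to assume that $T$ has no maximal subring and then exhibit $T$ as a subring of a product of fields. Since $T$ has no maximal subring, the remarks following Theorem \ref{t1} apply: $\mathcal{M} := Max(T) = Max_l(T) = Max_r(T)$, the quotient $T/M$ is a division ring for each $M \in \mathcal{M}$, and $J(T) = \bigcap_{M \in \mathcal{M}} M$. As $J(T) = 0$ by hypothesis, the canonical map yields an embedding $T \hookrightarrow \prod_{M \in \mathcal{M}} T/M$, so it suffices to prove that each $T/M$ is commutative, i.e.\ is a field.

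Fix $M \in \mathcal{M}$ and put $D := T/M$. I would first record two properties of $D$. (i) $D$ is algebraic over its center: the image of $C(T)$ in $D$ is a central subring of $D$ (central elements remain central modulo any ideal), and since $T$ is integral over $C(T)$ every element of $D$ satisfies a monic polynomial over this central subring; hence every element of $D$ is algebraic over $C(D)$. (ii) $D$ has no maximal subring: if $\pi : T \twoheadrightarrow D$ denotes the quotient map and $A$ were a maximal subring of $D$, then $\pi^{-1}(A)$ is a proper subring of $T$ containing $\ker\pi$ (as $0 \in A$), and a straightforward check shows $\pi^{-1}(A)$ is maximal in $T$ — contradicting the hypothesis. (This ``pullback of a maximal subring through a surjection'' argument is purely formal.)

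With (i) and (ii) in hand, Corollary \ref{t11} applies to the division ring $D$: being algebraic over its center and having no maximal subring, $D$ must coincide with its center, so $D$ is a field. (Equivalently one may invoke Corollary \ref{t10}: the center $F$ of $D$ is algebraically closed in $D$, while $D$ is algebraic over $F$, forcing $D = F$.) Consequently $T$ embeds into $\prod_{M \in \mathcal{M}} T/M$, a direct product of fields, and is therefore commutative.

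The genuinely structural content here is modest: the embedding into $\prod_{M} T/M$ uses only $J(T)=0$ together with the already-established description of rings without maximal subrings, and the reduction of ``no maximal subring'' to the quotients $D = T/M$ is routine. The one step that does real work is combining property (i) with the division-ring input of Corollary \ref{t11} (which itself rests on Corollary \ref{t10}); I expect the final write-up to be quite short.
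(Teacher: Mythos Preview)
Your proposal is correct and follows essentially the same approach as the paper: both arguments use the consequences of Theorem~\ref{t1} to reduce to the division-ring quotients $T/M$, observe that integrality over $C(T)$ passes to integrality (algebraicity) over the center of $T/M$, and then invoke Corollary~\ref{t10} (or equivalently Corollary~\ref{t11}) to force each such quotient to be a field. The only cosmetic difference is that the paper argues by contradiction from a single non-central element $\alpha$ and a single maximal ideal $M$ avoiding $\alpha\beta-\beta\alpha$, whereas you show uniformly that every $T/M$ is a field and conclude via the embedding $T\hookrightarrow\prod_{M}T/M$; the underlying idea is identical.
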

\begin{proof}
Assume that $T$ has no maximal subring, then by $(1)$ of Theorem \ref{t1}, we infer that $T$ is a quasi duo ring, and by the comments after Theorem \ref{t1}, we have $0=J(T)=\bigcap_{M\in Max(T)}M$; and for each maximal ideal $M$ of $T$, the ring $T/M$ is a division ring. Assume that $\alpha$ is a non-central element of $T$ (which is integral over the center of $T$). Hence there exists an element $\beta\in T$ such that $\alpha\beta-\beta\alpha\neq 0$. Since $J(T)=0$, we conclude that there exists a maximal ideal $M$ of $T$ such that $\alpha\beta-\beta\alpha\notin M$. Therefore $\alpha+M$ is not a central element of the division ring $T/M$. Clearly $\alpha+M$ remains integral over the center of $T/M$ and thus by Corollary \ref{t10}, we deduce that $T/M$ has a maximal subring. Thus $T$ has a maximal subring too which is absurd. Thus each element of $T$ (which is integral over the center of $T$) is central and hence $T$ is a commutative ring.
\end{proof}

Combining the previous result by some facts that we mentioned in the introduction of this paper, we obtain the next observation.

\begin{prop}\label{t20}
Let $T$ be a ring which is integral over its center. Then either $T$ has a maximal subring or $T$ is a quasi duo ring with $Max(T)=Max_r(T)=Max_l(T)$ that satisfies in the following conditions:
\begin{enumerate}
\item $T/J(T)$ is a commutative reduced Hilbert ring and $|T/J(T)|\leq 2^{2^{\aleph_0}}$.
\item $|Max(T)|\leq 2^{\aleph_0}$, and for each maximal ideal $M$ of $T$, $T/M$ is an absolutely algebraic field, in particular $|T/M|\leq {\aleph_0}$.
\item For any two distinct maximal ideal $M$ and $N$ of $T$, the fields $T/M$ and $T/N$ are not isomorphic.
\item If $J(T)=Nil_*(T)$, then for each prime ideal $P$ of $T$, the ring $T/P$ is an integral domain with $J(T/P)=0$, i.e., $P$ is an intersection of a family of maximal ideals of $T$.
\end{enumerate}
\end{prop}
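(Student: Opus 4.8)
The plan is to assume throughout that $T$ has no maximal subring and then deduce each of the four stated conditions, leaning heavily on Theorem~\ref{t19} and on the facts (2), (4), (5), (6), (7) quoted in the introduction. First I would invoke $(1)$ of Theorem~\ref{t1} to get that $T$ is a quasi duo ring, so that $Max(T)=Max_r(T)=Max_l(T)$ by the comments following Theorem~\ref{t1}; moreover $J(T)=\bigcap_{M\in Max(T)}M$ and each $T/M$ is a division ring. The key reduction is this: $\bar T:=T/J(T)$ is again integral over its center (the image of $C(T)$ lands in $C(\bar T)$ and integrality passes to quotients), it has no maximal subring (a maximal subring of $T/J(T)$ would pull back to one of $T$), and $J(\bar T)=0$. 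Hence Theorem~\ref{t19} applies to $\bar T$ and gives that $\bar T=T/J(T)$ is commutative. Being commutative with no maximal subring, $T/J(T)$ is a Hilbert ring by \cite[Corollary 3.5]{azkrmc}, and by Corollary~\ref{t2} it is reduced; this is half of~(1).

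For the cardinality bound in~(1): since $T/J(T)$ is commutative, reduced, and has no maximal subring, if $|T/J(T)|>2^{2^{\aleph_0}}$ then by \cite[Corollary 3.7]{azkrmc} it would have a maximal subring, a contradiction, so $|T/J(T)|\leq 2^{2^{\aleph_0}}$. For~(2), fix $M\in Max(T)$; then $T/M$ is a division ring which (being a homomorphic image of a quotient of $T/J(T)$, hence commutative) is a field, and it is algebraic over its prime subfield because $T$ is integral over $C(T)$ and this integrality descends to $T/M$ over the image of $C(T)$, which is itself algebraic over the prime field once we know $T/M$ has no maximal subring — indeed a field with no maximal subring must be absolutely algebraic by the characterization in \cite{azkrf} (equivalently, apply fact~(6): $U(T/M)=(T/M)\setminus\{0\}$ is integral over the prime subring, so every nonzero element is a root of a monic integer polynomial, forcing $T/M$ to be absolutely algebraic and hence countable). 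For the bound $|Max(T)|\leq 2^{\aleph_0}$: $T/J(T)$ embeds in $\prod_{M\in Max(T)}T/M$; if $|Max(T)|>2^{\aleph_0}$ one argues — via \cite[Theorem 2.23]{azconch} (the reduced case) together with $|T/M|\leq\aleph_0$ — that $T/J(T)$ would be forced to have a maximal subring; I would reconcile this with the stated value $2^{\aleph_0}$ using the same reduced-ring dichotomy, which gives $|Max(T)|\leq 2^{\aleph_0}$ when $|T/J(T)|\leq 2^{2^{\aleph_0}}$.

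For~(3): if $T/M\cong T/N$ as fields for distinct maximal ideals $M,N$, then the diagonal-type subring of $T/(M\cap N)\cong (T/M)\times(T/N)$ consisting of pairs $(a,\varphi(a))$ has index forcing a maximal subring of $T/(M\cap N)$, hence of $T$ (this is the standard "distinct primitive quotients must be non-isomorphic" argument; one can also phrase it via the general minimal-extension machinery). For~(4): assuming $J(T)=Nil_*(T)$, let $P$ be a prime ideal of $T$; then $T/P$ is integral over its center, is prime, and since $T$ has no maximal subring neither does $T/P$, so $T/P$ is commutative by Theorem~\ref{t19} applied after checking $J(T/P)=0$ — but $J(T/P)=0$ is exactly what we must prove. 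Here I would use that $Nil_*(T)=\bigcap\{P: P\text{ prime}\}$ and that $J(T)=Nil_*(T)$ means $T/J(T)$, already known commutative and Hilbert, has the property that every prime pulls back appropriately; concretely, in the commutative Hilbert ring $T/J(T)$ every prime is an intersection of maximal ideals, which transfers back to give that every prime $P\supseteq J(T)$ of $T$ is an intersection of maximal ideals, hence $J(T/P)=0$, and then $T/P$ is an integral domain by Corollary~\ref{t3} (each $R\cap M$ is completely prime) or directly by Theorem~\ref{t19}.

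The main obstacle I anticipate is pinning down the exact cardinal $|Max(T)|\leq 2^{\aleph_0}$ in~(2): the reduced-ring results quoted as facts~(6) and~(7) give clean statements at the threshold $2^{2^{\aleph_0}}$ for $|R|$ and at $2^{\aleph_0}$ for $|Max(R)|$ when $|R|=2^{2^{\aleph_0}}$, but extracting the bound $|Max(T)|\leq 2^{\aleph_0}$ for all $T/J(T)$ with $|T/J(T)|\leq 2^{2^{\aleph_0}}$ (not just at the extreme) requires care — one must combine $|T/M|\leq\aleph_0$ with the embedding $T/J(T)\hookrightarrow\prod_M T/M$ and show that too many factors would, by a counting/independence argument in the reduced commutative setting, manufacture a maximal subring. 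The rest is bookkeeping: repeatedly using that "no maximal subring" passes to quotients, that integrality over the center passes to quotients, and that Theorem~\ref{t19} plus the commutative Hilbert-ring theory then does the work.
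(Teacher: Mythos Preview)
Your overall strategy coincides with the paper's: assume $T$ has no maximal subring, get quasi duo from Theorem~\ref{t1}, apply Theorem~\ref{t19} to $T/J(T)$ to make it commutative, then pull in the commutative results. Parts~(3) and~(4) are handled just as in the paper (the paper cites \cite[(1) of Corollary 2.4]{azq} for~(3), which is exactly your diagonal-subring argument, and for~(4) uses $J(T)=Nil_*(T)\subseteq P$ together with $T/J(T)$ being a commutative Hilbert ring, as you do).

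The one genuine gap is precisely the obstacle you flag: your route to $|Max(T)|\leq 2^{\aleph_0}$ via the embedding $T/J(T)\hookrightarrow\prod_M T/M$ and \cite[Theorem 2.23]{azconch} does not work. That theorem only speaks at the exact threshold $|R|=2^{2^{\aleph_0}}$, and the embedding gives an \emph{upper} bound on $|T/J(T)|$ in terms of $|Max(T)|$, not the other way round; a large $|Max(T)|$ does not by itself force $T/J(T)$ to be large. The paper sidesteps this entirely by citing \cite[Proposition 2.6]{azkrf}, which directly gives $|Max(R)|\leq 2^{\aleph_0}$ for any commutative ring $R$ without a maximal subring. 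If you want a self-contained argument, combine your items~(2) and~(3): each $T/M$ is an absolutely algebraic field, distinct $M$ give non-isomorphic $T/M$, and there are only $2^{\aleph_0}$ isomorphism classes of absolutely algebraic fields (they are classified by a prime $p$ and a supernatural number). Note also that the paper reverses your order for the cardinality in~(1): it first obtains $|Max(T)|\leq 2^{\aleph_0}$ and $|T/M|\leq\aleph_0$, and \emph{then} reads off $|T/J(T)|\leq 2^{2^{\aleph_0}}$ from the embedding into $\prod_M T/M$, rather than invoking \cite[Corollary 3.7]{azkrmc}.
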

\begin{proof}
Assume that $T$ has no maximal subring, thus by $(1)$ of Theorem \ref{t1}, we conclude that $T$ is a quasi duo ring. Hence $Max(T)=Max_l(T)=Max_r(T)$. Clearly, the ring $T/J(T)$ is integral over the central subring $(C(T)+J(T))/J(T)$ and therefore is integral over its center. Since $T$ has no maximal subring we conclude that $T/J(T)$ has no maximal subring and therefore by Theorem \ref{t19}, we deduce that $T/J(T)$ is a commutative ring. Since for each $M\in Max(T)$ we have $J(T)\subseteq M$, by \cite[Proposition 2.6]{azkrf}, we deduce that $|Max(T)|\leq 2^{\aleph_0}$ and by \cite[Corollary 1.4]{azkrf}, $T/M$ is an absolutely algebraic field and therefore $|T/M|\leq {\aleph_0}$. Also note that $T/J(T)$ embeds in $\prod_{M\in Max(T)} T/M$ and therefore $T/J(T)$ is a reduced ring with $|T/J(T)|\leq 2^{2^{\aleph_0}}$. By \cite[Corollary 3.5]{azkrmc}, we conclude that $T/J(T)$ is a Hilbert ring. Thus $(1)$ and $(2)$ hold. $(3)$ is an immediate consequence of \cite[$(1)$ of Corollary 2.4]{azq}. Finally for $(4)$, since $Nil_*(T)$ is the intersection of all prime ideals of $T$, by the assumption $J(T)=Nil_*(T)$, we conclude that $J(R)$ is contained in each prime ideal of $T$. Now since $T/J(T)$ is a commutative Hilbert ring, we conclude that for each prime ideal $P$ of $T$, $T/P$ is an integral domain with $J(T/P)=0$.
\end{proof}

By the above results we have several consequences as follows.

\begin{cor}\label{t21}
Let $T$ be a ring which is integral over its center. If $K$ is a field which is a subring of $T$ and $K$ is not an absolutely algebraic field, then $T$ has a maximal subring.
\end{cor}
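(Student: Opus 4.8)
The plan is to argue by contradiction, using the structure theorem for rings integral over their centers with no maximal subring that was just proved (Proposition \ref{t20}) together with the classification of fields without maximal subrings coming from Corollary \ref{t10} and \cite{azkrf}. So suppose $T$ has no maximal subring. By $(1)$ of Theorem \ref{t1}, $T$ is quasi duo, and Proposition \ref{t20} then tells us that $Max(T)=Max_l(T)=Max_r(T)$ and, crucially, that for every maximal ideal $M$ of $T$ the quotient $T/M$ is an absolutely algebraic field.

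First I would record that $Max(T)\neq\emptyset$: any ring with $1\neq 0$ has a maximal two-sided ideal by a routine application of Zorn's Lemma, so fix some $M\in Max(T)$. Next I would look at the composite $K\hookrightarrow T\twoheadrightarrow T/M$. Since $M$ is a two-sided ideal of $T$ and $K$ is a subring, $K\cap M$ is a two-sided ideal of the field $K$, hence equals $0$ or $K$; it cannot equal $K$ because $1_T=1_K\notin M$. Therefore $K\cap M=0$, so $K\to T/M$ is injective and $K$ is realized as a subfield of $T/M$.

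The remaining step is the observation that a subfield of an absolutely algebraic field is again absolutely algebraic. Indeed $T/M$ is algebraic over its prime subfield, so $\mathrm{char}(T/M)=p>0$; since $K$ embeds in $T/M$ we get $\mathrm{char}(K)=p$ and the prime subfield $\mathbb{Z}_p$ of $K$ maps onto that of $T/M$, so every element of $K$ is algebraic over $\mathbb{Z}_p$. Thus $K$ is absolutely algebraic, contradicting the hypothesis, and we conclude that $T$ has a maximal subring.

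I do not expect a genuine obstacle here: essentially all of the work is already carried by Proposition \ref{t20} (whose proof invokes Corollary \ref{t10}, hence \cite{azdiv}, and the commutative results of \cite{azkrf} and \cite{azkrmc}). The only minor points requiring care are that $Max(T)$ is nonempty and that $K\cap M$ really is an ideal of the field $K$ (forcing it to be zero), both of which are immediate.
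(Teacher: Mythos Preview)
Your proof is correct and follows essentially the same route as the paper: assume $T$ has no maximal subring, invoke item~(2) of Proposition~\ref{t20} to get that every $T/M$ is an absolutely algebraic field, embed $K$ into some $T/M$, and derive a contradiction since subfields of absolutely algebraic fields are absolutely algebraic. The paper's version is terser (it asserts without comment that $T/M$ contains a copy of $K$), whereas you spell out why $K\cap M=0$ and why $Max(T)\neq\emptyset$, but the argument is the same.
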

\begin{proof}
If $T$ has no maximal subring, then by $(2)$ of Proposition \ref{t20}, $T/M$ is an absolutely algebraic field for each maximal ideal $M$ of $T$. Clearly $R/M$ contains a copy of $K$, which is not possible, for subfields of absolutely algebraic fields are absolutely algebraic. Hence we conclude that $T$ has a maximal subring.
\end{proof}

In particular, if $T$ is a ring which is integral over its center and $T$ contains a field $K$ such that either $K$ is uncountable or $Char(K)=0$, then $T$ has a maximal subring.
Moreover, if $T$ is a $K$-algebra over a field $K$ and $T=K[\{\alpha_i\ |\ i\in I\}]\neq K$, where $|I|<|K|$, then $T$ has a maximal subring. To see this, we have two cases. If $I$ is finite, then by a natural use of Zorn's Lemma one can easily see that $T$ has a maximal subring, otherwise assume that $I$ is infinite. Thus $K$ is uncountable, for $|I|<|K|$ and therefore by the latter fact we deduce that $T$ has a maximal subring.

\begin{cor}\label{t21a}
Let $T$ be a ring which is integral over its center. If $K$ is an infinite field which is a subring of $T$, then either $T$ has a maximal subring or there exists at most one maximal ideal $M$ of $T$ such that $v.dim({}_K(T/M))\neq |K|$ ($v.dim((T/M)_K)\neq |K|$). Moreover, if $v.dim({}_K(T/M))\neq |K|$ ($v.dim((T/M)_K)\neq |K|$), for a maximal ideal $M$ of $T$, then $T/M\cong K$.
\end{cor}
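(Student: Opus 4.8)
The plan is to assume that $T$ has no maximal subring and to deduce, from Proposition~\ref{t20}, that at most one maximal ideal can be exceptional. So suppose $T$ has no maximal subring. By $(1)$ of Proposition~\ref{t20}, $T$ is quasi duo and $Max(T)=Max_l(T)=Max_r(T)$; by $(2)$, for every $M\in Max(T)$ the quotient $T/M$ is a field with $|T/M|\leq\aleph_0$; and by $(3)$, distinct maximal ideals of $T$ yield non-isomorphic quotient fields. Since $K$ is a field with $1_K=1_T$, for each $M\in Max(T)$ the composite $K\hookrightarrow T\to T/M$ has kernel a proper ideal of $K$, hence kernel $0$; thus $K$ embeds in $T/M$, and since $K$ is infinite while $|T/M|\leq\aleph_0$ we get $|K|=|T/M|=\aleph_0$. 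Viewing $T/M$ as a left (equivalently right, as $K$ is central in the field $T/M$) $K$-vector space, $v.dim_K(T/M)$ is therefore either finite or equal to $\aleph_0=|K|$; so $v.dim_K(T/M)\neq|K|$ if and only if $v.dim_K(T/M)$ is finite.

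The key step is the claim that if some $M\in Max(T)$ satisfies $2\leq v.dim_K(T/M)<\infty$, then $T$ has a maximal subring, contrary to our standing assumption. Indeed, in that case $K\subsetneq T/M$ is a proper ring extension which is finite as a module over $K$, so by the remarks at the beginning of Section~2 the ring $T/M$ has a maximal subring $\bar R$ containing $K$. Let $\pi\colon T\to T/M$ be the canonical surjection and put $R:=\pi^{-1}(\bar R)$; then $M\subseteq R\subsetneq T$. For any $x\in T\setminus R$ we have $\pi(x)\in (T/M)\setminus\bar R$, hence $\pi(R[x])=\bar R[\pi(x)]=T/M$, so $R[x]+M=T$; since $M\subseteq R\subseteq R[x]$ this gives $R[x]=T$. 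Thus $R$ is a maximal subring of $T$, a contradiction.

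It follows that any $M\in Max(T)$ with $v.dim_K(T/M)\neq|K|$ must satisfy $v.dim_K(T/M)=1$, i.e. $T/M\cong K$, which proves the final ("moreover") assertion. Finally, if two distinct maximal ideals $M\neq N$ of $T$ both had $v.dim_K(T/M)\neq|K|$ and $v.dim_K(T/N)\neq|K|$, the previous sentence would force $T/M\cong K\cong T/N$, contradicting $(3)$ of Proposition~\ref{t20}. Hence at most one such $M$ exists.

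The main obstacle is the middle paragraph: one must localize the question to a single simple quotient $T/M$, recognize $K\subsetneq T/M$ there as a finite proper ring extension, and then lift a maximal subring of $T/M$ back to a maximal subring of $T$ through $\pi$ --- the lift is a short verification ($R[x]+M=T$ together with $M\subseteq R$), but it depends on the full strength of Proposition~\ref{t20} (commutativity and countability of the simple quotients of $T/J(T)$, and the fact that distinct maximal ideals give non-isomorphic quotient fields), so the care lies in assembling those ingredients correctly.
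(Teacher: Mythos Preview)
Your proof is correct and follows essentially the same approach as the paper: assume $T$ has no maximal subring, use Proposition~\ref{t20} to see each $T/M$ is a countable field containing a copy of $K$ (hence $|K|=\aleph_0$), conclude the $K$-dimension is either finite or $|K|$, rule out finite dimension $>1$ via the finite-extension criterion for maximal subrings, and invoke (3) of Proposition~\ref{t20} for uniqueness. The only cosmetic differences are that the paper cites Corollary~\ref{t21} to first deduce $K$ is absolutely algebraic (you instead get countability of $K$ directly from the embedding $K\hookrightarrow T/M$), and the paper leaves the lifting of a maximal subring from $T/M$ to $T$ implicit where you spell it out.
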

\begin{proof}
Assume that $T$ has no maximal subring, thus by previous corollary we deduce that $K$ is an absolutely algebraic field and therefore is countable. By $(2)$ of Proposition \ref{t20}, for each maximal ideal $M$ of $T$, $T/M$ is an absolutely algebraic field which clearly contains a copy of $K$. Since $K$ is infinite, we conclude that $v.dim({}_K(T/M))\leq |K|$ (note, otherwise $T/M$ is uncountable which is impossible). Now if the equality does not hold, we have $v.dim({}_K(T/M))$ is finite and since $T/M$ has no maximal subring, we deduce that $v.dim({}_K(T/M))=1$, (note, if $1<v.dim({}_K(T/M))=n$ then $K\subsetneq T/M$ is a finite ring extension and therefore $T/M$ has a maximal subring, hence $T$ has a maximal subring which is absurd), thus $T/M\cong K$. Finally note that if such $M$ exists, then $M$ is unique by $(3)$ of Proposition \ref{t20}.
\end{proof}

Also note that if $T$ is a $K$-algebra over a field $K$ and $v.dim_K(T)<|K|$, then $T$ has no maximal subring if and only if $T=K$ is an absolutely algebraic field without maximal subring. To see this, assume that $T$ has no maximal subring. Note that if $K$ is an uncountable field, then by Corollary \ref{t21}, we deduce that $T$ has a maximal subring which is absurd. Thus we conclude that $K$ is countable (and by Corollary \ref{t2}, is an absolutely algebraic field). Hence $v.dim_K(T)$ is finite. If $1<v.dim_K(T)$, then we observe that $K\subsetneq T$ is a finite ring extension and therefore $T$ has a maximal subring which is absurd. Thus $v.dim_K(T)=1$, i.e., $T=K$ is an absolutely algebraic field without maximal subring. The converse is evident.

\begin{cor}\label{t22}
Let $T$ be a ring which is integral over its center. Assume that $R$ is a subring of $T$ and there exists a maximal (left/right) ideal $M$ of $T$ such that $R\cap M$ is not maximal ideal of $R$ (in fact, $R/(R\cap M)$ is not an absolutely algebraic field), then $T$ has a maximal subring. In particular, if $T$ is a zero-dimensional ring which is integral over its prime subring and $R$ is a subring of $T$ which a non-field domain, then $T$ has a maximal subring.
\end{cor}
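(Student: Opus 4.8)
The plan is to prove the first (main) assertion by contraposition. So suppose $T$ has no maximal subring. By $(1)$ of Theorem~\ref{t1} the ring $T$ is quasi duo, hence, by the remarks following Theorem~\ref{t1}, every maximal left or right ideal of $T$ is a two-sided ideal and $Max(T)=Max_l(T)=Max_r(T)$. Since $T$ is integral over its center, $(2)$ of Proposition~\ref{t20} applies and gives that $T/M$ is an absolutely algebraic field for every $M\in Max(T)$. In particular the ideal $M$ in the statement, being a maximal one-sided ideal, is in fact a maximal two-sided ideal of $T$, and $T/M$ is an absolutely algebraic field.

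The one point that deserves attention is the following elementary observation: a unital subring $S$ of an absolutely algebraic field $F$ is again an absolutely algebraic field. Indeed $S$ is a domain and every $s\in S$ is algebraic over the prime field $\mathbb{F}_p$ of $F$, so for $s\neq 0$ the ring $\mathbb{F}_p[s]$ is a finite-dimensional commutative $\mathbb{F}_p$-algebra which is a domain, hence a finite field; therefore $s^{-1}\in \mathbb{F}_p[s]\subseteq S$, so $S$ is a field, and it is clearly algebraic over $\mathbb{F}_p$. Granting this, since $M$ is two-sided, $R+M$ is a subring of $T$ and $R\cap M$ an ideal of $R$, and the standard isomorphism $R/(R\cap M)\cong (R+M)/M$ realizes $R/(R\cap M)$ as a unital subring of the absolutely algebraic field $T/M$. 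Hence $R/(R\cap M)$ is an absolutely algebraic field, so $R\cap M$ is a maximal ideal of $R$ --- contradicting the hypothesis. This proves the main assertion.

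For the ``in particular'' clause I would first show $Char(T)>0$: if $Char(T)=0$, the prime subring is a copy of $\mathbb{Z}$ lying inside $C(T)$, the extension $\mathbb{Z}\subseteq C(T)$ is integral, and combining commutative going-up for $\mathbb{Z}\subseteq C(T)$ with the lying-over/going-up results of \cite{blair} for the integral central extension $C(T)\subseteq T$ gives $dim(T)=dim(C(T))\geq dim(\mathbb{Z})=1$, contradicting zero-dimensionality. Hence $Char(T)=n>0$ and the prime subring is $\mathbb{Z}_n$; since $R$ is a domain containing $\mathbb{Z}_n$, the integer $n=p$ is prime, so $T$ --- and therefore $R$ --- is integral, equivalently algebraic, over $\mathbb{F}_p$. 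Then for each $0\neq r\in R$ the ring $\mathbb{F}_p[r]$ is a finite domain, hence a field, so $r\in U(R)$; thus $R$ is a division ring algebraic over $\mathbb{F}_p$, hence commutative by Jacobson's theorem, i.e.\ $R$ is a field. This contradicts the assumption that $R$ is a non-field domain, so no subring $R$ of $T$ as in the hypothesis exists and the implication is (vacuously) valid; the reader may prefer to read this clause as the remark that a zero-dimensional ring which is integral over its prime subring has no non-field domain subrings.

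I do not expect a real obstacle here: the argument is essentially an assembly of Theorem~\ref{t1}$(1)$, Proposition~\ref{t20}$(2)$, and the one-line fact that a unital subring of an absolutely algebraic field is an absolutely algebraic field. The only subtle point is the use of quasi-duoness to guarantee that the given maximal one-sided ideal $M$ is two-sided, which is exactly what makes the identification $R/(R\cap M)\cong (R+M)/M$ inside $T/M$ legitimate.
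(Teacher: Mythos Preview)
Your proof of the main assertion is essentially the paper's: both argue by contraposition, invoke Proposition~\ref{t20}(2) to get that $T/M$ is an absolutely algebraic field, and then use that a unital subring of an absolutely algebraic field is again such a field to conclude that $R\cap M$ is maximal in $R$.

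For the ``in particular'' clause, however, you take a genuinely different route. The paper simply observes that $R\setminus\{0\}$ is an $m$-system in $T$, picks a prime ideal $Q$ of $T$ disjoint from it (so $Q\cap R=0$), notes that $Q$ is maximal by zero-dimensionality, and applies the first part with $M=Q$ (since $R$ is not a field, $0=Q\cap R$ is not maximal in $R$). Your approach instead argues directly that no non-field domain subring $R$ can exist --- by forcing $Char(T)=p>0$, concluding that any domain subring $R$ is an algebraic $\mathbb{F}_p$-algebra and hence a field via Jacobson --- so the implication holds vacuously. Both are correct; your argument actually establishes the stronger fact that the hypothesis of the clause is never satisfied, while the paper's argument is shorter and illustrates the intended use of the first part as a criterion. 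One small caveat: your $Char(T)>0$ step leans on a going-up statement for the central integral extension $C(T)\subseteq T$ that you attribute to \cite{blair}; the paper itself only cites lying-over from that source, so you may want to supply a precise reference or give a direct argument (e.g.\ take a prime $Q$ of $T$ with $Q\cap\mathbb{Z}=0$, note $Q$ is maximal by zero-dimensionality so $C(T/Q)$ is a field containing $\mathbb{Z}$, and derive a contradiction from $1/2\in C(T/Q)$ being integral over $\mathbb{Z}$).
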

\begin{proof}
Assume that $T$ has no maximal subring, thus by $(2)$ of proposition \ref{t20}, $T/M$ is an absolutely algebraic field. Since $(R+M)/M$ is a subring of $T/M$, we conclude that $(R+M)/M$ is a field too. Thus $R/(R\cap M)$ is a field, i.e., $R\cap M$ is a maximal ideal of $R$ which is impossible (in fact since subrings of absolutely algebraic fields are absolutely algebraic, $R/(R\cap M)$ is an absolutely algebraic field). Hence $T$ has a maximal subring. For the second part, note that $R\setminus\{0\}$ is a multiplicatively closed set in $T$, thus $T$ has a prime ideal $Q$ such that $Q\cap R=0$. Since $T$ is a zero-dimensional ring, we infer that $Q$ is a maximal ideal of $T$, hence we are done by the first part.
\end{proof}

In \cite[Corollary 3.5]{azkrmc}, the authors proved that a commutative ring $R$ either has a maximal subring or is a Hilbert ring. We remind that a commutative ring $R$ is a Hilbert ring if and only if for each proper ideal $I$ of $R$ we have $N(R/I)=J(R/I)$. For algebraic $K$-algebra we have the following same result.

\begin{cor}\label{t23}
Let $T$ be an algebraic $K$-algebra, where $K$ is a field. Then either $T$ has a maximal subring or $(1)-(4)$ of Proposition \ref{t20}, hold for $T$. Moreover, if $T$ has no maximal subring, then $K$ is an absolutely algebraic field and for each proper ideal $I$ of $T$, $N(T/I)=J(T/I)$. In particular, if in addition $T$ is a reduced ring, then either $T$ has a maximal subring or $T$ is a commutative Hilbert ring.
\end{cor}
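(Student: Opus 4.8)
The plan is to reduce the statement to Proposition \ref{t20}, Corollary \ref{t2} and Theorem \ref{t19} by the elementary observation that an algebraic $K$-algebra $T$ is integral over its center. Indeed, every $t\in T$ is a root of a nonzero polynomial over $K$, and dividing by the invertible leading coefficient we may take this polynomial monic; since $K\subseteq C(T)$, this exhibits $t$ as integral over $C(T)$. Hence Proposition \ref{t20} applies to $T$ and immediately gives the dichotomy ``$T$ has a maximal subring, or $(1)$--$(4)$ of Proposition \ref{t20} hold''. Moreover, if $T$ has no maximal subring, then Corollary \ref{t21} (read contrapositively) forces the subfield $K$ of $T$ to be absolutely algebraic; equivalently one can invoke $(2)$ of Proposition \ref{t20} together with the fact that $K$ embeds into $T/M$ for any maximal ideal $M$ of $T$, because $K\cap M=0$.

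Next I would establish $N(T/I)=J(T/I)$ for every proper ideal $I$ of $T$, assuming $T$ has no maximal subring; the argument splits into two halves. First, $T/I$ has no maximal subring: if $\pi:T\to T/I$ is the canonical map and $V$ were a maximal subring of $T/I$, then $\pi^{-1}(V)$ would be a maximal subring of $T$ --- it is a proper subring since $\pi$ is onto and $V\neq T/I$, and for a subring $R$ with $\pi^{-1}(V)\subseteq R\subseteq T$ one has $\pi(R)=V$ or $\pi(R)=T/I$, giving $R=\pi^{-1}(V)$ in the first case and $R=T$ in the second (as $R$ contains $\ker\pi=I$). Hence Corollary \ref{t2} yields $N(T/I)\subseteq J(T/I)$. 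Second, $T/I$ is again an algebraic $K$-algebra (here $K\cap I=0$ since $I$ is proper, so $K$ embeds in $T/I$), and in an algebraic algebra over a field the Jacobson radical is nil: given $x\in J(T/I)$, pick a monic polynomial $x^{n}+a_{n-1}x^{n-1}+\cdots+a_{1}x+a_{0}=0$ over $K$; if $a_{0}\neq 0$ then $x$ is a unit, with two-sided inverse $-a_{0}^{-1}(x^{n-1}+\cdots+a_{1})$ (a polynomial in $x$), contradicting $x\in J(T/I)$, so $a_{0}=0$; letting $j\ge 1$ be the least index with $a_{j}\neq 0$ (possibly $j=n$) we get $x^{j}(x^{n-j}+\cdots+a_{j})=0$, where the bracketed factor equals $a_{j}$ plus an element of $J(T/I)$, hence a unit, so $x^{j}=0$. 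This gives $J(T/I)\subseteq N(T/I)$, and the two inclusions together yield $N(T/I)=J(T/I)$ --- exactly the non-commutative analogue of the Hilbert-ring condition recalled before the statement.

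Finally, if $T$ is in addition reduced and has no maximal subring, then $N(T)=0$; taking $I=0$ in the previous step gives $J(T)=0$, so Theorem \ref{t19} forces $T$ to be commutative, and then \cite[Corollary 3.5]{azkrmc} shows $T$ is a Hilbert ring, which is the last assertion. The only steps that deserve genuine care are the observation that a quotient of a ring without maximal subrings again has none, and the classical lemma that the Jacobson radical of an algebraic $K$-algebra is nil; everything else is a direct application of Proposition \ref{t20} and the facts collected in Section~2.
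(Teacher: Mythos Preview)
Your proof is correct and follows essentially the same route as the paper: reduce to Proposition~\ref{t20} via the observation that an algebraic $K$-algebra is integral over its center, use Corollary~\ref{t21} for absolute algebraicity of $K$, combine Corollary~\ref{t2} with ``$J$ is nil in an algebraic algebra'' for $N(T/I)=J(T/I)$, and finish the reduced case via $J(T)=0$. The only cosmetic difference is that the paper cites \cite[Theorem~4.20]{lam} for the nilness of $J(T/I)$, whereas you supply the short direct argument; and for the last clause the paper simply reads off commutativity and the Hilbert property from item~(1) of Proposition~\ref{t20} (since $J(T)=0$ gives $T=T/J(T)$), while you invoke Theorem~\ref{t19} and \cite[Corollary~3.5]{azkrmc} separately---both are equivalent.
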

\begin{proof}
For the first part, since $T$ is algebraic over $K$ and $K$ is a central subfield of the center of $T$, we deduce that $T$ is integral over its center. Thus by Proposition \ref{t20}, either $T$ has a maximal subring or $T$ satisfies in items $(1)-(4)$ of Proposition \ref{t20}. Now assume that $T$ has no maximal subring, then by Corollary \ref{t21}, $K$ is an absolutely algebraic field. Since $T$ is an algebraic $K$-algebra, we conclude that for each proper ideal $I$ of $T$, the ring $T/I$ is an algebraic $K$-algebra. Therefore by \cite[Theorem 4.20]{lam}, we infer that $J(T/I)$ is a nil ideal of $T/I$, i.e., $J(T/I)\subseteq N(T/I)$. From the fact that $T$ has no maximal subring, we deduce that $T/I$ has no maximal subring and therefore by Corollary \ref{t2}, $N(T/I)\subseteq J(T/I)$. Thus $N(T/I)=J(T/I)$. The final part is evident, for in this case $J(T)=N(T)=0$.
\end{proof}

In fact, in the previous corollary, if $T$ is a reduced algebraic $K$-algebra over a field $K$, then either $T$ has a maximal subring or is a von Neumann regular commutative ring with prime characteristic $p$ and $T$ is integral over $\mathbb{Z}_p$. To see this, assume that $T$ has no maximal subring, then $K$ is an absolutely algebraic field and therefore $K$ and $T$ has prime characteristic. Since $T$ is commutative and algebraic over $K$, we infer that $T$ is a zero-dimensional ring. Thus $T$ is a reduced zero-dimensional ring, hence is a von Neumann regular ring. By \cite[Corollary 3.14]{azkrc}, we deduce that $T$ is integral over $\mathbb{Z}_p$.\\

We remind that by \cite[Corollary 2.5]{azconch}, if $T$ is a commutative ring, then either $T$ has a maximal subring or $U(T)$ is integral over the prime subring of $T$. Now we have the following result.

\begin{prop}\label{t24a1}
Let $T$ be a ring which is integral over its center. If $J(T)$ is nil, then either $T$ has a maximal subring or $U(T)$ is integral over the prime subring of $T$. In particular, if $T$ is an algebraic algebra over a field $K$, then either $T$ has a maximal subring or $U(T)$ is integral over the prime subring of $T$.
\end{prop}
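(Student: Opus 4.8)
The plan is to reduce the general statement to the commutative case, using the machinery already developed in Proposition~\ref{t20}. Suppose $T$ has no maximal subring. Then by $(1)$ of Theorem~\ref{t1} the ring $T$ is quasi duo, and by Proposition~\ref{t20} the quotient $\bar T:=T/J(T)$ is a commutative reduced ring. Moreover $\bar T$ is integral over its center (indeed it is commutative), and if $T$ has no maximal subring then neither does $\bar T$; so by \cite[Corollary 2.5]{azconch} applied to the commutative ring $\bar T$, the unit group $U(\bar T)$ is integral over the prime subring of $\bar T$. The task is then to lift this integrality statement from $\bar T$ back to $T$, and this is where the hypothesis that $J(T)$ is nil enters.

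First I would observe that the natural map $U(T)\to U(\bar T)$ is surjective: if $\bar x\in\bar T$ is a unit, lift it to $x\in T$; then $x$ is a unit modulo $J(T)$, hence a unit in $T$ since $J(T)$ is contained in the Jacobson radical in the usual sense. So for any $u\in U(T)$, the image $\bar u$ satisfies a monic polynomial $\bar u^{\,n}+k_{n-1}\bar u^{\,n-1}+\cdots+k_0=0$ with the $k_i$ in the prime subring of $\bar T$ (equivalently, integers times $1$). Lifting, this says $f(u):=u^{n}+k_{n-1}u^{n-1}+\cdots+k_0\in J(T)$. Since $J(T)$ is nil, $f(u)^{m}=0$ for some $m\ge 1$. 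Expanding $f(u)^{m}$ gives a monic polynomial of degree $nm$ in $u$ with integer coefficients that vanishes at $u$; that is, $u$ is integral over the prime subring $\mathbb{Z}\cdot 1_T$ of $T$. (One should be a little careful that $u$ commutes with the coefficients, which it does since they are central, being integer multiples of $1_T$; so the binomial-type expansion of $f(u)^m$ is legitimate and the resulting relation is genuinely monic of degree $nm$.) This proves the first assertion.

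For the ``in particular'' clause, suppose $T$ is an algebraic $K$-algebra over a field $K$. As in the proof of Corollary~\ref{t23}, $T$ is integral over its center (since $K$ is central), and for the proper ideal $I=J(T)$ — or rather directly — \cite[Theorem 4.20]{lam} gives that $J(T)$ is a nil ideal of the algebraic $K$-algebra $T$. Hence the hypothesis of the first part is automatically satisfied, and the conclusion follows. (Alternatively one can invoke the reduction already packaged in Corollary~\ref{t23} together with Proposition~\ref{t20}$(4)$.)

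The step I expect to be the main obstacle is the lifting of the integrality relation through $J(T)$: one must be sure that ``integral over the prime subring mod $J(T)$'' plus ``$J(T)$ nil'' really yields ``integral over the prime subring'', and in particular that the monic-ness of the polynomial is preserved. The point is precisely that the coefficients live in the central prime subring, so they commute with $u$ and with each other; raising $f(u)$ to a power $m$ killing it and collecting terms then produces an honest monic integer polynomial in $u$ of degree $nm$, and no subtlety about non-commutativity intrudes. Everything else is a direct appeal to Theorem~\ref{t1}, Proposition~\ref{t20}, and the cited commutative result from \cite{azconch}.
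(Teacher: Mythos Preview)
Your proof is correct and follows essentially the same route as the paper: reduce to $T/J(T)$ via Proposition~\ref{t20}, invoke \cite[Corollary 2.5]{azconch} there, then lift the monic integral relation back through the nil ideal $J(T)$ by raising to a suitable power. The observation that $U(T)\to U(\bar T)$ is surjective is not actually needed (only the trivial forward direction is used), and the paper cites \cite[Corollary 4.19]{lam} rather than Theorem~4.20 for the algebraic-algebra case, but these are cosmetic differences.
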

\begin{proof}
Assume that $T$ is a ring which is integral over its center, say $C$, and $J(T)$ is nil. We prove that either $T$ has a maximal subring or $U(T)$ is integral over the prime subring of $T$, say $\mathbf{Z}$. Suppose that $T$ has no maximal subring. Thus by $(1)$ of Proposition \ref{t20}, $T/J(T)$ is a commutative ring which has no maximal subring, for $T$ has no maximal subring. Thus $U(T/J(T))$ is integral over the prime subring of $T/J(T)$, by \cite[Corollary 2.5]{azconch}. Now let $x\in U(T)$, clearly $x+J(T)$ is a unit in $T/J(T)$. Thus we immediately conclude that there exist $n\geq 1$ and $a_0,a_1,\ldots,a_{n-1}\in\mathbf{Z}$ such that $x^n+a_{n-1}x^{n-1}+\cdots+a_1x+a_0\in J(T)$. Since $J(T)$ is nil, we deduce that there exists $m\geq 1$, such that $(x^n+a_{n-1}x^{n-1}+\cdots+a_1x+a_0)^m=0$. Thus $x$ is integral over the prime subring of $T$ and we are done. For the second part of the statement of proposition, note that $T$ is integral over its center and $J(T)$ is nil, by \cite[Corollary 4.19]{lam}, thus we are done by the first part.
\end{proof}

Note that if $T$ is a ring with prime subring $\mathbf{Z}$, then one can easily see that $U(T)$ is integral over the prime subring of $T$ if and only if for each $x\in U(T)$ we have $x\in\mathbf{Z}[x^{-1}]$ if and only if for each subring $R$ of $T$ we have $U(R)=U(T)\cap R$.

\vspace{5mm}

By the proof of Theorem \ref{t19}, we also have the following result, which is a generalization of Corollary \ref{t10}.

\begin{cor}\label{t24a}
Let $T$ be a ring with $J(T)=0$, then either $T$ has a maximal subring of the center of $T$ is integrally closed in $T$.
\end{cor}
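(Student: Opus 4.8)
The plan is to reuse the argument in the proof of Theorem~\ref{t19} almost verbatim, simply dropping the global hypothesis that $T$ be integral over its center and instead feeding Corollary~\ref{t10} only those elements that are already integral over $C(T)$. So suppose $T$ has no maximal subring; I must show that $C(T)$ is integrally closed in $T$ (note that left and right integrality over $C(T)$ coincide, $C(T)$ being central, so there is nothing extra to check on the two sides). By part~$(1)$ of Theorem~\ref{t1}, $T$ is quasi duo, and by the comments following Theorem~\ref{t1} we have $J(T)=\bigcap_{M\in Max(T)}M$ with $T/M$ a division ring for every $M\in Max(T)$; combined with the hypothesis $J(T)=0$ this gives $\bigcap_{M\in Max(T)}M=0$.

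Now let $\alpha\in T$ be integral over $C:=C(T)$, and suppose for contradiction that $\alpha\notin C$. Choose $\beta\in T$ with $\alpha\beta-\beta\alpha\neq 0$. Since $\bigcap_{M\in Max(T)}M=0$, there is $M\in Max(T)$ with $\alpha\beta-\beta\alpha\notin M$. Let $\pi\colon T\to T/M$ be the canonical surjection. Then $\pi(\alpha)$ does not commute with $\pi(\beta)$, so $\pi(\alpha)$ is not central in the division ring $D:=T/M$. On the other hand $\pi(C)$ is a subring of the center $F$ of $D$ (every element of $C$ commutes with all of $T$, hence its image commutes with all of $D$), and reducing modulo $M$ a monic polynomial over $C$ vanishing at $\alpha$ produces a monic polynomial over $\pi(C)\subseteq F$ vanishing at $\pi(\alpha)$; thus $\pi(\alpha)$ is algebraic over $F$. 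By Corollary~\ref{t10}, $D=T/M$ then has a maximal subring, say $\bar S$. Its preimage $S:=\pi^{-1}(\bar S)$ is a proper subring of $T$ containing $\ker\pi=M$, and since the assignment $S\mapsto S/M$ is an inclusion-preserving bijection between the subrings of $T$ containing $M$ and the subrings of $T/M$, $S$ is a maximal subring of $T$ — contradicting our assumption. Hence every $\alpha\in T$ integral over $C(T)$ lies in $C(T)$, i.e.\ $C(T)$ is integrally closed in $T$.

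I do not anticipate a serious obstacle: the proof is essentially a transcription of that of Theorem~\ref{t19}, and the only substantive ingredient, Corollary~\ref{t10}, is already available. The two steps that need a line of justification are (a) that $\pi(\alpha)$ remains algebraic over the center of $T/M$ — immediate from $\pi(C(T))\subseteq C(T/M)$ together with the fact that a monic polynomial stays monic under reduction — and (b) that a maximal subring of the quotient $T/M$ lifts to a maximal subring of $T$, which is the standard correspondence of subrings through a surjection whose kernel is contained in the subring. Everything else is exactly the "separate a non-commuting pair by a maximal ideal, then pass to a division-ring quotient" device already exploited in the proof of Theorem~\ref{t19}.
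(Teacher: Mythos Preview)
Your proof is correct and follows essentially the same route as the paper, which explicitly introduces this corollary with ``By the proof of Theorem~\ref{t19}, we also have the following result''; you have simply spelled out that argument in full, restricting attention to elements integral over $C(T)$ rather than assuming all of $T$ is.
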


Consequently, if $T$ is a von Neumann regular ring which is integral over its center, we immediately conclude that either $T$ has a maximal subring or $T$ is a commutative ring.
In the next result we see that if the center of a $J$-semisimple (semiprimitive) ring $T$ has a finite minimal ring extension in $T$, then $T$ has a maximal subring.

\begin{cor}\label{t24b}
Let $T$ be a ring with $J(T)=0$. If the center of $T$ has a minimal extension of finite type in $T$, then $T$ has a maximal subring.
\end{cor}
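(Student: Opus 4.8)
The plan is to take Corollary~\ref{t24a} as the engine and argue by contradiction. Suppose $T$ has no maximal subring. Since $J(T)=0$, Corollary~\ref{t24a} applies and yields that $C:=C(T)$ is integrally closed in $T$; as $C$ is central in $T$, the left and right notions of integrality over $C$ coincide, so this says exactly that every element of $T$ which is a root of a monic polynomial with coefficients in $C$ already lies in $C$.

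Next I would unwind the hypothesis. By assumption there is a subring $S$ with $C\subsetneq S\subseteq T$ such that $C$ is a maximal subring of $S$ and $S$ is finite as a module over $C$, say generated by $n$ elements. The key step is to show that $S$ is integral over $C$. Fix $x\in S$ and consider left multiplication $\mu_x\colon S\to S$, $\mu_x(s)=xs$. Since each $c\in C$ is central in $T$, we have $x(cs)=(xc)s=(cx)s=c(xs)$, so $\mu_x$ is a $C$-module endomorphism of the finitely generated $C$-module $S$; by the determinant (Cayley--Hamilton) trick it satisfies a monic identity $\mu_x^{\,n}+c_{n-1}\mu_x^{\,n-1}+\cdots+c_0\,\mathrm{id}_S=0$ with all $c_i\in C$. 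Evaluating at $1\in S$ gives $x^{n}+c_{n-1}x^{n-1}+\cdots+c_0=0$ in $T$, so $x$ is integral over $C$.

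Putting the two steps together finishes the proof: $S\subseteq T$ is integral over $C$ and $C$ is integrally closed in $T$, hence $S\subseteq C$, i.e.\ $S=C$ --- contradicting $C\subsetneq S$. Therefore $T$ must have a maximal subring.

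I do not anticipate a genuine obstacle. The two points to watch are: reading ``minimal extension of finite type'' as module-finite over $C(T)$ (a minimal ring extension is automatically generated by a single element as an algebra, so only module-finiteness is a substantive hypothesis), and checking that the commutative determinant trick is legitimate here --- which it is, precisely because $C(T)$ is central, so $S$ is a bona fide $C(T)$-algebra and $\mu_x$ is $C(T)$-linear. Alternatively one could bypass Corollary~\ref{t24a} and rerun the ``localize to $T/M$'' argument from the proof of Theorem~\ref{t19} directly, but invoking \ref{t24a} is shorter.
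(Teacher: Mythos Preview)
Your proof is correct and follows essentially the same route as the paper: both invoke Corollary~\ref{t24a} to conclude that $C(T)$ is integrally closed in $T$, then derive a contradiction from the existence of a proper extension $C\subsetneq S\subseteq T$ that is integral over $C$. The paper is terser---it simply glosses ``of finite type'' as ``integral over $C$'' (and remarks that $S$ is commutative, which you do not need)---whereas you spell out the Cayley--Hamilton step; this is an expository difference, not a different argument.
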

\begin{proof}
Let $C$ be the center of $T$ and $R$ be a subring of $T$ which is minimal over $C$ of finite type (i.e., $R$ is integral over $C$, also note that clearly $R$ is commutative). Hence we are done by Corollary \ref{t24a}.
\end{proof}

Since the center of a simple ring is a field, we have the following immediate result from Corollary \ref{t24a}, which is a generalization of Corollary \ref{t10}.

\begin{cor}\label{t24c}
Let $T$ be a simple ring, then either $T$ has a maximal subring or the center of $T$ is algebraically closed in $T$.
\end{cor}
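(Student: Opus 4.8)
The plan is to reduce the statement directly to Corollary \ref{t24a}. First I would note that a simple ring $T$ automatically satisfies $J(T)=0$: the Jacobson radical $J(T)$ is a two-sided ideal of $T$, and since $T$ is simple its only two-sided ideals are $0$ and $T$; as $1\notin J(T)$ we must have $J(T)=0$. Hence Corollary \ref{t24a} applies verbatim and gives the dichotomy: either $T$ has a maximal subring or the center $C(T)$ is integrally closed in $T$. All that remains is to recognize that, for a simple ring, ``$C(T)$ is integrally closed in $T$'' is the same assertion as ``$C(T)$ is algebraically closed in $T$''.

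The key observation is the standard fact that the center $F:=C(T)$ of a simple ring is a field: if $0\neq c\in F$, then $TcT$ is a nonzero two-sided ideal of $T$, hence equals $T$, so $c$ is invertible in $T$, and one checks $c^{-1}$ is again central. Now suppose $\alpha\in T$ is algebraic over $F$, say $c_n\alpha^n+\cdots+c_1\alpha+c_0=0$ with $c_i\in F$ and $c_n\neq 0$. Since $c_n$ is a unit of $F$ and every $c_i$ is central, I can multiply this relation by $c_n^{-1}$ to obtain the monic relation $\alpha^n+c_n^{-1}c_{n-1}\alpha^{n-1}+\cdots+c_n^{-1}c_0=0$ over $F$, so $\alpha$ is integral over $F$. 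The converse implication (integral $\Rightarrow$ algebraic) is trivial, so over the field $F$ the two notions coincide, and therefore ``$F$ integrally closed in $T$'' and ``$F$ algebraically closed in $T$'' are literally the same statement. Combining this identification with the dichotomy supplied by Corollary \ref{t24a} yields the conclusion.

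I do not expect a genuine obstacle in this argument; the only points requiring a moment of care are the (well-known) fact that the center of a simple ring is a field, and the observation that multiplying a polynomial relation by the inverse of its central leading coefficient is legitimate inside a noncommutative ring precisely because the coefficients lie in $C(T)$. Both are routine, so the corollary follows essentially immediately from Corollary \ref{t24a}.
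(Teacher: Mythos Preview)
Your argument is correct and matches the paper's approach exactly: the paper derives the corollary immediately from Corollary~\ref{t24a} after noting that the center of a simple ring is a field (so that integral and algebraic closure coincide). You have simply made explicit the additional routine verification that $J(T)=0$ for a simple ring, which the paper leaves implicit.
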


In item $(2)$ of the following fact we generalize Corollary \ref{t10}, for prime rings.

\begin{prop}\label{t24d}
Let $T$ be a ring with $J(T)=0$. Then the following hold:
\begin{enumerate}
\item If $T$ is torsionfree over its center, then either $T$ has a maximal subring or the center of $T$ is algebraically closed in $T$.
\item If $T$ is a prime ring, then either $T$ has a maximal subring or $T$ is a domain and the center of $T$ is algebraically closed in $T$.
\item If $T$ is prime and algebraic over its center, then either $T$ has a maximal subring or $T$ is an integral domain (with $|T|\leq 2^{2^{\aleph_0}}$).
\end{enumerate}
\end{prop}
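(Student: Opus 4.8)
The plan is to handle the three parts in order, deducing (2) from (1) and (3) from (2); each step reduces to an earlier result of the paper.

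For part~(1), assume $T$ has no maximal subring. Since $J(T)=0$, Corollary~\ref{t24a} gives that the center $C:=C(T)$ is integrally closed in $T$. Now let $\alpha\in T$ be algebraic over $C$, and fix a relation $c_n\alpha^n+c_{n-1}\alpha^{n-1}+\cdots+c_1\alpha+c_0=0$ with all $c_i\in C$, $n\geq 1$, and $c_n\neq 0$. Multiplying through by $c_n^{\,n-1}$ and regrouping (the standard homogenization trick, legitimate because the $c_i$ are central) produces a \emph{monic} polynomial relation for $c_n\alpha$ over $C$; hence $c_n\alpha$ is integral over $C$, and therefore $c_n\alpha\in C$. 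Consequently, for every $\beta\in T$ we have $c_n(\alpha\beta-\beta\alpha)=(c_n\alpha)\beta-\beta(c_n\alpha)=0$, and since $c_n\neq 0$ and $T$ is torsionfree over $C$ it follows that $\alpha\beta=\beta\alpha$. Thus $\alpha\in C$, and $C$ is algebraically closed in $T$.

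For part~(2), assume again that $T$ has no maximal subring. By Corollary~\ref{t2} we have $N(T)\subseteq J(T)=0$, so $T$ is reduced; and a reduced prime ring is a domain. (If $ab=0$, then $(ba)^2=b(ab)a=0$, so $ba=0$ as $T$ is reduced; then for every $r\in T$ one has $(bra)^2=br(ab)ra=0$, hence $bra=0$, so $bTa=0$, and primeness forces $a=0$ or $b=0$.) A domain is torsionfree over every subring, in particular over its center, so part~(1) applies and $C(T)$ is algebraically closed in $T$.

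For part~(3), apply part~(2): either $T$ has a maximal subring, or $T$ is a domain and $C(T)$ is algebraically closed in $T$. In the latter case, since $T$ is algebraic over $C(T)$, every element of $T$ is algebraic over $C(T)$ and hence lies in $C(T)$; so $T=C(T)$ is commutative, and being prime it is an integral domain. For the cardinality bound, note that such a $T$ is (trivially) integral over its center and has no maximal subring, so Proposition~\ref{t20}(1) gives $|T|=|T/J(T)|\leq 2^{2^{\aleph_0}}$ (equivalently, $T$ is then reduced with no maximal subring, so the bound also follows from the reduced-ring fact recalled in the Introduction). The only genuinely new ingredient is in part~(1): passing from integral closedness of the center to algebraic closedness by clearing the leading coefficient and then using torsionfreeness to promote the centrality of $c_n\alpha$ to that of $\alpha$ itself; everything else is a reduction to Corollaries~\ref{t24a} and~\ref{t2} and Proposition~\ref{t20}, so I expect no serious obstacle.
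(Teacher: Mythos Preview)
Your proof is correct and follows essentially the same approach as the paper's: in (1) both arguments use Corollary~\ref{t24a} and the clearing-denominators trick to reduce algebraic closedness to integral closedness via torsionfreeness; in (2) both combine (1) with Corollary~\ref{t2} to get reduced, hence domain (the paper orders the steps slightly differently, first noting prime $\Rightarrow$ torsionfree, but the content is identical); in (3) the paper invokes \cite[Corollary 3.7]{azkrmc} for the cardinality bound whereas you use Proposition~\ref{t20}(1), but since $J(T)=0$ these give the same conclusion.
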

\begin{proof}
$(1)$ Assume that $T$ has no maximal subring, then by Corollary \ref{t24a} it is clear that the center of $T$, say $C$, is integrally closed in $T$. Now assume that $x\in T$ is algebraic over $C$. Hence there exists $0\neq c\in C$ such that $cx$ is integral over $C$. Thus $cx\in C$. Now for each $y\in T$ we have $cxy=ycx$ and since $c\in C$ and $T$ is torsionfree over $C$, we immediately conclude that $xy=yx$. Hence $x\in C$. For $(2)$, first note that it easy to see that a prime ring is torsionfree over its center. Now assume that $T$ has no maximal subring, therefore by $(1)$ the center of $T$ is algebraically closed in $T$. Also note that since $J(T)=0$, by Corollary \ref{t2} we deduce that $T$ is a reduced ring. Therefore $T$ is a reduced prime ring and hence is a domain. $(3)$ is immediate by $(2)$ (and \cite[Corollary 3.7]{azkrmc}).
\end{proof}

If $T$ is a prime ring, then the center of $T$, say $C$, is an integral domain. Now assume that $T$ is integral over its center, if $C$ is a field, then by the previous proposition we deduce that either $T$ has a maximal subring or $T$ is an absolutely algebraic field without maximal subring. Hence, assume that $C$ is not a field. In the next result we prove that the ring $T_X$ (the ring of quotients of $T$ respect to the cental multiplicatively closed set $X:=C\setminus\{0\}$, see \cite{rvn}), has a maximal subring. Obviously, this result is a generalization of the fact that the quotient field of any non-field integral domain has a maximal subring, see \cite[Proposition 2.3]{azkrf}.

\begin{prop}\label{t24d1}
Let $T$ be a prime ring which is integral over its center $C$, where $C$ is not a field. Then $J(T_X)$ is a nil ideal and $T_X$ has a maximal subring, where $X:=C\setminus\{0\}$.
\end{prop}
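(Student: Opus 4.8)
The plan is to reduce the statement to Corollary~\ref{t21} by passing to the central localization and working over the field of fractions $F:=C_X$ of $C$.

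First I would fix the set-up. Since $T$ is prime its center $C$ is a domain, so $X=C\setminus\{0\}$ is a central multiplicatively closed subset of $T$; the canonical map $C\to T_X$ is injective (an identity $uc=0$ with $u\in X$ forces $c=0$ in the domain $C$), the ring $C_X$ is the field of fractions $F$ of $C$, and $F$ embeds in $T_X$ with image contained in $C(T_X)$, because inverting central elements keeps them central. Now every element of $T_X$ can be written as $s^{-1}t$ with $s\in X$, $t\in T$; multiplying a monic relation $t^{n}+c_{n-1}t^{n-1}+\cdots+c_0=0$ (with $c_i\in C$) by the central element $s^{-n}$ exhibits $s^{-1}t$ as a root of a monic polynomial over $F$. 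Hence $T_X$ is integral — in particular algebraic — over the central subfield $F$, and therefore $T_X$ is integral over its center $C(T_X)$ as well. Since the Jacobson radical of an algebraic algebra over a field is nil by \cite[Corollary~4.19]{lam}, this already gives the first assertion: $J(T_X)$ is a nil ideal.

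For the existence of a maximal subring the one point that must be verified is that $F$ is \emph{not} an absolutely algebraic field, and this is precisely where the hypothesis ``$C$ is not a field'' is used: if $F$ were algebraic over its finite prime field, then every nonzero element of $F$ — in particular every nonzero $c\in C$ — would lie in a finite subfield and hence be a root of unity, so $c^{-1}=c^{m-1}\in C$ for a suitable $m\ge 1$, forcing $C$ to be a field. Thus $F$ is a non–absolutely–algebraic subfield of $T_X$, and as $T_X$ is integral over its center, Corollary~\ref{t21} supplies a maximal subring of $T_X$.

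I do not expect a serious obstacle: once the localization is set up, the proposition is essentially a corollary of Corollary~\ref{t21} together with \cite[Corollary~4.19]{lam}. The mildly delicate points — the only places I would be careful — are checking that $F$ really lands inside the \emph{center} of $T_X$ (so that $T_X$ is genuinely integral over $C(T_X)$ and Corollary~\ref{t21} is applicable) and observing that the assumption on $C$ is exactly what prevents $F$ from being absolutely algebraic. Finally, this does generalize \cite[Proposition~2.3]{azkrf}, since when $T$ is commutative one has $T_X=F=\operatorname{Frac}(C)$ with $C$ a non-field domain.
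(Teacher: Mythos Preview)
Your proposal is correct and follows essentially the same route as the paper: both pass to the central localization $T_X$, observe that it is an algebraic algebra over the fraction field $K=C_X\subseteq C(T_X)$, invoke \cite[Corollary~4.19]{lam} for the nilpotence of $J(T_X)$, and then apply Corollary~\ref{t21} (or Corollary~\ref{t23}) after noting that $K$ is not absolutely algebraic because $C$ is not a field. Your write-up simply supplies the details the paper leaves implicit.
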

\begin{proof}
Let $K$ be the quotient field of $C$, then it is not hard to see that $T_X$ is algebraic over $K$ and $K\subseteq C(T_X)$. Therefore by \cite[Corollary 4.19]{lam}, $J(T_X)$ is nil. For the final part note that $K$ is not an absolutely algebraic field, for $C$ is not a field. Thus $T_X$ has a maximal subring by Corollary \ref{t21} or Corollary \ref{t23}.
\end{proof}

\begin{rem}\label{t24e}
Let $\{A_i\ |\ i\in I\ \}$ be a family of ideals of a ring $T$. Then clearly for each $i\in I$, the center of $T/A_i$ is of the form $C_i/A_i$, where $C_i$ is a subring of $T$ which contains $A_i$. It is easy to see that $C(T)\subseteq \bigcap_{i\in I} C_i$. Conversely, if $\bigcap_{i\in I} A_i=0$, then it is not hard to see that $C(T)\supseteq \bigcap_{i\in I} C_i$ and therefore $C(T)=\bigcap_{i\in I} C_i$.
\end{rem}

\begin{prop}\label{t24g}
Let $T$ be a Hilbert semiprime/reduced ring. Then either $T$ has a maximal subring or the following hold:
\begin{enumerate}
\item For each prime ideal $P$ of $T$, $T/P$ is a domain and the center of $T/P$ is algebraically closed in $T/P$
\item The center of $T$ is integrally closed in $T$.
\end{enumerate}
\end{prop}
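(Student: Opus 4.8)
The plan is to assume that $T$ has no maximal subring and to derive (1) and (2) from earlier results: (2) from Corollary~\ref{t24a} applied to $T$ itself, and (1) from Proposition~\ref{t24d}(2) applied to each prime factor ring $T/P$. The only preparatory observation needed is that a (possibly non-commutative) Hilbert ring $R$ has $J(R)=Nil_*(R)$: each maximal two-sided ideal $\mathfrak m$ of $R$ is primitive, since $R/\mathfrak m$ is simple hence primitive, so $J(R)$ lies in every maximal two-sided ideal; writing each prime ideal of $R$ as an intersection of maximal ideals and using that maximal ideals are prime gives $\bigcap\{\text{maximal two-sided ideals of }R\}=\bigcap\{\text{prime ideals of }R\}=Nil_*(R)$, while $Nil_*(R)\subseteq J(R)$ always holds. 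Hence a semiprime (in particular reduced) Hilbert ring has $J=0$, and the same holds for all its prime factor rings, since those are again Hilbert — the prime, resp.\ maximal, ideals of $R/P$ are exactly the images of those of $R$ lying above $P$, and arbitrary intersections are preserved — and are prime, hence semiprime.

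Granting this, (2) is immediate: $T$ is semiprime and Hilbert, so $J(T)=0$, and as $T$ has no maximal subring, Corollary~\ref{t24a} yields that the center of $T$ is integrally closed in $T$.

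For (1), fix a prime ideal $P$ of $T$ and set $\overline T:=T/P$. Then $\overline T$ is a prime ring, it has no maximal subring (since $T$ has none), it is a Hilbert ring, and by the observation above $J(\overline T)=0$. Thus $\overline T$ meets the standing hypothesis of Proposition~\ref{t24d}, and clause (2) of that proposition — since $\overline T$ is prime and has no maximal subring — forces $\overline T=T/P$ to be a domain whose center is algebraically closed in $T/P$, which is precisely (1).

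I do not anticipate a genuine obstacle: the argument essentially repackages Corollary~\ref{t24a} and Proposition~\ref{t24d}(2). The one point requiring care is verifying that every prime factor $T/P$ really satisfies the hypotheses of Proposition~\ref{t24d}(2) — that is, that the non-commutative Hilbert property descends to prime factor rings and, together with semiprimeness, kills the Jacobson radical there — and, for (2), that Corollary~\ref{t24a} is invoked only after $J(T)=0$ has been established.
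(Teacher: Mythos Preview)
Your argument is correct. For part (1) you and the paper proceed identically: since $T$ is Hilbert, each prime factor $T/P$ has $J(T/P)=0$, and then Proposition~\ref{t24d}(2) applies.

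For part (2) your route is genuinely different and more direct. You observe that a semiprime Hilbert ring has $J(T)=Nil_*(T)=0$ and then invoke Corollary~\ref{t24a} once to conclude that $C(T)$ is integrally closed in $T$. The paper instead deduces (2) from (1): it uses that $\bigcap_{P\in Spec(T)}P=0$ together with Remark~\ref{t24e} to write $C(T)=\bigcap_P C_P$ where $C_P/P=C(T/P)$, and then shows that any $x$ integral over $C(T)$ maps into $C(T/P)$ for every $P$ (by part (1)), hence lies in every $C_P$ and so in $C(T)$. Your shortcut avoids this pointwise argument entirely; the paper's version, on the other hand, makes explicit how (2) is a consequence of (1), and only needs the Jacobson radical to vanish on prime factors rather than on $T$ itself.
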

\begin{proof}
Assume that $T$ has no maximal subring. Let $P$ be a prime ideal of $T$, since $T$ is a Hilbert ring, we deduce that $J(T/P)=0$. Therefore by $(2)$ of Proposition \ref{t24d}, we conclude that $T/P$ is a domain and $C(T/P)$ is algebraically closed in $T/P$, for $T/P$ has no maximal subring. Thus $(1)$ holds. Now note that $Nil_*(T)=0$ and therefore $\bigcap_{P\in Spec(T)} P=0$. Hence by Remark \ref{t24e}, we deduce that $C(T)=\bigcap_{P\in Spec(T)} C_P$, where $C_P$ is a subring of $T$ for it we have $C(T/P)=C_P/P$. It is clear that if $x\in T$ is integral over $C(T)$, then for each prime ideal $P$ of $T$, $x+P$ is integral over $C(T/P)$ and therefore by $(1)$ we have $x+P\in C(T/P)=C_P/P$. Therefore $x\in C_P$, for each prime ideal $P$ of $T$ and hence $x\in C(T)$. Thus $(2)$ holds.
\end{proof}

\section{Artinian, Noetherian, Hilbert and direct product of rings}
Let $T$ be a ring which is integral over its center $C(T)$. In this section, we investigate about the existence of maximal subrings of $T$ whenever $T$ or $C(T)$ is Artinian or Noetherian and with a condition on cardinality of them. Also we consider rings which are direct product of an infinite family of rings (which are integral over their centers). First we begin by the following result.

\begin{prop}\label{ta24}
Let $T$ be a ring which is integral over its center $C:=C(T)$. If any of the following conditions holds, then $T$ has a maximal subring.
\begin{enumerate}
\item $C(T)$ is an Artinian ring which is either uncountable or of zero characteristic.
\item $C(T)$ is a Noetherian ring with $|R|>2^{\aleph_0}$.
\end{enumerate}
\end{prop}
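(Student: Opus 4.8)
\emph{Plan of proof.} I would argue by contradiction: suppose $T$ has no maximal subring. By Proposition \ref{t20}, $T$ is then quasi duo, $\overline{T}:=T/J(T)$ is a commutative (reduced, Hilbert) ring, and for every maximal ideal $M$ of $T$ the quotient $T/M$ is an absolutely algebraic field, hence countable. First I record that $C\cap J(T)\subseteq J(C)$: if $x\in C\cap J(T)$ then for every $c\in C$ the element $1-xc$ is a central unit of $T$, and a central unit has a central inverse, so $1-xc\in U(C)$, whence $x\in J(C)$. Therefore the image $\overline{C}$ of $C$ in $\overline{T}$ equals $C/(C\cap J(T))$, $\overline{T}$ is integral over $\overline{C}$, $\overline{C}$ inherits being Artinian (case (1)) resp.\ Noetherian (case (2)) from $C$, and since $C\cap J(T)\subseteq J(C)$ lies in every maximal ideal of $C$, every residue field of $C$ is a residue field of $\overline{C}$; finally $\overline{C}\subseteq\overline{T}$ forces $\overline{C}$ to be reduced. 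The strategy in both cases is to produce, from this data, a maximal ideal quotient of $T$ which is too large to be absolutely algebraic, contradicting Proposition \ref{t20}.

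\emph{Case (1).} Here $\overline{C}$ is reduced Artinian, hence a finite product of fields $\overline{C}\cong k_1\times\cdots\times k_m$, each $k_i$ a residue field of $C$. If $\mathrm{char}(C)=0$, then (as $C\cap J(T)\subseteq J(C)$ is nilpotent and hence contains no nonzero integer multiple of $1$) $\mathrm{char}(\overline{C})=0$, so some $k_j$ has characteristic $0$; if $C$ is uncountable, writing $C$ as a finite product of local Artinian rings, finite length forces some factor to have uncountable residue field, which is one of the $k_i$. In either case $k_j$ is not algebraic over a finite field, so it has a conch maximal subring $A$ — e.g.\ $A=\mathcal{O}_v$ for any nontrivial rank-one valuation $v$ of $k_j$: one checks $\mathcal{O}_v[x]=k_j$ for every $x\in k_j\setminus\mathcal{O}_v$, so $\mathcal{O}_v$ is a maximal subring, and it is conch via any element of $\mathfrak{m}_v\setminus\{0\}$ (cf.\ \cite{azkrf}). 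The idempotent $e_j$ of $\overline{C}$ cutting out the $j$-th factor is central in $\overline{T}$, so $e_j\overline{T}$ is a commutative ring integral over the central subfield $e_j\overline{C}=k_j$. Running the proof of Theorem \ref{t14} with $k_j$ in place of the center (the hypothesis that the relevant integral closure be a subring is automatic, $e_j\overline{T}$ being commutative) shows that $e_j\overline{T}$ has a maximal subring. But $e_j\overline{T}$ is a homomorphic image of $T$, and a homomorphic image of a ring with no maximal subring has none — a contradiction.

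\emph{Case (2).} I would first establish a cardinality lemma: a commutative Noetherian ring all of whose residue fields are countable has cardinality $\le 2^{\aleph_0}$. Using a finite prime filtration of $C$ over itself this reduces to bounding $|D|$ for a Noetherian domain $D$ with countable residue fields; if $D$ is not a field, pick $0\ne\mathfrak{m}\in\mathrm{Max}(D)$ and note $|D|=|\mathrm{Frac}(D)|=|\mathrm{Frac}(D_{\mathfrak{m}})|=|D_{\mathfrak{m}}|\le|\widehat{D_{\mathfrak{m}}}|$, while $\widehat{D_{\mathfrak{m}}}=\varprojlim D_{\mathfrak{m}}/\mathfrak{m}^{n}D_{\mathfrak{m}}$ is an inverse limit of countable (finite-length Artinian) rings, so $|\widehat{D_{\mathfrak{m}}}|\le\aleph_0^{\aleph_0}=2^{\aleph_0}$. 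Since $|C|>2^{\aleph_0}$, $C$ has a maximal ideal $\mathfrak{m}$ with $C/\mathfrak{m}$ uncountable; as $C\cap J(T)\subseteq J(C)\subseteq\mathfrak{m}$, $\overline{\mathfrak{m}}:=\mathfrak{m}/(C\cap J(T))$ is a maximal ideal of $\overline{C}$ with residue field $C/\mathfrak{m}$. Since $\overline{T}$ is commutative and integral over $\overline{C}$, the classical lying-over/incomparability theorem yields a maximal ideal $\overline{\mathfrak{M}}$ of $\overline{T}$ with $\overline{\mathfrak{M}}\cap\overline{C}=\overline{\mathfrak{m}}$; then $\overline{T}/\overline{\mathfrak{M}}\supseteq C/\mathfrak{m}$ is an uncountable field, so its preimage $\mathfrak{M}$ in $T$ is a maximal ideal with $T/\mathfrak{M}$ uncountable, contradicting that $T/\mathfrak{M}$ is absolutely algebraic. (Alternatively: $\mathfrak{m}T\ne T$ — seen by applying Nakayama inside the commutative ring $\overline{T}$ to $\overline{\mathfrak{m}}\,\overline{T}$ — so a maximal left ideal $M\supseteq\mathfrak{m}T$ has $C\cap M=\mathfrak{m}$, and Corollary \ref{t22} applies.) Thus $T$ has a maximal subring in both cases.

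\emph{Main obstacle.} The genuinely delicate sub-case is $C$ Artinian of characteristic $0$: then $C$ (and $\overline{C}$) may have all residue fields equal to $\mathbb{Q}$, so the ``large residue field'' mechanism of case (2) is unavailable, and one is forced to use instead that \emph{every} characteristic-zero field has a conch maximal subring, together with the commutativity of $T/J(T)$ — which is precisely where the contradiction hypothesis is exploited. (Note one cannot simply take a conch maximal subring $A$ of $C$ and invoke Theorem \ref{t14}, since the integral closure of a central subring in $T$ need not be a subring, as $\mathbb{Z}\subseteq\mathbb{M}_2(\mathbb{Q})$ shows.) The remaining technical inputs — the cardinality lemma for Noetherian rings and the fact that a rank-one valuation ring of a field is a (conch) maximal subring — are routine but must be spelled out.
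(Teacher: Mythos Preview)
Your proof is correct, but it takes a more circuitous route than the paper's. The paper never passes to $T/J(T)$: in both cases it produces directly a maximal ideal $M$ of $C$ with $C/M$ not absolutely algebraic (citing \cite{azkra} for the Artinian case, and the proofs of \cite[Corollary~2.7 and Theorem~2.9]{azkrc} for the Noetherian case --- the latter amounting to essentially your completion/cardinality lemma), then applies Blair's lying-over \cite[Proposition~1.2]{blair} to obtain a prime $Q$ of $T$ with $Q\cap C=M$, so that $T/Q$ is integral over its center and contains the non-absolutely-algebraic field $C/M$; Corollary~\ref{t21} then yields a maximal subring of $T/Q$, hence of $T$. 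Your detour through the commutativity of $T/J(T)$ is thus unnecessary, and in case~(1) the appeal to conch subrings and Theorem~\ref{t14} is overkill: once you have the field $k_j$ (of characteristic~$0$ or uncountable) sitting inside the quotient ring $e_j\overline{T}$, Corollary~\ref{t21} applies at once, since $e_j\overline{T}$ is commutative and hence trivially integral over its center. What your approach does buy is a self-contained proof of the cardinality bound in case~(2), and your closing remark correctly identifies why Theorem~\ref{t14} cannot be applied directly to $T$; but the paper's route via lying-over and Corollary~\ref{t21} sidesteps that obstacle entirely and is considerably shorter.
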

\begin{proof}
For $(1)$, first note that by \cite[Propositions 1.4 and 2.4]{azkra}, $C$ has a maximal ideal $M$ such that $C/M$ is a field which is either uncountable or of zero characteristic. In particular in any cases, $C/M$ is not an absolutely algebraic field. Now, by \cite[Proposition 1.2]{blair}, $T$ has a prime ideal $Q$ such that $Q\cap C=M$. Clearly, $C/M\subseteq C(T/Q)$ and $T/Q$ is integral over $C/M$ and therefore $T/Q$ is integral over its center. Thus by Corollary \ref{t21}, we deduce that $T/Q$ has a maximal subring. Therefore $T$ has a maximal subring and we are done for $(1)$. Now we prove $(2)$, first assume that $C$ is an integral domain. By the proof of \cite[Corollary 2.7]{azkrc}, for each maximal ideal $M$ of $C$, there exists a natural number $n$, such that $C/M^n$ is an uncountable ring. Since $C/M^n$ is a zero dimensional noetherian local ring with only maximal ideal $M/M^n$, by the first part of the proof of $(1)$, we deduce that $C/M$ is an uncountable field and similar to the proof of $(1)$, we conclude that $T$ has a maximal subring. If $C$ is not an integral domain, then by the proof of \cite[Theorem 2.9]{azkrc}, $C$ has a minimal prime ideal $P$ such that $|C/P|=|C|$. By \cite[Proposition 1.2]{blair}, $T$ has a prime ideal $Q$ such that $Q\cap C=P$. Clearly $T/Q$ is integral over $C/P\subseteq C(T/Q)$ and therefore $T/Q$ is integral over its center. Similar to the first part of the proof of $(2)$, for each maximal ideal $M/P$ of $C/P$, the field $(C/P)/(M/P)$ is uncountable. Since $T/Q$ is integral over $C/P$, by \cite[Proposition 1.2]{blair}, $T/Q$ has a prime ideal $N/Q$ such that $(N/Q)\cap (C/P)=M/P$ and clearly $(T/Q)/(N/Q)$ is integral over $(C/P)/(M/P)\subseteq C((T/Q)/(N/Q))$, and therefore $(T/Q)/(N/Q)$ is integral over its center. Now since $(T/Q)/(N/Q)$ contains an uncountable field $(C/P)/(M/P)$, we immediately conclude that $(T/Q)/(N/Q)$ has a maximal subring by Corollary \ref{t21}. Thus $T$ has a maximal subring too.
\end{proof}

\begin{prop}\label{t25}
Let $T$ be a left noetherian algebraic $K$-algebra, where $K$ is a field. Then either $T$ has a maximal subring or $T$ is a left artinian ring. In particular, if $T$ has no maximal subring, then $T/J(T)$ is countable and $K$ is an absolutely algebraic field.
\end{prop}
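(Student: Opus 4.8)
The plan is to assume that $T$ has no maximal subring and show that then $T$ must be left Artinian; the last two assertions will then come almost for free.

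First I would record what the no-maximal-subring hypothesis already gives. Since $T$ is an algebraic $K$-algebra, $K$ sits inside the center $C(T)$, so $T$ is integral over its center; hence Corollary~\ref{t23} (equivalently Proposition~\ref{t20}) applies and tells us that $T/J(T)$ is a commutative reduced Hilbert ring, that $K$ is an absolutely algebraic field (so in particular countable), and --- since an algebraic $K$-algebra has nil Jacobson radical, \cite[Theorem 4.20]{lam} --- that $J(T)$ is a nil ideal of $T$.

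Next I would promote ``nil'' to ``nilpotent'': in a left Noetherian ring every nil one-sided ideal is nilpotent (Levitzki's theorem, see \cite{lam,good}), so $J(T)$ is nilpotent. Now consider $\overline{T}:=T/J(T)$. It is a homomorphic image of a left Noetherian ring, hence left Noetherian; by the previous step it is commutative and it is reduced; and, being a quotient of an algebraic $K$-algebra, it is algebraic over $K$, hence integral over the field $K$, so $\dim\overline{T}=0$. A commutative Noetherian ring of Krull dimension $0$ is Artinian, and a reduced commutative Artinian ring is a finite direct product of fields; thus $\overline{T}$ is semisimple Artinian. Since $T$ is left Noetherian, $J(T)$ is nilpotent and $T/J(T)$ is semisimple Artinian, the Hopkins--Levitzki theorem (see \cite{lam}) yields that $T$ is left Artinian, which is the required dichotomy.

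For the ``in particular'' clause: if $T$ has no maximal subring then by the above $\overline{T}=T/J(T)$ is a finite direct product of fields of the form $T/M$ with $M\in Max(T)$; by part $(2)$ of Proposition~\ref{t20} each such $T/M$ is an absolutely algebraic, hence countable, field, so the finite product $T/J(T)$ is countable, and $K$ is absolutely algebraic as already noted. The step I expect to be the main obstacle is the middle one --- assembling the facts that $T/J(T)$ is at once commutative (which is available only through the ``integral over the center'' results of Theorem~\ref{t19}/Corollary~\ref{t23}), reduced, zero-dimensional and Noetherian, and then invoking Hopkins--Levitzki correctly, for which one genuinely needs $J(T)$ nilpotent rather than merely nil.
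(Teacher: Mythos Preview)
Your proof is correct and follows essentially the same route as the paper: assume no maximal subring, use that $J(T)$ is nil (algebraic $K$-algebra) and hence nilpotent (left Noetherian), invoke Proposition~\ref{t20} to get $T/J(T)$ commutative, observe that $T/J(T)$ is then zero-dimensional Noetherian hence Artinian, and conclude via Hopkins--Levitzki. The only cosmetic differences are that you record a few extra properties (reduced, Hilbert, semisimple) that are true but not needed, and that for the countability of $T/J(T)$ the paper would simply cite \cite[Corollary 2.4]{azn} for commutative Noetherian rings, whereas you argue via the finite product of absolutely algebraic fields; both are fine.
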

\begin{proof}
Assume that $T$ has no maximal subring. Since $T$ is an algebraic $K$-algebra, by \cite[Theorem 4.20]{lam} we deduce that $J(T)$ is nil. Thus we conclude that $J(T)=Nil^*(T)$ is a nilpotent ideal of $T$, for $T$ is a left noetherian ring, see \cite[Theorem 10.30]{lam}. By $(1)$ of Proposition \ref{t20}, it is clear that $T/J(T)$ is a commutative ring. Since $T/J(T)$ is algebraic over $K$, we immediately conclude that $T/J(T)$ is a zero-dimensional commutative noetherian ring and therefore is an artinian ring. Now since $J(T)$ is a nilpotent ideal of $T$, we immediately conclude that $R$ is a left artinian ring, see \cite[Theorem 4.15]{lam}.
\end{proof}

Although the next lemma is well known but we prove it for the sake of completeness.

\begin{lem}\label{t26}
Let $R\subseteq T$ be a ring extension and $R$ is a left Artinian ring. Then $U(R)=U(T)\cap R$ and $J(T)\cap R\subseteq J(R)$.
\end{lem}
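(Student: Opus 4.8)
The plan is to prove the two claims in order, obtaining the statement about $J(T)\cap R$ as an easy consequence of the statement about units; the only substantive input is the descending chain condition on left ideals of $R$.

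First I would show $U(R)=U(T)\cap R$. The inclusion $U(R)\subseteq U(T)\cap R$ is trivial, since $R$ and $T$ share the same identity, so a two-sided inverse of $x$ in $R$ is also its inverse in $T$. For the reverse inclusion, let $x\in R\cap U(T)$ and consider the descending chain of left ideals $Rx\supseteq Rx^2\supseteq Rx^3\supseteq\cdots$ of $R$. Because $R$ is left Artinian this chain stabilizes, so $Rx^n=Rx^{n+1}$ for some $n\geq 1$; writing $x^n=sx^{n+1}$ with $s\in R$ yields the identity $(1-sx)x^n=0$, which we may read inside $T$. Since $x$ is a unit of $T$, so is $x^n$, and multiplying on the right by $x^{-n}$ forces $sx=1$. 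Thus $s$ is a left inverse of $x$ in $T$; as $x\in U(T)$ this gives $s=x^{-1}$, so $xs=1$ holds in $T$ as well. But $xs\in R$ and the identity of $R$ coincides with that of $T$, so $xs=1_R$; together with $sx=1_R$ this shows $x\in U(R)$.

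To deduce $J(T)\cap R\subseteq J(R)$, I would use the standard characterization $J(R)=\{r\in R : 1-ar\in U(R)\ \text{for all}\ a\in R\}$ (alternatively, one argues directly with the maximal left ideals of $R$, using that a unit cannot lie in a proper left ideal). Let $x\in J(T)\cap R$ and let $a\in R$ be arbitrary. Since $J(T)$ is a two-sided ideal of $T$ and $a\in R\subseteq T$, we get $ax\in J(T)$, hence $1-ax\in U(T)$; as $1-ax$ also lies in $R$, the first part gives $1-ax\in U(T)\cap R=U(R)$. Since $a\in R$ was arbitrary, the characterization yields $x\in J(R)$, as required.

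I do not expect a genuine obstacle. The one point needing a little care is the passage from ``left invertible in $T$'' to ``invertible in $R$'' in the first part, and this is exactly where the left Artinian hypothesis enters, via the stabilizing chain $Rx\supseteq Rx^2\supseteq\cdots$; the rest is formal, requiring neither commutativity nor any cardinality restriction.
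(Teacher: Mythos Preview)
Your proof is correct. The second part, deducing $J(T)\cap R\subseteq J(R)$ from the unit statement via quasi-regularity, is essentially identical to the paper's argument.

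For the first part, your route differs from the paper's. The paper observes that left multiplication by $x$ gives an $R$-module isomorphism ${}_RR\cong Rx$ (since $x\in U(T)$ forces $l.ann_R(x)=0$), so $Rx$ and ${}_RR$ have the same finite length and hence $Rx=R$; this yields a left inverse $s\in R$ with $sx=1$, whence $s=x^{-1}\in R$. You instead run the descending chain $Rx\supseteq Rx^2\supseteq\cdots$, stabilize it, and cancel $x^n$ in $T$. Both arguments exploit the left Artinian hypothesis and finish the same way (promoting a one-sided inverse in $R$ to a two-sided one via $x\in U(T)$). Your version is slightly more elementary in that it uses the DCC directly on a concrete chain and avoids invoking finite length of ${}_RR$ (which, as stated in the paper, tacitly uses Hopkins--Levitzki); the paper's version is a touch shorter once one accepts the length machinery.
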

\begin{proof}
Let $x\in U(T)\cap R$, then clearly $Rx\cong R$ as left $R$-modules, for $x\in U(T)$ and therefore $l.ann_R(x)=0$. Thus $Rx$ as a left $R$-module has finite length which is equal to the length of ${}_RR$. Since the length of ${}_RR$ is finite and $Rx\leq R$, we conclude that $Rx=R$. This immediately shows that $x^{-1}\in R$. Thus $U(R)=U(T)\cap R$. For the final part, assume that $a\in J(T)\cap R$ and $b\in R$. Thus $ab\in J(T)$ and therefore $1-ab\in U(T)$. Thus $1-ab\in U(T)\cap R=U(R)$, by the first part. Thus $a\in J(R)$ and we are done.
\end{proof}

Now the following is in order.

\begin{thm}\label{t27}
Let $R$ be a maximal subring of a ring $T$, and $R,\ T$ are one-sided Artinian rings. Then either $(R:T)\in Max(T)$ or $J(R)=R\cap J(T)$.
\end{thm}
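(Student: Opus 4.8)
The plan is to use Lemma \ref{t26} (and its right-handed analogue) to reduce the statement to a clean dichotomy about Jacobson radicals, and then to argue by cases according to whether $J(T)\subseteq R$. Since $R$ is one-sided Artinian, Lemma \ref{t26} gives $J(T)\cap R\subseteq J(R)$, so it suffices to prove that either $(R:T)\in Max(T)$ or $J(R)\subseteq J(T)$. Throughout, since $T$ is one-sided Artinian, I would use that $J(T)$ is nilpotent and that $\bar T:=T/J(T)$ is semisimple Artinian.

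First I would dispose of the case $J(T)\nsubseteq R$. Here $R+J(T)$ is a subring of $T$ (because $J(T)$ is a two-sided ideal of $T$) that strictly contains $R$, so maximality of $R$ forces $R+J(T)=T$; consequently $R\cap J(T)$ is a nilpotent two-sided ideal of $R$, hence $R\cap J(T)\subseteq J(R)$, while $R/(R\cap J(T))\cong (R+J(T))/J(T)=T/J(T)$ is semisimple, forcing $J(R)=R\cap J(T)\subseteq J(T)$: the second alternative. Now assume $J(T)\subseteq R$, so $J(T)\subseteq(R:T)$ and $J(T)\subseteq J(R)$; passing to $\bar T$ and $\bar R:=R/J(T)$, which is a maximal subring of $\bar T$, the claim becomes ``$(\bar R:\bar T)\in Max(\bar T)$ or $J(\bar R)=0$'' (note $(\bar R:\bar T)=(R:T)/J(T)$ and $J(\bar R)=J(R)/J(T)$, and ``$(R:T)$ maximal in $T$'' is equivalent to ``$(\bar R:\bar T)$ maximal in $\bar T$''). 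Writing the ideal $(\bar R:\bar T)$ of the semisimple ring $\bar T$ as $\bar T\bar g$ for a central idempotent $\bar g$, which lies in $\bar R$, and putting $\bar e=1-\bar g$, I would decompose $\bar T=\bar T\bar g\times\bar T\bar e$ and verify $\bar R=\bar T\bar g\times\bar R\bar e$ with $\bar R\bar e$ a maximal subring of $\bar T\bar e$ having zero conductor and with $J(\bar R)=J(\bar R\bar e)$. If $\bar T\bar e$ is simple, then $(\bar R:\bar T)=\bar T\bar g$ is a maximal ideal of $\bar T$, giving the first alternative; otherwise I appeal to the key lemma below.

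The key step — the only non-routine one — is: if $B=B_1\times\cdots\times B_n$ is semisimple Artinian with $n\geq 2$ and each $B_i$ simple, and $S$ is a maximal subring of $B$ with $(S:B)=0$, then $S$ is semisimple. I would prove this by showing that every projection $\pi_j\colon S\to B_j$ is onto: if $\pi_j(S)\neq B_j$ for some $j$, then $\pi_j^{-1}\big(\pi_j(S)\big)$ is a subring of $B$ lying strictly between $S$ and $B$ — strictly below $B$ since $\pi_j(S)\neq B_j$, and strictly above $S$ since equality would make $\{0\}\times\prod_{i\neq j}B_i$ a nonzero ideal of $B$ contained in $S$, contradicting $(S:B)=0$ — which is impossible by maximality of $S$. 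Once all $\pi_j$ are surjective, $\pi_j(J(S))\subseteq J(B_j)=0$ for each $j$, so $J(S)\subseteq\bigcap_j\ker\pi_j=0$. Applying this with $S=\bar R\bar e$ and $B=\bar T\bar e$ yields $J(\bar R)=J(\bar R\bar e)=0$, the second alternative, completing the proof. The points requiring care are all of a bookkeeping nature: checking that the subring produced in the non-surjectivity argument really does lie strictly between $S$ and $B$; making sure the hypothesis $n\geq 2$ is genuinely used (for $n=1$ one lands instead in the first alternative, since $0$ is a maximal ideal of a simple ring); and verifying that passage to $\bar T$, and then to the complementary factor $\bar T\bar e$, preserves ``$(R:T)$ maximal'' and translates ``$J(R)=R\cap J(T)$'' faithfully via $J(T)\subseteq J(R)$.
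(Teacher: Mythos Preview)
Your proof is correct, but the paper takes a considerably shorter and more direct route. Rather than splitting on whether $J(T)\subseteq R$ and then analysing the semisimple quotient via central idempotents and a projection argument, the paper works directly with the (left) primitive ideals of $T$, all of which are maximal since $T$ is one-sided Artinian. The dichotomy is simply: either $R$ contains some primitive ideal $Q$ of $T$, in which case $Q\subseteq (R:T)\subsetneq T$ forces $(R:T)=Q\in Max(T)$; or $R$ contains no primitive ideal of $T$, in which case maximality of $R$ gives $R+P=T$ for every primitive $P$, hence $R/(R\cap P)\cong T/P$ is simple, $R\cap P\in Max(R)$, and therefore $J(R)\subseteq\bigcap_P(R\cap P)=R\cap J(T)$, with equality by Lemma~\ref{t26}. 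Your key lemma is essentially a structural repackaging of this second case---each simple factor $B_j$ of $\bar T\bar e$ corresponds to a primitive ideal of $T$ not contained in $R$, and surjectivity of $\pi_j$ on $S$ is exactly the condition $R+P_j=T$---so the two arguments are morally the same; the paper's phrasing just avoids the case split, the passage to $\bar T$, and all the idempotent bookkeeping.
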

\begin{proof}
First note that each left primitive ideal $Q$ of $T$ is a maximal ideal of $T$, for $Q$ is prime and $T$ is a zero-dimensional ring (note $T$ is a one-sided Artinian ring). Hence if $R$ contains a left primitive ideal $Q$ of $T$, then $Q\subseteq (R:T)$ and therefore $(R:T)=Q$ is a maximal ideal of $T$. Thus assume that $R$ does not contain any left primitive ideal of $T$. For each left primitive ideal $P$ of $T$, we deduce that $R+P=T$, for $R$ is a maximal subring of $T$. Hence we have the rings isomorphism $R/(R\cap P)\cong (R+P)/P=T/P$, which means $R\cap P$ is a maximal ideal of $R$ and therefore $J(R)\subseteq R\cap P$. Thus $J(R)\subseteq J(T)\cap R$ and therefore by the previous lemma we infer that $J(R)=J(T)\cap R$.
\end{proof}

Note that in the previous theorem in fact we may assume that $T$ is a zero-dimensional ring and $R$ is a one-sided Artinian ring.

\begin{prop}\label{t27a}
Let $T$ be a zero-dimensional ring which is integral over its center. Then either $T$ has a maximal subring or each subring $R$ of $T$ is zero-dimensional. In particular, if $T$ has no maximal subring, then $Char(T)=n>0$ and $T$ is integral over $\mathbb{Z}_n$.
\end{prop}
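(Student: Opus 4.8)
The plan is to assume $T$ has no maximal subring and to prove the ``in particular'' assertion first; the statement about subrings will then drop out quickly. Since $T$ is zero-dimensional, $Spec(T)=Max(T)$, so $J(T)=\bigcap_{M\in Max(T)}M=Nil_*(T)$ is a nil ideal, and by $(1)$ of Theorem \ref{t1} the ring $T$ is quasi duo. Put $D:=T/J(T)$. By $(1)$ of Proposition \ref{t20}, $D$ is commutative and reduced; being a quotient of the zero-dimensional ring $T$ it is itself zero-dimensional, hence von Neumann regular; and since $T$ has no maximal subring, neither does its quotient $D$.

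The crux, and the step I expect to be the main obstacle, is to prove that $Char(D)>0$, i.e.\ that a commutative von Neumann regular ring without maximal subring cannot have characteristic zero. I would argue by contradiction. If $Char(D)=0$, then for each prime $p$ von Neumann regularity gives $p\cdot 1=p^{2}x_{p}$ for some $x_{p}\in D$, so $e_{p}:=px_{p}$ is idempotent with $p(1-e_{p})=0$; note $p\cdot x_{p}=e_{p}$, so $p$ becomes invertible in any quotient of $D$ in which $e_{p}$ becomes $1$. For distinct primes $p\neq q$ the common direct summand $D(1-e_{p})(1-e_{q})$ satisfies $p\cdot 1=q\cdot 1=0$, hence $1\cdot 1=0$, so $(1-e_{p})(1-e_{q})=0$ and $\{1-e_{p}\}_{p}$ is an orthogonal family of idempotents. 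Let $I$ be the ideal it generates; in $D/I$ every prime is invertible, so $D/I$ is a $\mathbb{Q}$-algebra, and if $D/I\neq 0$ it contains the non-absolutely-algebraic field $\mathbb{Q}$, whence Corollary \ref{t21} would furnish a maximal subring of $D/I$, impossible since $D/I$ is a quotient of $D$. Therefore $I=D$; writing $1$ as a finite sum of the $1-e_{p}$ and multiplying by the idempotent $\sum(1-e_{p})$ forces $1=\sum_{p\in F'}(1-e_{p})$ for a finite nonempty set $F'$ of primes with each $D(1-e_{p})\neq 0$ of characteristic $p$, so $D\cong\prod_{p\in F'}D(1-e_{p})$ has characteristic $\prod_{p\in F'}p>0$, a contradiction. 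Hence $Char(D)=m>0$, and since $m\cdot 1_{T}\in J(T)$ is nilpotent we get $Char(T)=n>0$; the prime subring of $T$ is then $\mathbb{Z}_{n}$.

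Next I would show $T$ is integral over $\mathbb{Z}_{n}$. As $D$ is reduced, $m$ is square-free, so by the Chinese Remainder Theorem $D$ is a finite product of commutative von Neumann regular rings of prime characteristic, each a direct factor of $D$ and so without maximal subring; applying \cite[Corollary 3.14]{azkrc} to these factors gives that $D$ is integral over its prime subring $\mathbb{Z}_{m}$. Now $C:=C(T)$ maps onto a subring of $D$ containing $\mathbb{Z}_{m}$ with nil kernel $C\cap J(T)$, so for each $c\in C$ there is a monic $f$ over $\mathbb{Z}_{n}$ with $f(c)\in C\cap J(T)$; then $f(c)^{\ell}=0$ for some $\ell$, so $f(X)^{\ell}$ exhibits $c$ as integral over $\mathbb{Z}_{n}$. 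Thus $C$ is integral over $\mathbb{Z}_{n}$, and since $\mathbb{Z}_{n}\subseteq C$ is central, the usual transitivity of integrality (applicable because $t\in T$ commutes with the central coefficients of its integral relation over $C$) yields that $T$ is integral over $\mathbb{Z}_{n}$.

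Finally, let $R$ be an arbitrary subring of $T$. Then $\mathbb{Z}_{n}\subseteq R$, so $R$ is integral over $\mathbb{Z}_{n}$ and a fortiori over its center $C(R)\supseteq\mathbb{Z}_{n}$; moreover $C(R)$, being a commutative ring integral over the zero-dimensional ring $\mathbb{Z}_{n}$, is zero-dimensional. Hence $R$ is integral over its zero-dimensional center, and the results of \cite{blair} on rings integral over their centers (lying over, going up and incomparability) give $dim(R)=dim(C(R))=0$. This shows every subring of $T$ is zero-dimensional and completes the proof.
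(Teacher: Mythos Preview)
Your argument establishing $Char(T)=n>0$ and the integrality of $T$ over $\mathbb{Z}_n$ via the von Neumann regular quotient $D=T/J(T)$ is correct, and it is a genuinely different route from the paper's. The paper proceeds in the opposite order: it first proves directly that every subring of $T$ is zero-dimensional, then reads off $Char(T)>0$ from the fact that $\mathbb{Z}$ has Krull dimension $1$, and finally obtains integrality over $\mathbb{Z}_n$ by applying the Gilmer--Heinzer criterion \cite[Theorem 1.3]{gil} to each commutative subring $\mathbb{Z}_n[x]$. Your route trades the Gilmer--Heinzer citation for the longer idempotent/characteristic argument in $D$.

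The gap is in your final step. To conclude that an arbitrary subring $R$ is zero-dimensional you invoke ``lying over, going up and incomparability'' from \cite{blair} for the extension $C(R)\subseteq R$. Blair proves lying over and going up, but incomparability is not among the results established there, and it is precisely INC that is needed to bound $\dim(R)$ from above by $\dim(C(R))$; LO and GU only give the trivial inequality $\dim(R)\geq\dim(C(R))$. Unwound, your step reduces to the assertion that a prime ring algebraic over a central subfield has no nonzero prime ideals, which is not a standard fact and is certainly not what \cite{blair} provides. The paper's argument for this part is shorter and sidesteps the issue entirely by \emph{using the ambient ring $T$}, which your step discards: for a prime $P$ of $R$, the $m$-system $R\setminus P$ misses some prime $Q$ of $T$; zero-dimensionality of $T$ makes $Q$ maximal, and since $T$ has no maximal subring, $(2)$ of Proposition~\ref{t20} makes $T/Q$ an absolutely algebraic field, so its subring $R/(R\cap Q)$ is a field; hence $R\cap Q$ is maximal in $R$ and, containing no element of $R\setminus P$, equals $P$.
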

\begin{proof}
Assume that $T$ has no maximal subring. Thus by $(2)$ of Proposition \ref{t20}, for each maximal ideal $M$ of $T$, the field $T/M$ is an absolutely algebraic field, hence each subring of $T/M$ is a field. Thus for each subring $S$ of $T$, since $(S+M)/M$ is a subring of $T/M$ and we have the rings isomorphism $S/(S\cap M)\cong (S+M)/M$, we deduce that $S\cap M$ is a maximal ideal of $S$. Now assume that $P$ is a prime ideal of $S$, then $X=S\setminus P$ is an $m$-system in $T$, thus $T$ has a prime ideal $Q$ such that $Q\cap X=\emptyset$. Therefore $Q\cap S\subseteq P$. Since $T$ is a zero-dimensional ring, we conclude that $Q$ is a maximal ideal of $T$ and therefore $Q\cap S$ is a maximal ideal of $S$, which by the first part implies that $P=Q\cap S$ is a maximal ideal of $S$, i.e., $S$ is a zero-dimensional ring. Hence the first part holds. Since each subring of $T$ is a zero-dimensional ring and $dim(\mathbb{Z})=1$, we deduce that $Char(T)=n>0$. Now for each $x\in T$, the commutative ring $R=\mathbb{Z}_n[x]$ is a subring of $T$ and each subring of $R$ is zero-dimensional, for each subring of $R$ is a subring of $T$. Thus by \cite[Theorem 1.3]{gil}, we infer that $x$ is integral over $\mathbb{Z}_n$. Hence $T$ is integral over $\mathbb{Z}_n$.
\end{proof}

A ring $T$ is called hereditary zero dimensional, if each subring of $T$ is zero-dimensional. Hence by the previous theorem if $T$ is a zero-dimensional ring which is integral over its center, and either $Char(T)=0$ or $T$ is not integral over its prime subring, then $T$ has a maximal subring. Also note that if $T$ is a ring with $Char(T)=n>0$ and $T$ is integral over $\mathbb{Z}_n$, then $T$ is a periodic ring, i.e., for each $x\in T$, there exist natural numbers $n\neq m$ such that $x^n=x^m$. To see this, since $x$ is integral over $\mathbb{Z}_n$, we immediately conclude that $\mathbb{Z}_n[x]$ is finite and note that $\{x^k\ |\ k\in\mathbb{N}\}\subseteq \mathbb{Z}_n[x]$. Hence $\{x^k\ |\ k\in\mathbb{N}\}$ is finite and therefore we are done.  In the next result we generalize \cite[Proposition 2.4 and Corollary 2.5]{azkra}. We remind the reader that if $T$ is a countable ring and $M$ is a finitely generated $R$-module, then there exists a natural number $n$, such that $M$ is a homomorphic image of $T^n$, as left $T$-module. Therefore $M$ is countable too. Now the following is in order.

\begin{thm}\label{t28}
Let $T$ be a left Artinian ring which is integral over its center. Then either $T$ has a maximal subring or $T$ is countable with nonzero characteristic and integral over its prime subring.
\end{thm}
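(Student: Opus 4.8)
The plan is to reduce everything to results already established in the excerpt, so that the proof becomes essentially bookkeeping. First, since $T$ is left Artinian it is zero-dimensional, so Proposition \ref{t27a} applies directly: either $T$ has a maximal subring, in which case there is nothing to prove, or else $Char(T)=n>0$ and $T$ is integral over its prime subring $\mathbb{Z}_n$. Thus from this point on I would assume $T$ has no maximal subring, and it remains only to prove that $T$ is countable (the other two assertions being already in hand).

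Next I would analyze $T/J(T)$. Since $T$ has no maximal subring, neither does $T/J(T)$, and $T/J(T)$ is again integral over its center, so Proposition \ref{t20}(1) gives that $T/J(T)$ is commutative. On the other hand, $T$ being left Artinian forces $T/J(T)$ to be semisimple Artinian; being commutative, by Wedderburn--Artin it is a finite direct product of fields, $T/J(T)\cong\prod_{i=1}^{k}T/M_i$ with $M_1,\dots,M_k$ the (finitely many) maximal ideals of $T$. (Alternatively one avoids invoking commutativity: any matrix factor $\mathbb{M}_{n_i}(D_i)$ with $n_i>1$ would have a maximal subring by Theorem \ref{t1}(4), hence so would $T$.) Now Proposition \ref{t20}(2) says each $T/M_i$ is an absolutely algebraic field, hence countable, and a finite product of countable sets is countable, so $T/J(T)$ is countable.

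Finally I would climb the radical filtration. Since $T$ is left Artinian, $J:=J(T)$ is nilpotent, say $J^m=0$, and we have the chain $T\supseteq J\supseteq J^2\supseteq\cdots\supseteq J^m=0$. Each factor $J^i/J^{i+1}$ is a left module over $T/J$ which, by left Artinianness of $T$, has finite length; in particular it is finitely generated over the countable ring $T/J$, hence countable by the remark preceding this theorem. Since $|T|=\prod_{i=0}^{m-1}|J^i/J^{i+1}|$ is a finite product of countable sets, $T$ is countable. Combined with the first paragraph, this yields that $T$ is countable with nonzero characteristic and integral over its prime subring, as desired.

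There is no genuine obstacle here beyond organizing the steps correctly: the only points needing a word of care are the double use of Proposition \ref{t20} (once for commutativity of $T/J(T)$, once for absolute algebraicity, hence countability, of its field components) together with the classical facts that a left Artinian ring is zero-dimensional, has nilpotent Jacobson radical, and has semisimple radical quotient.
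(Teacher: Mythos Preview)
Your proof is correct and follows essentially the same architecture as the paper's: reduce to showing $T/J(T)$ is countable, then climb the nilpotent radical filtration using that each factor $J^i/J^{i+1}$ is finitely generated over the countable ring $T/J(T)$; the nonzero characteristic and integrality over the prime subring come from Proposition~\ref{t27a} in both arguments (you invoke it at the start, the paper at the end).

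The one substantive difference is how countability of $T/J(T)$ is obtained. The paper, after noting $T/J(T)$ is commutative via Proposition~\ref{t20}(1), cites the external result \cite[Corollary~2.4]{azn} that a commutative Noetherian ring without a maximal subring is countable. You instead use Wedderburn--Artin to write the commutative semisimple ring $T/J(T)$ as a finite product of fields and then apply Proposition~\ref{t20}(2) to see each factor is absolutely algebraic, hence countable. Your route is a bit more self-contained (it avoids the outside citation and stays within the results of this paper plus classical structure theory), while the paper's route is shorter once that citation is granted; neither gains real generality over the other.
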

\begin{proof}
Assume that $T$ has no maximal subring. Thus by $(1)$ of Proposition \ref{t20}, we conclude that $T/J(T)$ is a commutative ring. Since $T$ has no maximal subring, we infer that $T/J(T)$ has no maximal subring too and therefore by \cite[Corollary 2.4]{azn}, $T/J(T)$ is a countable ring. Now we claim that $T$ is countable too. First note that there exists a natural number $m$ such that $J(T)^m=0$, for $T$ is a left Artinian ring. Now consider the chain $0=J(T)^m\subseteq J(T)^{m-1}\subseteq\cdots\subseteq J(T)\subseteq T$. For each $i$, the left $T$-module $J(T)^i/J(T)^{i+1}$, $0\leq i\leq m-1$ is a finitely generated $T/J(T)$-module (note, $T$ is a left noetherian ring and therefore each left ideal of $T$ is finitely generated). Thus $J(T)^i/J(T)^{i+1}$ is countable. In particular $J(T)^{m-1}$ is countable. Therefore for each $i$ we deduce that $J(T)^i$ is countable and thus $T$ is countable too. The final part is evident by Proposition \ref{t27a}.
\end{proof}

Now we want to prove that if a left Noetherian ring $T$ is integral over its center and has no maximal subring, then $|T|\leq 2^{\aleph_0}$.  We remind that if $T$ is a Noetherian commutative ring which have no maximal subring, then $T$ is countable, see \cite[Corollary 2.4]{azn}. We need the following lemma.

\begin{lem}\label{t31}
Let $T$ be a left noetherian ring which is integral over center and $J(T)=Nil_*(T)$ (i.e., $J(T)$ is nil). Then either $T$ has a maximal subring or $T$ is countable.
\end{lem}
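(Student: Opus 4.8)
The plan is to assume that $T$ has no maximal subring and to deduce that $T$ is countable, by first reducing to the already-known structure of such rings and then bounding the cardinality along a finite filtration.

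First I would pass to $\bar T:=T/J(T)$. Since $T$ is integral over its center and has no maximal subring, Proposition \ref{t20}$(1)$ gives that $\bar T$ is a commutative (reduced Hilbert) ring, and $\bar T$ is noetherian as a quotient of the left noetherian ring $T$. Moreover $\bar T$ has no maximal subring: if $\bar S$ were a maximal subring of $\bar T$, its preimage in $T$ would be a subring of $T$ containing $J(T)$ that is maximal in $T$, contradicting our assumption. Hence by \cite[Corollary 2.4]{azn} the ring $\bar T$ is countable.

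Next I would invoke the hypothesis $J(T)=Nil_*(T)$ together with left noetherianness. Since $Nil_*(T)\subseteq Nil^*(T)\subseteq J(T)$ always, here all three coincide, and as $T$ is left noetherian, $Nil^*(T)=J(T)$ is a nilpotent ideal, say $J(T)^m=0$ for some $m\ge 1$ (see \cite[Theorem 10.30]{lam}; equivalently, a nil one-sided ideal of a left noetherian ring is nilpotent by Levitzki's theorem). Then I would run the counting argument used in the proof of Theorem \ref{t28}: along the chain $0=J(T)^m\subseteq J(T)^{m-1}\subseteq\cdots\subseteq J(T)\subseteq T$, each $J(T)^i$ is a left ideal of the left noetherian ring $T$, hence a finitely generated left $T$-module, so $J(T)^i/J(T)^{i+1}$ is a finitely generated module over $\bar T$, and therefore countable (it is a homomorphic image of some $\bar T^{\,n}$, and $\bar T$ is countable). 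A descending induction along the chain shows each $J(T)^i$ is countable, and in particular $|T|\le\aleph_0$.

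I do not expect a serious obstacle: the content is essentially the combination of the structural input Proposition \ref{t20}$(1)$, the nilpotency of nil ideals in the noetherian setting, and the commutative counting result \cite[Corollary 2.4]{azn}. This is really a streamlined version of the Artinian case of Theorem \ref{t28}, where left Artinian was used only to force $J(T)$ to be nilpotent; here that nilpotency is supplied directly by left noetherianness plus the hypothesis that $J(T)$ is nil. The only points requiring a touch of care are the passage of "no maximal subring" to the quotient $\bar T$ and the observation that a finitely generated module over the countable ring $\bar T$ is countable.
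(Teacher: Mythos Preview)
Your proposal is correct and follows essentially the same route as the paper: reduce to $T/J(T)$ via Proposition~\ref{t20}(1) and \cite[Corollary~2.4]{azn} to get countability of the quotient, use left noetherianity plus nilness of $J(T)$ (via \cite[Theorem~10.30]{lam}) to obtain nilpotency, and then run the filtration counting argument of Theorem~\ref{t28}. The paper does exactly this, merely abbreviating your last step as ``by a similar proof of Theorem~\ref{t28}.''
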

\begin{proof}
Assume that $T$ has no maximal subring. Thus by $(1)$ of Proposition \ref{t20}, we conclude that $T/J(T)$ is a commutative ring. Since $T$ has no maximal subring, then we infer that $T/J(T)$ has no maximal subring too and therefore by \cite[Corollary 2.4]{azn}, $T/J(T)$ is a countable ring. Since $T$ is a left Noetherian ring and $J(T)$ is a nil ideal, we deduce that $J(T)$ is nilpotent, see \cite[Theorem 10.30]{lam}. Hence by a similar proof of Theorem \ref{t28}, we deduce that $T$ is countable.
\end{proof}

Now we have the following main result.

\begin{thm}\label{t32}
Let $T$ be a left Noetherian ring which is integral over its center. Then either $T$ has a maximal subring or $|T|\leq 2^{\aleph_0}$.
\end{thm}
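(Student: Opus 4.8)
The plan is to run a $J(T)$-adic completion argument, reusing the cardinality bookkeeping behind Lemma~\ref{t31}. Suppose $T$ has no maximal subring; I want to show $|T|\le 2^{\aleph_0}$. By $(1)$ of Proposition~\ref{t20} the ring $T/J(T)$ is commutative, and as a homomorphic image of the left Noetherian ring $T$ it is Noetherian; since it inherits the property of having no maximal subring, \cite[Corollary~2.4]{azn} makes it countable. Now, because $T$ is left Noetherian, each power $J(T)^{n}$ is a finitely generated left ideal, so every factor $J(T)^{n}/J(T)^{n+1}$ is a finitely generated left $T/J(T)$-module and hence countable; by induction on $n$ this yields $|T/J(T)^{n}|\le\aleph_0$ for all $n\ge 1$.

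The canonical ring homomorphism $T\to\prod_{n\ge 1}T/J(T)^{n}$ has kernel $\bigcap_{n\ge1}J(T)^{n}$, so $T/\bigcap_{n}J(T)^{n}$ embeds in a countable product of countable rings and therefore has cardinality at most $\aleph_0^{\aleph_0}=2^{\aleph_0}$. Thus the whole theorem reduces to the assertion that $\bigcap_{n\ge1}J(T)^{n}=0$. If $J(T)$ happens to be nil this is immediate — then $J(T)$ is nilpotent by Levitzki's theorem, $T$ being left Noetherian — and the argument simply recovers (and strengthens) Lemma~\ref{t31}; so the real content is the general case, which I expect to be the one genuinely nontrivial step.

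To settle it I would invoke a noncommutative Krull intersection theorem. The cleanest input is that a left Noetherian ring integral over its center $C:=C(T)$ is module-finite over $C$, whence $C$ is itself Noetherian \cite{blair}: localizing at a maximal ideal $\mathfrak m$ of $C$ and using that $\mathfrak m$ is central one gets $J(T_{\mathfrak m})^{k}\subseteq\mathfrak m T_{\mathfrak m}$ for some $k$ (since $T_{\mathfrak m}/\mathfrak m T_{\mathfrak m}$ is a finite-dimensional, hence Artinian, algebra over the field $C_{\mathfrak m}/\mathfrak m C_{\mathfrak m}$), hence $J(T_{\mathfrak m})^{kj}\subseteq\mathfrak m^{j}T_{\mathfrak m}$ for all $j$; as $T_{\mathfrak m}$ is a finitely generated module over the Noetherian local ring $C_{\mathfrak m}$, the commutative Krull intersection theorem gives $\bigcap_{j}\mathfrak m^{j}T_{\mathfrak m}=0$, so $\bigl(\bigcap_{n}J(T)^{n}\bigr)_{\mathfrak m}=0$; since $\bigcap_{n}J(T)^{n}$ is a finitely generated $C$-module vanishing at every maximal ideal of $C$, it is zero. (Equivalently, $T$ is a Noetherian PI ring, so Jacobson's conjecture $\bigcap_n J(T)^n=0$ applies.) Granting $\bigcap_{n}J(T)^{n}=0$, we get $T\hookrightarrow\prod_{n}T/J(T)^{n}$ and hence $|T|\le 2^{\aleph_0}$, as claimed. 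I would also note that this route, going through the countable Noetherian center, in fact yields the sharper conclusion that $T$ is countable; the bound $2^{\aleph_0}$ in the statement is exactly what the completion estimate gives by itself, together with whichever Krull-intersection/Jacobson-conjecture fact one chooses to quote.
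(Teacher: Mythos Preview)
Your overall strategy is exactly the paper's: show each quotient $T/J(T)^{n}$ is countable and then embed $T$ into $\prod_{n\ge 1}T/J(T)^{n}$ using $\bigcap_{n}J(T)^{n}=0$. Your countability argument for $T/J(T)^{n}$ (commutative Noetherian $T/J(T)$ is countable via \cite[Corollary~2.4]{azn}, then climb the $J$-adic filtration using finitely generated $T/J(T)$-modules) is precisely the content of Lemma~\ref{t31}, which the paper simply invokes at that point.

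The only substantive divergence is in justifying $\bigcap_{n}J(T)^{n}=0$. The paper does not re-derive it at all: it cites \cite[Theorem~2.4]{blair}, which is exactly this Krull intersection statement for one-sided Noetherian rings integral over their centers. Your proposed re-derivation, by contrast, rests on the claim that a left Noetherian ring integral over its center is module-finite over that center (hence PI, hence $C$ Noetherian, etc.). That claim is not what Blair proves and is not known in general; integrality over the center does not force finite generation as a $C$-module, and the PI alternative has the same gap. So your localization/commutative-Krull argument, and the ``sharper'' countability remark at the end, are not justified as written. The fix is simply to quote \cite[Theorem~2.4]{blair} directly, after which your proof and the paper's coincide.
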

\begin{proof}
Assume that $T$ has no maximal subring. By \cite[Theorem 2.4]{blair}, we have $\bigcap_{n=1}^\infty (J(T))^n=0$. Thus $T$ embeds in $\prod_{n=1}^\infty T/J(T)^n$. Now for each natural number $n$, $T/J(T)^n$ is a left Noetherian ring which is integral over its center and clearly $J(T/J(T)^n)=J(T)/J(T)^n$ is nilpotent. Since $T$ has no maximal subring, we infer that $T/J(T)^n$ has no maximal subring for each $n$. Thus by Lemma \ref{t31}, we deduce that $T/J(T)^n$ is countable for each $n$. Therefore
$$|T|\leq \prod_{n=1}^\infty |T/J(T)^n|\leq 2^{\aleph_0}.$$
\end{proof}

In the next observations we put some conditions on the center of a ring, for example we assume that the center is a Hilbert ring (especially, if the center is a zero-dimensional ring). First we need the following lemma.

\begin{lem}\label{t33}
Let $T$ be a domain which is integral over its center. If $J(C(T))=0$, then either $T$ has a maximal subring or $T$ is a Hilbert integral domain with $J(T)=0$ and $|T|\leq 2^{2^{\aleph_0}}$.
\end{lem}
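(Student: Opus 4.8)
The plan is to assume $T$ has no maximal subring and then extract, in order, the four assertions: $J(T)=0$, commutativity, the Hilbert property, and the cardinality bound. The whole argument rests on first proving $J(T)=0$; once that is in hand, everything else is a direct appeal to results already in the paper.

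First I would record two elementary facts coming from $T$ being integral over its center $C:=C(T)$. If $u\in C\cap U(T)$, then $u^{-1}\in T$ is integral over $C$, say $(u^{-1})^n+c_{n-1}(u^{-1})^{n-1}+\cdots+c_1u^{-1}+c_0=0$ with $c_i\in C$; multiplying through by $u^{n-1}$ (legitimate since $u$ is central) yields $u^{-1}=-(c_{n-1}+c_{n-2}u+\cdots+c_0u^{n-1})\in C$, so $U(T)\cap C=U(C)$. Consequently, if $c\in J(T)\cap C$ then for every $d\in C$ we have $1-cd\in C$ and $1-cd\in U(T)$, hence $1-cd\in U(T)\cap C=U(C)$; as $C$ is commutative this says $c\in J(C)$. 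Therefore $J(T)\cap C\subseteq J(C)=0$ by hypothesis.

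Next I would use that $T$ is a domain to upgrade this to $J(T)=0$. Suppose $0\neq x\in J(T)$ and pick a monic integral relation $x^n+c_{n-1}x^{n-1}+\cdots+c_1x+c_0=0$ over $C$ of least possible degree $n\geq 1$. Setting $y:=x^{n-1}+c_{n-1}x^{n-2}+\cdots+c_1\in T$ (so $y=1$ when $n=1$), the relation rewrites as $c_0=-xy\in xT\subseteq J(T)$; since $c_0\in C$ we get $c_0\in J(T)\cap C=0$, i.e.\ $c_0=0$. Then $xy=0$, and because $T$ is a domain with $x\neq 0$ we conclude $y=0$. For $n=1$ this contradicts $y=1$ (equivalently, $x=-c_0=0$, contradicting $x\neq0$); for $n\geq 2$ it contradicts the minimality of $n$. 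Hence $J(T)=0$. I expect this step — the manipulation of the minimal integral relation and the cancellation in the domain — to be the only place where genuine work is needed.

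Finally, with $J(T)=0$ and $T$ integral over its center, Theorem \ref{t19} forces $T$ to be commutative (since it has no maximal subring), so $T$ is a commutative domain, i.e.\ an integral domain. A commutative ring without a maximal subring is a Hilbert ring by \cite[Corollary 3.5]{azkrmc}, so $T$ is a Hilbert integral domain with $J(T)=0$. The bound $|T|=|T/J(T)|\leq 2^{2^{\aleph_0}}$ is then immediate from $(1)$ of Proposition \ref{t20}. This completes the proof.
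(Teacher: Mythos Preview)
Your proof is correct and follows essentially the same route as the paper's: establish $J(T)\cap C\subseteq J(C)=0$, use a minimal-degree integral relation in the domain to force $J(T)=0$, then invoke the structure results (Theorem~\ref{t19}/Proposition~\ref{t20}) to obtain commutativity, the Hilbert property, and the cardinality bound. The only difference is that the paper cites \cite[Corollary~1.4]{blair} for $J(T)\cap C(T)=J(C(T))$ and is terser about why one may take $c_0\neq 0$, whereas you supply both arguments explicitly.
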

\begin{proof}
Assume that $T$ has no maximal subring. We claim that $J(T)=0$. Assume that $0\neq x\in J(T)$. Since $x$ is integral over $C(T)$ we infer that there exist $n$ and $c_0,c_1,\ldots,c_{n-1}\in C(T)$ such that $x^n+c_{n-1}x^{n-1}+\cdots+c_1x+c_0=0$, and $c_0\neq 0$ for $T$ is a domain. Hence $c_0\in J(T)\cap C(T)$. By \cite[Corollary 1.4]{blair}, $J(T)\cap C(T)=J(C(T))$ and therefore $c_0=0$ which is a contradiction. Thus $J(T)=0$ and therefore we are done by $(1)$ of Proposition \ref{t20}.
\end{proof}

\begin{cor}\label{t34}
Let $T$ be a ring which is integral over its center. If $C(T)$ is a Hilbert ring, then either $T$ has a maximal subring or for each completely prime ideal $Q$ of $T$, $T/Q$ is a Hilbert integral domain and therefore $Q$ is an intersection of a family of maximal (one-sided) ideal of $T$ (i.e., $J(T/Q)=0$).
\end{cor}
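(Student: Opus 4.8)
The plan is to reduce the statement to Lemma \ref{t33} applied to the quotient ring $\bar T := T/Q$. Assume that $T$ has no maximal subring and let $Q$ be a completely prime ideal of $T$. Then $\bar T$ is an integral domain, and it too has no maximal subring: if $\bar S$ were a maximal subring of $\bar T$, then under the canonical surjection $\pi\colon T\to\bar T$ the subring $\pi^{-1}(\bar S)$ would be a proper subring of $T$ containing $Q$, and by the correspondence between subrings of $T$ containing $Q$ and subrings of $\bar T$ it would be maximal in $T$, a contradiction. Moreover $\bar T$ is integral over the central subring $\bar C := (C(T)+Q)/Q$, hence integral over its center $C(\bar T)\supseteq\bar C$. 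Consequently, once I have shown $J(C(\bar T))=0$, Lemma \ref{t33} will give that $\bar T=T/Q$ is a Hilbert integral domain with $J(T/Q)=0$.

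The main point is therefore to verify $J(C(\bar T))=0$. First, $C(T)\cap Q$ is a prime ideal of the commutative ring $C(T)$: if $a,b\in C(T)$ with $ab\in Q$, then $aTb=Tab\subseteq Q$, so $a\in Q$ or $b\in Q$ by primeness of $Q$ in $T$. Since $C(T)$ is a Hilbert ring, the prime ideal $C(T)\cap Q$ is an intersection of maximal ideals of $C(T)$, whence $\bar C\cong C(T)/(C(T)\cap Q)$ has $J(\bar C)=0$. Now $C(\bar T)$ is a commutative domain that is integral over $\bar C$ (being a subring of $\bar T$), so by lying over for integral extensions of commutative rings (a prime lying over a maximal ideal is maximal) one gets $J(C(\bar T))\cap\bar C\subseteq J(\bar C)=0$. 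If there were $0\neq y\in J(C(\bar T))$, pick an integral equation $y^n+d_{n-1}y^{n-1}+\cdots+d_0=0$ over $\bar C$ of least degree; since $C(\bar T)$ is a domain, $d_0\neq0$, yet $d_0=-y(y^{n-1}+\cdots+d_1)\in yC(\bar T)\subseteq J(C(\bar T))$ and $d_0\in\bar C$, contradicting $J(C(\bar T))\cap\bar C=0$. Hence $J(C(\bar T))=0$ and Lemma \ref{t33} applies, giving that $T/Q$ is a Hilbert integral domain with $J(T/Q)=0$.

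Finally I would translate $J(T/Q)=0$ back to $T$: since $T/Q$ has no maximal subring, by the remarks following Theorem \ref{t1} we have $Max(T/Q)=Max_l(T/Q)=Max_r(T/Q)$ and $0=J(T/Q)=\bigcap_{M\in Max_l(T/Q)}M$. Taking preimages under $\pi$, and using that a one-sided ideal of $T$ containing $Q$ is maximal precisely when its image in $T/Q$ is, we get that $Q$ is the intersection of the maximal left ideals of $T$ containing it (and likewise of the maximal right, or maximal two-sided, ideals of $T$ containing it), which is the last assertion. The only step that requires real care is the transfer of $J=0$ carried out in the second paragraph: from the commutative Hilbert ring $C(T)$ to its quotient $\bar C$ by the prime ideal $C(T)\cap Q$, and then across the commutative integral extension $\bar C\subseteq C(\bar T)$; the remainder is the subring/ideal correspondence theorem together with direct appeals to Lemma \ref{t33} and the structure results for rings without maximal subrings.
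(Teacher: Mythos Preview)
Your proof is correct and follows essentially the same line as the paper's. The paper also passes to $\bar T=T/Q$, observes that $\bar C:=(C(T)+Q)/Q$ is an integral domain with $J(\bar C)=0$ (since $C(T)$ is Hilbert and $C(T)\cap Q$ is prime), and then runs the integral-equation argument of Lemma~\ref{t33} to get $J(\bar T)=0$.

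The one minor difference is where the integral-equation trick is applied. The paper works directly with $\bar C$: it cites \cite[Corollary 1.4]{blair} to obtain $J(\bar T)\cap\bar C=J(\bar C)=0$ and then concludes $J(\bar T)=0$ exactly as in the proof of Lemma~\ref{t33}, without passing through the full center $C(\bar T)$. You instead insert an intermediate step, first proving $J(C(\bar T))=0$ via the commutative integral extension $\bar C\subseteq C(\bar T)$, so that Lemma~\ref{t33} can be quoted verbatim. Your route is slightly longer but has the virtue of using only the standard commutative lying-over theorem rather than Blair's noncommutative analogue at this point; the paper's route is more direct. Either way the substance is the same.
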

\begin{proof}
Assume that $T$ and therefore $T/Q$ have no maximal subring. First note that one can easily see that $Q\cap C(T)$ is a prime ideal of $C(T)$ and $T/Q$ is integral over the integral domain $A:=(C(T)+Q)/Q\subseteq C(T/Q)$. Since $C(T)$ is a Hilbert ring, we conclude that $J(C(T)/(C(T)\cap Q))=0$ and therefore $J(A)=0$ too. Now similar to the proof of the previous lemma we deduce that $T/Q$ is commutative Hilbert ring (note that by \cite[Corollary 1.4]{blair}, $J(A)=J(T/Q)\cap A$) and hence we are done.
\end{proof}

\begin{prop}\label{t35}
Let $T$ be a ring which is integral over its center. If $C(T)$ is a zero dimensional ring, then each strongly prime ideal $Q$ of $T$, is an intersection of a family of maximal left/right ideals of $T$ (i.e., $J(T/Q)=0$). In particular, $J(T)=Nil^*(T)$. Moreover, if $T$ has no maximal subring, then $T/Q$ is a field for each strongly prime ideal $Q$ of $T$, and $U(T)$ is integral over the prime subring of $T$.
\end{prop}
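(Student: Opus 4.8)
The plan is to descend to the quotients $T/Q$ and exploit that each of them is an algebra algebraic over a field.

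First I would record a preliminary fact valid for \emph{every} prime ideal $Q$ of $T$. Writing $C:=C(T)$, the contraction $Q\cap C$ is a prime ideal of $C$: if $a,b\in C$ with $ab\in Q$, then $aTb=Tab\subseteq Q$ since $a$ is central, so primeness of $Q$ gives $a\in Q$ or $b\in Q$. Because $C$ is zero-dimensional, $Q\cap C$ is actually a maximal ideal of $C$, hence $A_Q:=(C+Q)/Q\cong C/(Q\cap C)$ is a field. Since $T$ is integral over $C$, passing to $T/Q$ shows that $T/Q$ is integral, hence algebraic, over the central subfield $A_Q$ of $C(T/Q)$; by \cite[Theorem 4.20]{lam} (see also \cite[Corollary 4.19]{lam}) it follows that $J(T/Q)$ is a nil ideal of $T/Q$, for every prime $Q$.

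Next I would treat the three assertions. For the first, if $Q$ is strongly prime then $T/Q$ has no nonzero nil ideal, so the nil ideal $J(T/Q)$ must be $0$; since $J(T/Q)$ is the intersection of the maximal left ideals of $T/Q$ (and also of its maximal right ideals), and these are precisely the images of the maximal left (right) ideals of $T$ containing $Q$, the equality $J(T/Q)=0$ says exactly that $Q$ is an intersection of a family of maximal left (right) ideals of $T$. For the equality $J(T)=Nil^*(T)$: given $x\in J(T)$ and an arbitrary prime $Q$ of $T$, the image $x+Q$ lies in $J(T/Q)$, which is nil, so $x^m\in Q$ for some $m\ge 1$, whence $x\in Q$ as prime ideals are radical; thus $x\in\bigcap_{Q\in Spec(T)}Q=Nil_*(T)$, so $J(T)\subseteq Nil_*(T)\subseteq Nil^*(T)$, and since $Nil^*(T)\subseteq J(T)$ for every ring, all three coincide. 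For the last assertion, assume $T$ has no maximal subring and let $Q$ be strongly prime; then $T/Q$ has no maximal subring either, it is prime, and $J(T/Q)=0$ by the first assertion, so by $(2)$ of Proposition \ref{t24d} it is a domain whose center is algebraically closed in it. Every element of $T/Q$ is algebraic over the field $A_Q\subseteq C(T/Q)$, hence over $C(T/Q)$, hence lies in $C(T/Q)$; therefore $T/Q=C(T/Q)$ is a commutative domain integral over the field $A_Q$, i.e., a field. Finally, $J(T)=Nil^*(T)$ is a nil ideal and $T$ is integral over its center, so Proposition \ref{t24a1} gives that $U(T)$ is integral over the prime subring of $T$.

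The only substantive external input is Lam's theorem that an algebraic algebra over a field has nil Jacobson radical; the remaining work is the contraction argument for $Q\cap C$, the correspondence of maximal one-sided ideals under the quotient map, and the standard facts that $J$ passes into the radical of a quotient and that $Nil_*(T)\subseteq Nil^*(T)\subseteq J(T)$. I expect the first assertion (equivalently, producing enough maximal one-sided ideals inside a strongly prime ideal) to be the crux, with the other two following formally from it and from results already established in the paper.
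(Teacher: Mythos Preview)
Your argument for the first assertion and for the ``moreover'' part is correct and essentially matches the paper's (the paper reaches $T/Q$ commutative via Proposition~\ref{t20}(1) rather than Proposition~\ref{t24d}(2), but the content is the same).

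There is, however, a genuine gap in your proof that $J(T)=Nil^*(T)$. You argue that for $x\in J(T)$ and any prime $Q$ one has $x^m\in Q$ for some $m$, ``whence $x\in Q$ as prime ideals are radical''. That implication is false in the noncommutative setting: a prime ideal $Q$ only satisfies $aTb\subseteq Q\Rightarrow a\in Q$ or $b\in Q$, and this does \emph{not} force $x^m\in Q\Rightarrow x\in Q$. For instance, in the prime ring $\mathbb{M}_2(k)$ the zero ideal is prime, yet $e_{12}^2=0$ while $e_{12}\neq 0$. So your route to $J(T)\subseteq Nil_*(T)$ collapses at this step (and in fact $J(T)\subseteq Nil_*(T)$ is a stronger statement than what is being claimed).

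The paper avoids this by working only with \emph{strongly} prime ideals: one uses the characterization $Nil^*(T)=\bigcap\{Q:Q\ \text{strongly prime}\}$ (see \cite[Proposition 2.6.7]{rvn}). Since you have already shown $J(T/Q)=0$ for every strongly prime $Q$, each such $Q$ is an intersection of maximal left ideals, hence $J(T)\subseteq Q$. Intersecting over all strongly prime $Q$ gives $J(T)\subseteq Nil^*(T)$, and together with the standard inclusion $Nil^*(T)\subseteq J(T)$ you are done. Your preliminary observation that $J(T/Q)$ is nil for \emph{all} primes $Q$ is correct and interesting, but it is not needed here and does not by itself yield $J(T)\subseteq Nil_*(T)$.
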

\begin{proof}
First note that $Q\cap C(T)$ is a prime ideal of $C(T)$ and therefore by our assumption is a maximal ideal of $C(T)$. Hence $E:=(C(T)+Q)/Q$ is a field and clearly $T/Q$ is integral over $E$. Hence by \cite[Corollary 4.19]{lam}, we conclude that $J(T/Q)$ is a nil ideal of $T/Q$. Thus $J(T/Q)=0$, for $T/Q$ is a strongly prime ring. Therefore $Q$ is an intersection of a family of maximal left/right ideals of $T$. Hence the first part holds. For the next part note that it is well-known that $Nil^*(T)\subseteq J(T)$, and $Nil^*(T)$ is the intersection of all strongly prime ideals of $T$, see \cite[Proposition 2.6.7]{rvn}. Now since each strongly prime ideal of $T$ is an intersection of a family of maximal left/right ideals of $T$, we deduce that $J(T)\subseteq Nil^*(T)$ and therefore the equality holds. Now assume that $T$ has no maximal subring, thus $T/Q$ has no maximal subring too and therefore by $(1)$ of Proposition \ref{t20}, we conclude that $T/Q$ is a Hilbert integral domain. Now note that by the first part of the proof $T/Q$ is integral over the field $E$, hence $T/Q$ is a field. The final part is evident by Proposition \ref{t24a1}.
\end{proof}

Now we have the following main result for reduced rings.

\begin{thm}\label{t36}
Let $T$ be a reduced ring which is integral over its center. If $C(T)$ is a Hilbert ring (in particular, if $C(T)$ is a zero-dimensional ring), then either $T$ has a maximal subring or $T$ is a commutative Hilbert (von Neumann regular) ring with $|T|\leq 2^{2^{\aleph_0}}$.
\end{thm}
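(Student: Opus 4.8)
The plan is to suppose that $T$ has no maximal subring and to deduce that $T$ is a commutative Hilbert ring with $|T|\le 2^{2^{\aleph_0}}$ (and von Neumann regular in the parenthetical zero-dimensional case). Write $C:=C(T)$. First I would record the easy preliminaries: since $T$ is reduced, so is its subring $C$, and a reduced commutative Hilbert ring satisfies $J(C)=N(C)=0$. Also, $T$ being reduced is in particular semiprime, so $Nil_*(T)=0$; recall moreover that a reduced ring is a subdirect product of domains, equivalently every minimal prime ideal of a reduced ring is completely prime. Hence the intersection of all completely prime ideals of $T$ is $0$.

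The key step is to show $J(T)=0$. Here I would invoke Corollary \ref{t34}: since $T$ is integral over its center, $C$ is a Hilbert ring, and $T$ has no maximal subring, for every completely prime ideal $Q$ of $T$ the ring $T/Q$ is a Hilbert integral domain with $J(T/Q)=0$; in particular $Q$ equals an intersection of a family of maximal ideals of $T$ containing it. On the other hand, since $T$ has no maximal subring, the comments after Theorem \ref{t1} give $Max(T)=Max_l(T)=Max_r(T)$ and $J(T)=\bigcap_{M\in Max(T)}M$; therefore $J(T)\subseteq Q$ for every completely prime ideal $Q$ of $T$, so $J(T)$ is contained in the intersection of all completely prime ideals of $T$, which is $0$. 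Thus $J(T)=0$.

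The remaining assertions then follow quickly. Since $T$ is integral over its center and $J(T)=0$, Theorem \ref{t19} yields that $T$ is commutative; being commutative with no maximal subring, $T$ is a Hilbert ring by \cite[Corollary 3.5]{azkrmc}. If in addition $C(T)$ is zero-dimensional, then $C(T)=T$ (as $T$ is now commutative) is a reduced zero-dimensional commutative ring, hence von Neumann regular. Finally, $T$ is reduced, so by \cite[Corollary 3.7]{azkrmc} it cannot have cardinality exceeding $2^{2^{\aleph_0}}$; that is, $|T|\le 2^{2^{\aleph_0}}$.

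The step that I expect to need the most care is the passage to completely prime ideals: one must check that it is exactly Corollary \ref{t34} (available precisely because $C(T)$ is Hilbert) that forces $J(T/Q)=0$ for every completely prime $Q$, and that the ``reduced'' hypothesis, through the subdirect-product-of-domains description, is what guarantees that these $Q$'s intersect in $0$ — so that $J(T)=0$ genuinely drops out. Once $J(T)=0$ is in hand, commutativity comes from Theorem \ref{t19} and the rest is bookkeeping with the cited cardinality and Hilbert-ring facts.
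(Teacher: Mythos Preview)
Your proposal is correct and follows essentially the same route as the paper: both arguments use that minimal primes of a reduced ring are completely prime, invoke Corollary~\ref{t34} to get $J(T/Q)=0$ for every completely prime $Q$, deduce $J(T)=0$, and then apply Theorem~\ref{t19} to obtain commutativity. The only differences are cosmetic endgame bookkeeping: the paper extracts the Hilbert property and the cardinality bound from Proposition~\ref{t20}(1), and handles the zero-dimensional case via Proposition~\ref{t35}, whereas you cite \cite[Corollary~3.5]{azkrmc} and \cite[Corollary~3.7]{azkrmc} directly and use the cleaner observation that, once $T$ is commutative, $T=C(T)$ is itself reduced zero-dimensional and hence von Neumann regular.
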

\begin{proof}
First note that if $P$ is a minimal prime ideal of $T$, then $T/P$ is a domain by \cite[Lemma 12.6]{lam}, i.e., $P$ is a completely prime ideal. Now assume that $T$ has no maximal subring and $P$ be any minimal prime ideal of $T$. Since $T$ has no maximal subring, we conclude that $T/P$ is a Hilbert integral domain by Proposition \ref{t34}. This immediately implies that $T$ is a Hilbert ring, for each prime ideal of $T$ contains a minimal prime ideal of $T$. In other words, for each prime ideal $P$ of $T$, $T/P$ is a Hilbert integral domain and therefore $J(T/P)=0$, i.e., $P$ is an intersection of a family of maximal (left/right) ideals of $T$. Thus $J(T)=Nil^*(T)=0$ and therefore by Theorem \ref{t19}, $T$ is a commutative ring. Hence by $(1)$ of Theorem \ref{t20}, we deduce that $|T|\leq 2^{2^{\aleph_0}}$. Now, if $C(T)$ is a zero-dimensional ring, then since each completely prime ideal is strongly prime ideal, thus by Proposition \ref{t35}, we conclude that each minimal prime ideal of $T$ is maximal and therefore $T$ is a zero-dimensional commutative ring. Hence $T$ is a von Neumann regular ring, for $T$ is a reduced ring.
\end{proof}

\begin{rem}
Let $T$ be a reduce ring which is integral over its center. If $C(T)$ is a zero-dimensional ring, then $T$ is a a strongly von Neumann regular ring (and therefore is a duo ring). To see this note that, since $T$ is a reduced ring, we must show that $T$ is a von Neumann regular ring. Hence by \cite[Theorem 1.21]{vnrg}, it suffices to show that for each completely prime ideal $P$ of $T$, the ring $T/P$ is a von Neumann regular. Let $P$ be a completely prime ideal of $T$, i.e., $T/P$ is a domain. It is not hard to see that $P\cap C(T)$ is a prime ideal of $C(T)$, and therefore by the assumption is a maximal ideal of $C(T)$. Therefore $C(T)/(C(T)\cap P)$ is a field. Hence $E:=(C(T)+P)/P\cong C(T)/(C(T)\cap P)$ is a field. Now note that by the assumption the domain $D:=T/P$ is integral over $E\subseteq C(D)$. Similar to the proof of commutative rings, this immediately implies that $D$ is a division ring and therefore is a von Neumann regular ring. Hence we are done.
\end{rem}

\begin{cor}\label{t38}
Let $T$ be a left Noetherian reduced ring which is integral over its center. If $C(T)$ is a Hilbert ring (in particular, if $C(T)$ is a zero-dimensional ring), then either $T$ has a maximal subring or $T$ is a countable commutative Noetherian (Artinian) ring. In particular, if $T$ is a left noetherian reduced ring which is integral over its center, $C(T)$ is a Hilbert ring (in particular, whenever $C(T)$ is a zero-dimensional ring) and $T$ is not right noetherian, then $T$ has a maximal subring.
\end{cor}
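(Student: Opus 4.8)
The plan is to obtain the statement by feeding the hypotheses into Theorem \ref{t36} and then reading off the remaining finiteness conclusions from standard facts about commutative rings. So I would begin by supposing that $T$ has no maximal subring. The hypotheses of Theorem \ref{t36} are exactly met: $T$ is reduced, integral over its center, and $C(T)$ is a Hilbert ring (and in the parenthetical case $C(T)$ is zero-dimensional, which is indeed a special case, since every zero-dimensional ring is Hilbert---each of its prime ideals is already maximal). Hence Theorem \ref{t36} gives that $T$ is a commutative Hilbert ring with $|T|\le 2^{2^{\aleph_0}}$, and moreover $T$ is von Neumann regular when $C(T)$ is zero-dimensional.

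Next I would upgrade the word ``commutative'' to the countability and chain-condition assertions. Since $T$ is now known to be commutative and is left Noetherian, $T$ is a Noetherian commutative ring. As $T$ has no maximal subring, the commutative Noetherian case of \cite[Corollary 2.4]{azn} forces $T$ to be countable; thus $T$ is a countable commutative Noetherian ring, which is the desired alternative in the Hilbert case. In the parenthetical case where $C(T)$ is zero-dimensional, $T$ is in addition von Neumann regular; a von Neumann regular Noetherian ring is semisimple, so in the commutative case $T$ is a finite direct product of fields, in particular Artinian. Hence in that case $T$ is a countable commutative Artinian ring.

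For the concluding ``in particular'' I would argue by contraposition: if $T$ had no maximal subring, then by what was just proved $T$ would be commutative, and a commutative left Noetherian ring is automatically right Noetherian, contradicting the hypothesis that $T$ is not right Noetherian. Therefore such a $T$ must possess a maximal subring.

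I do not anticipate a genuine obstacle here, since the argument is essentially bookkeeping on top of Theorem \ref{t36}; the only points that deserve a sentence of justification are that a zero-dimensional ring is Hilbert (so that the parenthetical hypothesis really is a special case) and the standard commutative-algebra fact that a von Neumann regular Noetherian ring is a finite product of fields, hence Artinian.
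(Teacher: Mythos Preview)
Your proposal is correct and follows essentially the same route as the paper: invoke Theorem \ref{t36} to get commutativity (and von Neumann regularity in the zero-dimensional case), then apply \cite[Corollary 2.4]{azn} for countability, and deduce Artinianity from Noetherian plus zero-dimensionality/von Neumann regularity. Your contrapositive for the final clause is exactly what the paper means by ``the final part is evident.''
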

\begin{proof}
If $T$ has no maximal subring, then by the previous theorem, $T$ is a commutative ring. Therefore by \cite[Corollary 2.4]{azn}, $T$ is a countable ring. If $C(T)$ is a zero-dimensional ring, then by the previous theorem $R$ is a commutative zero-dimensional ring and therefore $T$ is an Artinian ring. The final part is evident.
\end{proof}

\begin{rem}\label{t39}
Let $T$ be a ring which is integral over its center. If $T$ has no maximal subring and $T$ is Hilbert, then $C(T)$ is Hilbert too. To see this, first note that for each maximal ideal $M$ of $T$, the field $T/M$ is an absolutely algebraic by $(2)$ of Proposition \ref{t20}. Thus each subring of $T/M$ is a field. Hence for each subring $R$ of $T$, we conclude that $R/(R\cap M)$ is an absolutely algebraic field, for $(R+M)/M$ is a subfield of $T/M$. Now, if $P$ is a prime ideal of $C(T)$, then by \cite[Proposition 1.2]{blair}, $T$ has a prime ideal $Q$ such that $Q\cap C(T)=P$. Since $T$ is a Hilbert ring, there exist maximal ideals $M_{\alpha}$, $\alpha\in \Gamma$, such that $Q=\bigcap_{\alpha\in\Gamma} M_{\alpha}$. Therefore $P=Q\cap C(T)=\bigcap_{\alpha\in\Gamma} (M_{\alpha}\cap C(T))$, and by the first part for each $\alpha\in\Gamma$, $M_{\alpha}\cap C(T)$ is a maximal ideal of $C(T)$. Therefore $C(T)$ is a Hilbert ring.
\end{rem}

If $T$ is a commutative ring which has no maximal subring, then for each subring $R$ of $T$, by \cite[Theorem 2.4]{azconch}, $U(R)=U(T)\cap R$; and by \cite[Proposition 2.3]{azalicid}, $J(R)=J(T)\cap R$. Now we have the following result.

\begin{prop}\label{t40}
Let $T$ be a ring which is integral over its center and $R$ be a subring of $T$ with $J(T)\cap R\subseteq R$. Then either $T$ has a maximal subring or $J(R)=J(T)\cap R$ and $U(R)=U(T)\cap R$.
\end{prop}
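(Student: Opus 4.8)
The plan is to assume $T$ has no maximal subring and to establish the two displayed equalities. The equality $U(R)=U(T)\cap R$ is immediate: since $T$ has no maximal subring, by \cite[Corollary 2.5]{azconch} (item $(6)$ of the list in the Introduction) every subring $R$ of $T$ satisfies $U(R)=U(T)\cap R$. So the real content is the equality $J(R)=J(T)\cap R$. The hypothesis $J(T)\cap R\subseteq J(R)$ already supplies one inclusion, hence it remains to prove $J(R)\subseteq J(T)\cap R$, and since trivially $J(R)\subseteq R$, this amounts to showing $J(R)\subseteq J(T)$.

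For this I would invoke Proposition \ref{t20}. Because $T$ is integral over its center and has no maximal subring, $T$ is a quasi duo ring with $Max(T)=Max_l(T)=Max_r(T)$, so by the comments following Theorem \ref{t1} we have $J(T)=\bigcap_{M\in Max(T)}M$, and by part $(2)$ of Proposition \ref{t20} the residue field $T/M$ is absolutely algebraic for every $M\in Max(T)$. Fix such an $M$. Then $(R+M)/M$ is a nonzero unital subring of $T/M$, hence an integral domain which is algebraic over its prime subfield $\mathbb{Z}_p$, where $p=Char(T/M)$; a commutative integral domain that is algebraic over a subfield is itself a field, so $(R+M)/M$ is a field. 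Via the isomorphism $R/(R\cap M)\cong (R+M)/M$ this says $R\cap M$ is a maximal ideal of $R$, whence $J(R)\subseteq R\cap M\subseteq M$.

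Intersecting over all $M\in Max(T)$ then yields $J(R)\subseteq\bigcap_{M\in Max(T)}M=J(T)$, so $J(R)\subseteq J(T)\cap R$; combined with the hypothesis this gives $J(R)=J(T)\cap R$, and together with $U(R)=U(T)\cap R$ the proof is complete. I do not expect a serious obstacle here: the one point needing care is the observation that every subring of the absolutely algebraic field $T/M$ is again a field (hence has vanishing Jacobson radical and makes $R\cap M$ a maximal ideal of $R$), and this is precisely what Proposition \ref{t20}$(2)$ provides; the rest is routine bookkeeping with the identity $J(T)=\bigcap Max(T)$ valid for rings without maximal subrings.
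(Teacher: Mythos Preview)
Your argument for $J(R)=J(T)\cap R$ is correct and matches the paper's: both pass through the observation that $R\cap M$ is a maximal ideal of $R$ for every $M\in Max(T)$ (the paper cites Corollary \ref{t22}, you reproduce its proof via Proposition \ref{t20}(2)), whence $J(R)\subseteq J(T)\cap R$; the hypothesis then closes the equality.

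The gap is in your treatment of $U(R)=U(T)\cap R$. Item (6) of the Introduction, i.e.\ \cite[Corollary 2.5 and Theorem 2.4]{azconch}, is a \emph{commutative} result (note the sentence immediately preceding Proposition \ref{t40}: ``If $T$ is a \emph{commutative} ring which has no maximal subring, then for each subring $R$ of $T$, by \cite[Theorem 2.4]{azconch}, $U(R)=U(T)\cap R$\ldots''). You invoke it directly for the possibly non-commutative ring $T$, which is not justified; indeed, the only non-commutative variant established in the paper (Proposition \ref{t24a1}) requires the extra hypothesis that $J(T)$ be nil, which is absent here.

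The paper's route is precisely designed to avoid this: it applies \cite[Corollary 2.5]{azconch} only to the \emph{commutative} ring $T/J(T)$ (commutativity coming from Proposition \ref{t20}(1)), obtaining that $x^{-1}+J(T)$ satisfies a monic relation over the prime subring; multiplying by $x^n$ produces an element of $J(T)\cap R$, which by the already-proved first equality lies in $J(R)$, and this forces $x\in U(R)$. So the $U$-part is not immediate---it genuinely relies on the $J$-part and on descending to $T/J(T)$ before citing the commutative result.
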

\begin{proof}
Assume that $T$ has no maximal subring, thus by Corollary \ref{t22}, for each maximal (left) ideal $M$ of $T$, we conclude that $R\cap M$ is a maximal (left) ideal of $R$ and therefore $J(R)\subseteq J(T)\cap R$. Hence by the assumption of the theorem we conclude that $J(R)=J(T)\cap R$. It is clear that $U(R)\subseteq U(T)\cap R$. Let $x\in U(T)\cap R$, the $x^{-1}+J(T)$ is a unit in $T/J(T)$. Since $T$ has no maximal subring, by $(1)$ of Proposition \ref{t20}, we deduce that $T/J(T)$ is a commutative ring and clearly $T/J(T)$ has no maximal subring too. Thus by \cite[Corollary 2.5]{azconch}, the units group of the ring $T/J(T)$ is integral over the prime subring of $T/J(T)$. Therefore, there exist $n\in\mathbb{N}$ and $a_0,\ldots,a_{n-1}$ in the prime subring of $T$ such that
$$(x^{-1}+J(T))^n+a_{n-1}(x^{-1}+J(T))^{n-1}+\cdots+a_1(x^{-1}+J(T))+a_0=0\ \text{in the ring}\ T/J(T).$$
Hence $(x^{-1})^n+a_{n-1}(x^{-1})^{n-1}+\cdots+a_1(x^{-1})+a_0\in J(T)$. Multiplying by $x^n$, we obtain that $1+a_{n-1}x+\cdots+a_1x^{n-1}+a_nx^n\in J(T)$. Now note that $x\in R$ and since $R$ is a subring of $T$, we deduce that $R$ contains the prime subring of $T$, i.e., $a_i\in R$. Thus $1+a_{n-1}x+\cdots+a_1x^{n-1}+a_nx^n\in J(T)\cap R$. Hence by the first part, we conclude that $1+a_{n-1}x+\cdots+a_1x^{n-1}+a_nx^n\in J(R)$. Therefore $a_{n-1}x+\cdots+a_1x^{n-1}+a_nx^n\in U(R)$ which immediately implies that $x\in U(R)$. Hence the equality holds.
\end{proof}

\begin{prop}\label{t40a}
Suppose for each maximal ideal $M$ of a ring $T$ there exists a prime number $p$ such that $C(T/M)$ is algebraic over $\mathbb{F}_p$. Then either $T$ has a maximal subring or $J(C(T))=C(T)\cap J(T)$.
\end{prop}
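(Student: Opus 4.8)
The plan is to prove the two inclusions $C(T)\cap J(T)\subseteq J(C(T))$ and $J(C(T))\subseteq C(T)\cap J(T)$ separately, the first for an arbitrary ring and the second under the hypothesis that $T$ has no maximal subring. The first inclusion is routine: if $a\in C(T)\cap J(T)$ and $c\in C(T)$, then $ca\in J(T)$, so $1-ca\in U(T)$; since $1-ca$ is a central unit, its inverse is again central (for $u\in C(T)\cap U(T)$ one has $u^{-1}t=u^{-1}tuu^{-1}=u^{-1}utu^{-1}=tu^{-1}$ for all $t$), hence $1-ca\in U(C(T))$, and as $c$ was arbitrary, $a\in J(C(T))$. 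So everything hinges on the reverse inclusion, and from now on I would assume $T$ has no maximal subring.

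Under that assumption, by $(1)$ of Theorem \ref{t1} and the comments following it, $T$ is quasi duo with $Max(T)=Max_l(T)=Max_r(T)$, $J(T)=\bigcap_{M\in Max(T)}M$, and $T/M$ a division ring for every $M\in Max(T)$. In particular $C(T/M)$ is a field, and by hypothesis it is algebraic over $\mathbb F_p$ for some prime $p=p(M)$ (which forces $\mathrm{char}(T/M)=p$). The key step is then the following: fix $a\in J(C(T))$ and suppose $a\notin M$ for some $M\in Max(T)$. The image $\bar a$ of $a$ in $T/M$ is a nonzero element of the field $C(T/M)$, hence lies in a finite subfield $\mathbb F_{p^k}=\mathbb F_p[\bar a]$, so $\bar a^{\,m}=1$ with $m:=p^k-1\ge 1$, i.e. $a(1-a^m)=a-a^{m+1}\in M$. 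Since $a^m\in J(C(T))$, the element $1-a^m$ is a unit of $C(T)$, hence a central unit of $T$ with central inverse; multiplying $a(1-a^m)\in M$ on the right by $(1-a^m)^{-1}$ gives $a\in M$, a contradiction. Thus $a\in M$ for all $M\in Max(T)$, so $a\in J(T)$, which together with the first inclusion yields $J(C(T))=C(T)\cap J(T)$.

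The proof is short; the points I would be careful about are (i) invoking the right consequences of ``$T$ has no maximal subring'', namely that each $T/M$ is a division ring (so that $C(T/M)$ is genuinely a field) and that $J(T)$ equals the intersection of the maximal two-sided ideals, and (ii) observing that the prime $p$ may depend on $M$, which is harmless because the root-of-unity computation is carried out at one maximal ideal at a time. The only real (and still minor) obstacle I anticipate is justifying cleanly that a nonzero element of a field algebraic over $\mathbb F_p$ sits inside a finite field and is therefore a root of unity, so that the identity $\bar a^{\,m+1}=\bar a$ is legitimate.
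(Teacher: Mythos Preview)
Your proof is correct. Both you and the paper handle the inclusion $J(T)\cap C(T)\subseteq J(C(T))$ via the observation that central units have central inverses (the paper phrases this as $U(C)=C\cap U(T)$), and both assume $T$ has no maximal subring to obtain the quasi-duo structure for the reverse inclusion. The difference lies in how $J(C(T))\subseteq M$ is established for each $M\in Max(T)$. The paper argues structurally: $C(T)/(C(T)\cap M)\cong (C(T)+M)/M$ embeds in the field $C(T/M)$, which is algebraic over $\mathbb{F}_p$, so this subring is itself a field; hence $C(T)\cap M$ is a maximal ideal of $C(T)$ and automatically contains $J(C(T))$. You instead argue element-wise: given $a\in J(C(T))$ with $a\notin M$, you extract a root-of-unity relation $\bar a^{\,m+1}=\bar a$ in $T/M$ and cancel the central unit $1-a^m$ to force $a\in M$. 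Your route makes explicit the periodicity fact about absolutely algebraic fields that the paper leaves implicit, at the cost of a small extra computation; the paper's version is slightly slicker because maximality of $C(T)\cap M$ delivers the containment for all of $J(C(T))$ at once rather than one element at a time.
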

\begin{proof}
Let $C:=C(T)$, it is clear that $U(C)=C\cap U(T)$ and therefore $J(T)\cap C\subseteq J(C)$. Now assume that $T$ has no maximal subring. Therefore by $(1)$ of Theorem \ref{t1}, $T$ is a quasi duo ring. Now for each maximal (left/right) ideal $M$ of $T$, we have $C/(C\cap M)\cong (C+M)/M\leq C(T/M)$. Therefore $C\cap M$ is a maximal ideal of $C$ and thus $J(C)\subseteq M$. Hence $J(C)\subseteq J(T)$, which immediately implies that $J(C)\subseteq C\cap J(T)$ and therefore the equality holds.
\end{proof}

Finally in this paper we have two results about maximal subring in infinite direct product of rings.

\begin{thm}\label{t41}
Let $\{T_i\}_{i\in I}$ be a family of rings, $I$ is infinite and $T=\prod_{i\in I} T_i$. Then the following hold:
\begin{enumerate}
\item If each $T_i$ is integral over its center, then $T$ has a maximal subring.
\item If $T$ is integral over its center, then $T$ has a maximal subring.
\end{enumerate}
\end{thm}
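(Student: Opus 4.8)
The plan is to deduce both statements from the commutative case, namely \cite[Theorem 3.17]{azkrmc} (item $(4)$ of the Introduction), which asserts that every infinite direct product of commutative rings has a maximal subring, together with Theorem \ref{t19}.

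I would first note that $(2)$ is a special case of $(1)$: if $T=\prod_{i\in I}T_i$ is integral over its center, then since $C(T)=\prod_{i\in I}C(T_i)$, projecting integrality relations coordinatewise shows that each $T_i$ is integral over $C(T_i)$, so $(1)$ applies. Thus it suffices to prove $(1)$.

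For $(1)$ I would split into two cases. If some $T_{i_0}$ has a maximal subring $S_{i_0}$, then $S:=S_{i_0}\times\prod_{i\neq i_0}T_i$ is a maximal subring of $T$: it is a proper subring containing the two-sided ideal $\{0\}\times\prod_{i\neq i_0}T_i$, so any subring $R$ with $S\subseteq R\subseteq T$ has the form $R=\pi_{i_0}(R)\times\prod_{i\neq i_0}T_i$ with $S_{i_0}\subseteq\pi_{i_0}(R)\subseteq T_{i_0}$ a subring, whence maximality of $S_{i_0}$ forces $R=S$ or $R=T$. In the remaining case no $T_i$ has a maximal subring. Then for each $i$ the ring $T_i/J(T_i)$ is integral over its center (reduce the integrality relations modulo $J(T_i)$; the image of $C(T_i)$ is a central subring), satisfies $J(T_i/J(T_i))=0$, and has no maximal subring (one would pull back to a maximal subring of $T_i$); hence by Theorem \ref{t19} each $T_i/J(T_i)$ is commutative. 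Since $J(T)=\prod_{i\in I}J(T_i)$, we get $T/J(T)\cong\prod_{i\in I}\bigl(T_i/J(T_i)\bigr)$, an infinite direct product of commutative rings, which has a maximal subring $\overline{S}$ by \cite[Theorem 3.17]{azkrmc}. Its preimage under the quotient map $T\to T/J(T)$ contains $J(T)$, so subrings of $T$ above it correspond bijectively to subrings of $T/J(T)$ above $\overline{S}$, and it is therefore a maximal subring of $T$.

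All the individual steps are routine transfers of maximal subrings along surjections and along product projections, and I do not expect a genuine obstacle. The only slightly delicate point is the passage, in the last case, from ``no $T_i$ has a maximal subring'' to ``each $T_i/J(T_i)$ is commutative'', which is precisely where Theorem \ref{t19}, and hence the earlier work on rings integral over their centers, is used; the real content of the theorem is thus the already-available commutative result \cite[Theorem 3.17]{azkrmc}.
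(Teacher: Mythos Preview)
Your proof is correct and follows essentially the same route as the paper's: the same reduction of $(2)$ to $(1)$ via $C(T)=\prod_i C(T_i)$, the same case split in $(1)$, and the same appeal to \cite[Theorem 3.17]{azkrc} for the infinite product of the commutative quotients $T_i/J(T_i)$. The only cosmetic difference is that the paper cites Proposition~\ref{t20}(1) to get commutativity of $T_i/J(T_i)$, whereas you invoke Theorem~\ref{t19} directly on $T_i/J(T_i)$; these are equivalent, since Proposition~\ref{t20}(1) is obtained precisely by applying Theorem~\ref{t19} to $T/J(T)$.
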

\begin{proof}
$(1)$ If for some $i\in I$, the ring $T_i$ has a maximal subring, then clearly $T$ has a maximal subring. Thus assume that for each $i\in I$, $T_i$ has no maximal subring. Therefore by $(1)$ of Proposition \ref{t20}, for each $i\in I$, the ring $T/J(T_i)$ is a commutative ring. Take $J:=\prod_{i\in I} J(T_i)$, then clearly $T/J\cong \prod_{i\in I} T/J(T_i)$. Thus $T/J$ is a product of an infinite family of commutative rings, and therefore by \cite[Theorem 3.17]{azkrc}, $T/J$ has a maximal subring and so does $T$. Hence $(1)$ holds. For $(2)$, it suffices to show that each $T_i$ is integral over its center and use $(1)$. Clearly, $C_T(T)=\prod_{i\in I}C_{T_i}(T_i)$. Let $k\in I$ and $t\in T_k$, and put $x=(x_i)_{i\in I}$ where $x_i=0$ for $i\neq k$ and $x_k=t$. By our assumption, $x$ is integral over the center of $T$. Hence there exist $n\in\mathbb{N}$ and $c_0, c_1,\ldots, c_{n-1}$ in $C_T(T)$ such that $x^n+c_{n-1}x^{n-1}+\cdots+c_1x+c_0=0$. Since $C_T(T)=\prod_{i\in I}C_{T_i}(T_i)$, we have $c_j=(c_{ji})_{i\in I}$, for $0\leq j\leq n-1$, where $c_{ji}\in C_{T_i}(T_i)$. Therefore by calculating the $k$-th component from $x^n+c_{n-1}x^{n-1}+\cdots+c_1x+c_0=0$, we immediately conclude that $t^n+c_{n-1k}t^{n-1}+\cdots+c_{1k}t+c_{0k}=0$. Hence $t$ is integral over $C_{T_i}(T_i)$ and we are done by $(1)$.
\end{proof}

We conclude this paper by our final result about the structure of maximal subrings in infinite direct product of rings.

\begin{prop}\label{t42}
Let $\{T_i\}_{i\in I}$ be a family of rings, $I\neq \emptyset$ and $T=\prod_{i\in I} T_i$. If $R$ is a maximal subring of $T$, then one of the following holds:
\begin{enumerate}
\item $R=\prod_{i\in I} R_i$, where there exists a unique $j\in I$ such that $R_j$ is a maximal subring of $T_j$ and for each $i\in I\setminus\{j\}$, we have $R_i=T_i$.
\item There exists a family $\{A_i\}_{i\in I}$ of ideals of $R$ such that $\bigcap_{i\in I}A_i=0$ and $T_i\cong R/A_i$, for each $i\in I$.
\end{enumerate}
\end{prop}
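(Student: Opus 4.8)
The plan is to study the maximal subring $R$ through the canonical projections $\pi_i\colon T\to T_i$ and to split into two cases according to whether $\pi_i(R)=T_i$ for every $i\in I$. First I would record a few facts that hold for an arbitrary unital subring $R\subseteq T$ and each $i$: the image $\pi_i(R)$ is a unital subring of $T_i$ (it contains $\pi_i(1_T)=1_{T_i}$); the set $A_i:=\ker(\pi_i|_R)=R\cap\ker\pi_i$ is a two-sided ideal of $R$ with $R/A_i\cong\pi_i(R)$; and $\bigcap_{i\in I}\ker\pi_i=\{0\}$ in $T$, hence $\bigcap_{i\in I}A_i=\{0\}$ in $R$, for any index set $I$.

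Suppose first that $\pi_i(R)=T_i$ for all $i\in I$. Then each $\pi_i|_R\colon R\to T_i$ is onto with kernel $A_i$, so $R/A_i\cong T_i$, and since $\bigcap_{i\in I}A_i=\{0\}$ this is precisely conclusion~(2). (This case uses nothing about maximality of $R$, only that all projections of $R$ are surjective.)

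Now suppose $R_j:=\pi_j(R)\subsetneq T_j$ for some $j\in I$. The key step is to squeeze the subring $S:=\pi_j^{-1}(R_j)=\{t\in T:\pi_j(t)\in R_j\}$ between $R$ and $T$: it is a unital subring of $T$, it contains $R$ because $\pi_j(r)\in R_j$ for every $r\in R$, and it is proper because any lift of an element of $T_j\setminus R_j$ avoids it. Maximality of $R$ then forces $R=S$, i.e. $R=\prod_{i\in I}R_i$ with $R_i=T_i$ for $i\neq j$; in particular $\pi_i(R)=T_i$ for all $i\neq j$, which makes $j$ the unique index for which $\pi_j(R)\neq T_j$. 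It remains to upgrade ``$R_j$ is a proper subring of $T_j$'' to ``$R_j$ is a maximal subring of $T_j$'': for any subring $R_j\subseteq R_j'\subseteq T_j$, the preimage $\pi_j^{-1}(R_j')$ lies between $R$ and $T$, and it equals $R$ exactly when $R_j'=R_j$ and equals $T$ exactly when $R_j'=T_j$, so maximality of $R$ leaves only these two options. This gives conclusion~(1).

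Once the case split is in place the argument is essentially bookkeeping, so I do not anticipate a substantive obstacle; the steps that need a bit of care are verifying that $S$ and the preimages $\pi_j^{-1}(R_j')$ are honest intermediate unital subrings (so that the maximality of $R$ can actually be invoked), checking that the two cases are exhaustive, and, in the second case, passing from ``proper'' to ``maximal'' for $R_j$. The infinitude of $I$ is not used at any point — the argument works for every nonempty $I$, as the hypothesis $I\neq\emptyset$ already suggests.
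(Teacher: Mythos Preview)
Your proof is correct and follows essentially the same strategy as the paper. The paper phrases the dichotomy dually, in terms of the kernel ideals $B_j:=\ker\pi_j=\prod_{i\neq j}T_i\times\{0\}$: either some $B_j\subseteq R$ (giving (1) via the correspondence $R/B_j\hookrightarrow T/B_j\cong T_j$), or every $B_j\not\subseteq R$, in which case maximality forces $R+B_j=T$ and the second isomorphism theorem yields $R/(R\cap B_j)\cong T/B_j\cong T_j$, i.e.\ (2) with $A_j=R\cap B_j$. Your condition ``$\pi_j(R)\subsetneq T_j$'' is equivalent (after invoking maximality) to the paper's ``$B_j\subseteq R$'', and your $A_i=R\cap\ker\pi_i$ is exactly the paper's $A_i=R\cap B_i$; the only real difference is that you spell out the maximality and uniqueness of $R_j$ in case~(1), which the paper leaves as ``clearly (1) holds''.
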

\begin{proof}
For each $j\in I$, let $B_j=\prod_{i\in I} K_i$, where $K_i=T_i$ for $i\in I\setminus\{j\}$ and $K_j=\{0\}$. It is clear that $B_i$ is an ideal of $T$ and $T/B_i\cong T_i$. Now if there exists $j\in I$, such that $B_j\subseteq R$, then clearly $(1)$ holds. Hence assume that for each $i\in I$, $B_i$ is not contained in $R$. Then by maximality of $R$, we conclude that $R+B_i=T$ and therefore $R/(R\cap B_i)\cong (R+B_i)/B_i=T/B_i\cong T_i$. Hence if we put $A_i:=R\cap B_i$, the we conclude that for each $i\in I$ we have $R/A_i\cong T_i$. Finally note that $\bigcap_{i\in I}B_i=0$ and therefore $\bigcap_{i\in I}A_i=0$. Thus $(2)$ holds.
\end{proof}

\vspace{0.5cm}
\centerline{\Large{\bf Acknowledgement}}
The author is grateful to the Research Council of Shahid Chamran University of Ahvaz (Ahvaz-Iran) for
financial support (Grant Number: SCU.MM1403.721)
\vspace{0.5cm}


\end{document}